\numberwithin{equation}{section}
\numberwithin{figure}{section}
\numberwithin{table}{section}
\newtheorem{theorem}{Theorem}[section]
\newtheorem{corollary}[theorem]{Corollary}
\newtheorem{lemma}[theorem]{Lemma}
\newtheorem{defn}[theorem]{Definition}
\newtheorem{proposition}[theorem]{Proposition}
\newtheorem{remark}[theorem]{Remark}
\numberwithin{equation}{section}
\newcommand{\veps}{\varepsilon}
\newcommand{\EE}{\mathbb{E}}
\newcommand{\RR}{\mathbb{R}}
\newcommand{\ZZ}{\mathbb{Z}}
\newcommand{\NN}{\mathbb{N}}
\newcommand{\1}{\mathbf{1}}
\newcommand{\cli}{\mathcal{I}}
\newcommand{\om}{\omega}
\definecolor{expcol}{rgb}{1.0,0.5,0.0}
\definecolor{ecol}{rgb}{0.0, 0.0, 1.0}
\begin{document}

\title{Cyclical Long Memory:\\  Decoupling, Modulation, and Modeling}
\author[1]{Stefanos Kechagias}
\author[2]{Vladas Pipiras}
\author[2]{Pavlos Zoubouloglou\footnote{Corresponding author, e-mail: pavlos@ad.unc.edu.}}
\affil[1]{Department of Statistics, AUEB}
\affil[2]{Department of Statistics and Operations Research, UNC Chapel Hill}

\maketitle

\begin{abstract}
A new model for general cyclical long memory is introduced, by means of random modulation of certain bivariate long memory time series. This construction essentially decouples the two key features of cyclical long memory: quasi-periodicity and long-term persistence. It further allows for a general cyclical phase in cyclical long memory time series. Several choices for suitable bivariate long memory series are discussed, including a parametric fractionally integrated vector ARMA model. The parametric models introduced in this work have explicit autocovariance functions that can be readily used in simulation, estimation, and other tasks.\\

\noindent{\textbf{AMS 2020 subject classifications:}} 60G12, 60G35, 94A14, 62M10, 00A71. \\

\noindent {\bf Keywords:} cyclical long memory, bivariate series, random modulation, time series, long memory, quasi-periodicity.
 \end{abstract}

\section{Introduction} \label{sec:intro}

The main goal of this work is to shed further light on the phenomenon of the so-called cyclical long memory concerning stationary time series. Let $X = \{X_n\}_{n \in \ZZ}$ be a second-order stationary time series with zero mean $\EE X_n = 0$ for simplicity and autocovariance function (ACVF) $\gamma_X(h) = \EE X_h X_0,\; h \in \ZZ$. If it exists, denote the spectral density of $X$ by $f_X(\lambda), \lambda \in (-\pi,\pi)$. Under mild assumptions (and by the chosen convention), one expects that $ f_X(\lambda) = (2\pi)^{-1} \sum_{h=-\infty}^\infty e^{-ih\lambda} \gamma_X (\lambda),\; \lambda \in (-\pi,\pi).$ By symmetry, it is enough to focus on $f_X(\lambda), \;\lambda \in (0,\pi)$.

The time series $X$ exhibits \textit{Cyclical Long Memory} (CLM, for short) if
\begin{equation}
\label{eq:cycl-ACVF}
\begin{split}
    \gamma_X(h) &\simeq c_\gamma \cos(\lambda_0 h + \phi) h^{2d-1} \\
    &= c_{\gamma,1} \cos(\lambda_0 h) h^{2d-1} + c_{\gamma,2} \sin(\lambda_0 h) h^{2d-1}, \quad \text{as} \; h \to \infty,
\end{split}
\end{equation}
where $d \in (0,1/2)$, $\lambda_0 \in (0,\pi), c_\gamma>0,c_{\gamma,1} > 0$, $c_{\gamma,2} \in \RR$, and $\phi \in (-\frac{\pi}{2},\frac{\pi}{2})$. The notation $\gamma_1(h) \simeq \gamma_2(h) $ in \eqref{eq:cycl-ACVF} and throughout this work will stand for $\gamma_1(h) - \gamma_2(h) = o(h^{2d-1})$, as $h\to\infty$. A formal definition is given and investigated in Appendices \ref{app:sec-defn}, \ref{app:sec-spec-to-time} and \ref{app:sec-time-to-spec}. The parameter $d$ is known as \textit{long memory} parameter, and we will refer to $\lambda_0$ and $\phi$ as \textit{cyclical frequency} and \textit{cyclical phase} parameters. In fact, as shown in Appendix \ref{subsubsec:admis-phase}, the cyclical phase 
\begin{equation} \label{eq:admiss-sets-def}
    \phi \in \cli_d \doteq \left[\left(d-\frac{1}{2}\right)\pi, \left(\frac{1}{2}-d\right)\pi\right],
\end{equation}
which we refer to as the \textit{set of admissible cyclical phase} parameters. The boundary points $\phi = \pm \left( \frac{1}{2} - d \right) \pi$ of the set $\cli_d$ and the value $\phi = 0$ play a special role as indicated below.

In the spectral domain, CLM can be thought of as 
\begin{equation} \label{eq:cyclical-spectrum}
 f_X(\lambda) \sim 
 \begin{cases}
       c_f^+ (\lambda - \lambda_0)^{-2d}, &\quad\text{as}\; \lambda \to \lambda_0^+,\\
       c_f^- (\lambda_0 - \lambda)^{-2d}, &\quad\text{as}\; \lambda \to \lambda_0^-,
 \end{cases}
\end{equation}
where $c_f^+,c_f^- \ge 0$ are constants so that $c_f^+ + c_f^- > 0$. Namely, the spectral density diverges around the cyclical frequency $\lambda_0$. As usual, $\sim$ denotes asymptotic equivalence. Conditions and results for going from \eqref{eq:cyclical-spectrum} to \eqref{eq:cycl-ACVF}, and vice versa, are given in Appendices \ref{app:sec-spec-to-time} and \ref{app:sec-time-to-spec} below, along with relations connecting $c_f^+,c_f^-$ and $c_\gamma,\phi$. These results are of independent interest and, to the best of our knowledge, are new in this area. In particular, we have that
\begin{equation} \label{eq:cycl-phi-0}
\phi = 0 \quad \text{if and only if} \quad c_f^+ = c_f^- \doteq c_f,
\end{equation}
so that \eqref{eq:cyclical-spectrum} becomes $f_Y(\lambda) \sim c_f |\lambda - \lambda_0|^{-2d}, \;\text{as}\; \lambda \to \lambda_0,$ that is, the divergence around $\lambda = \lambda_0$ is symmetric. 
In particular, the boundary points satisfy the relation:
\begin{equation} \label{eq:cycl-phi-boundary}
\phi = \pm \left( \frac{1}{2} - d  \right) \pi \quad \text{if and only if} \quad c_f^{\mp} = 0.
\end{equation}
But we also caution the reader that the ``boundary" case \eqref{eq:cycl-phi-boundary} needs to be treated more carefully, as explained in Appendices \ref{app:subsec-spec-to-time-boundary} and \ref{app:subsec-time-to-spec-boundary}. We note in that regard that, by our convention, for example, $c_f^+ = 0$ in \eqref{eq:cyclical-spectrum} stands for $(\lambda - \lambda_0)^{2d} f_X(\lambda) \to 0$ as $\lambda \to \lambda_0^+$. It subsumes more refined asymptotics as $f_X(\lambda) \sim c(\lambda - \lambda_0)^{-2\delta}$ with $\delta < d$, which need to be taken into account as presented in, e.g., Appendix \ref{app:subsec-spec-to-time-boundary}. The models considered below will satisfy both \eqref{eq:cycl-ACVF} and \eqref{eq:cyclical-spectrum}. When $\lambda_0 = 0$, CLM becomes the usual well-studied Long Memory (LM) (e.g., \cite{BerFenGhoSuc13book,GirKouSur12Book,SamBook16,pipiras_taqqu_2017}), but the focus here is on $\lambda_0 > 0$. CLM is sometimes also called seasonal LM, though some authors use the latter term more specifically for the so-called seasonal FARIMA model.

The origins of CLM go back at least to the celebrated paper by Hosking \cite{Hos81}, who considered fractional differencing in modeling LM, but in the last paragraph of the paper, also discussed briefly a CLM model exhibiting ``both long-term persistence and quasiperiodic behavior." The model became known as the Gegenbauer series and has drawn most of the attention in this area, especially as likely being the most popular parametric model. Various developments and extensions for this model (ARMA counterparts, multiple divergence frequencies, estimation, multivariate constructions, and so on) can be found in, e.g., \cite{Anděl86,GraZhaWoo89,GraZhaWoo94,GirLei95,WooCheGra98,DisPeiPro16,WuPei18,Espleooleand15}. A review can be found in \cite{Dis18}. We note that the corresponding models usually exhibit CLM with cyclical phase $\phi = 0$ only and that their ACVFs are generally challenging to compute \cite{arteche1999,Tuc12}. Other aspects of CLM (limit theorems, semi-parametric estimations, and so on) were considered in \cite{IvaLeo13,Ole13,Oul02,OulPhi11,GiHiRo01,arteche1999,Arteche20002,Arteche20003}. 

Our own interest in CLM stems from its relevance to naval applications \cite{Pipzousap22}. Stationary spatio-temporal models are available in Naval Architecture, Oceanography and related fields for the wave height at given spatial locations and over time. The Longuet-Higgins model is a celebrated example \cite{Lonseldea57} and involves a spectrum function, which can be thought of as the spectrum of the wave height process at a fixed location. The process has short memory. However, if the wave height is measured at a location traveling at a constant speed, the resulting process can be shown to exhibit CLM. This is relevant to ships traveling at constant speed, whose motions then inherit the CLM from that of the associated wave excitation. Aspects of this behavior have been known in Naval Architecture for years (e.g., \cite{DenPie54,Lew89}), well before the paper by Hosking \cite{Hos81}, but connections to CLM have been clarified only recently in \cite{Pipzousap22}. As far as we know, this seems to be the first physical model of CLM in the sense of being constructed from first (physics) principles. We also note that the CLM phenomenon in the naval applications is associated with a non-zero cyclical phase $\phi \neq 0$.

A particularly interesting feature of the definition \eqref{eq:cycl-ACVF} is its multiplicative nature as the product of $\cos(\lambda_0 h + \phi)$ and $h^{2d-1}$. The term $h^{2d-1}$ corresponds to the usual LM behavior, while the term $\cos(\lambda_0 h + \phi)$ is associated with modulation. This suggests that CLM series could be viewed as modulated LM series, where the LM and cyclical effects are decoupled. In fact, when $\phi = 0$, such modulated series exhibiting CLM are straightforward to construct. Take two independent copies $\{Y_{1,n}\}_{n \in \ZZ},\{Y_{2,n}\}_{n \in \ZZ}$ of a zero mean LM series $Y$ satisfying $\gamma_Y(h) \sim c_\gamma h^{2d-1}$, as $h \to \infty$. Set
\begin{equation}\label{eq:cycl-constr-basic}
    X_n \doteq \cos(\lambda_0 n) Y_{1,n} + \sin(\lambda_0n) Y_{2,n}, \quad n \in \ZZ.
\end{equation}
By construction,
\begin{equation} \label{eq:acvf-X-basic}
\begin{split}
    \EE X_{n+h} X_{n} &= \big( \cos(\lambda_0 h) \cos(\lambda_0 (n+h)) + \sin(\lambda_0 h) \sin(\lambda_0 (n+h))   \big) \gamma_Y(h) \\
    &= \cos(\lambda_0 h) \gamma_Y(h) \sim c_\gamma \cos(\lambda_0 h) h^{2d-1}, \quad \text{as}\; h \to \infty,
\end{split}
\end{equation}
satisfies \eqref{eq:cycl-ACVF} with $\phi = 0$. In the CLM literature, this construction was studied and exploited in applications in \cite{ProMad22,MadPro22}, where the model \eqref{eq:cycl-constr-basic} was termed the \textit{fractional sinusoidal waveform process}.

The model \eqref{eq:cycl-constr-basic} has in fact a long history in signal processing, where its construction is known as \textit{random modulation} (RMod, for short). See, for example, a review paper \cite{Pap83}. A textbook on probability for engineers \cite{PapBook} also includes a section (Section 11.3) on this topic. In the context of random modulation, the series $\{Y_{1,n}\}_{n \in \ZZ}$ and $\{Y_{2,n}\}_{n \in \ZZ}$ need not be independent for the RMod model \eqref{eq:cycl-constr-basic} to yield a stationary series. A sufficient condition is for the vector series
\begin{equation} \label{eq:Y_n-def}
Y_n = \begin{pmatrix}
    Y_{1,n} \\ Y_{2,n}
\end{pmatrix}
\end{equation}
to be second-order stationary with the matrix ACVF
\begin{equation} \label{eq:acvf-Y-general}
    \gamma_{Y}(h) = \EE Y_{n+h} Y_n^T = \begin{pmatrix}
        \EE Y_{1,n+h} Y_{1,n} & \EE Y_{1,n+h} Y_{2,n} \\
        \EE Y_{2,n+h} Y_{1,n} & \EE Y_{2,n+h} Y_{2,n}
    \end{pmatrix} \doteq 
    \begin{pmatrix}
        \gamma_{Y,11}(h) & \gamma_{Y,12}(h) \\
        \gamma_{Y,21}(h) & \gamma_{Y,22}(h)
    \end{pmatrix}
\end{equation}
satisfying
\begin{equation} \label{Property1} \tag{P-T}
    \gamma_{Y,11}(h) = \gamma_{Y,22}(h) \; \text{and} \; \gamma_{Y,12}(h) = - \gamma_{Y,21}(h) \; \text{for all}\; h \in \ZZ.
\end{equation} 
Under \eqref{Property1}, the ACVF of the RMod model \eqref{eq:cycl-constr-basic} can be checked to be (see the proof of Proposition \ref{thm:constr} below)
\begin{equation} \label{eq:acvf-X-model}
    \gamma_X(h) = \cos(\lambda_0 h) \gamma_{Y,11}(h) - \sin(\lambda_0 h) \gamma_{Y,12}(h).
\end{equation}
Papoulis \cite{Pap83} reviews a number of questions addressed in the past regarding the RMod series \eqref{eq:cycl-constr-basic} under the assumption \eqref{Property1}. We will draw connections to this literature below, but one important difference is that the focus here will be on LM series. The property \ref{Property1} is reformulated in the spectral domain in Proposition \ref{prop:spectral-property} below.

For uncorrelated LM series $\{Y_{1,n}\}_{n \in \ZZ}$ and $\{Y_{2,n}\}_{n \in \ZZ}$, the expression \eqref{eq:acvf-X-model} reduces to \eqref{eq:acvf-X-basic} and leads to CLM \eqref{eq:cycl-ACVF} with $\phi = 0$. The presence of both cosine and sine in \eqref{eq:acvf-X-model} suggests that \textit{correlated} LM series $\{Y_{1,n}\}_{n \in \ZZ}$ and $\{Y_{2,n}\}_{n \in \ZZ}$ might lead to the RMod series \eqref{eq:cycl-constr-basic} exhibiting CLM with general cyclical phase $\phi$. This leads to the following main questions addressed in this work:
\begin{description}
    \item[Q1:] Are there correlated LM series $\{Y_{1,n}\}_{n \in \ZZ}$ and $\{Y_{2,n}\}_{n \in \ZZ}$ (or bivariate LM series $Y$ in \eqref{eq:Y_n-def}) having property \eqref{Property1} such that the RMod series \eqref{eq:cycl-constr-basic} is CLM satisfying \eqref{eq:cycl-ACVF} with general $\phi$?

    \item[Q2:] Are there parametric bivariate LM series $Y$ in Q1 which are suited for modeling CLM and related tasks, especially for resulting CLM models having explicit ACVF?
\end{description}
We show in this work that the answers to both questions are affirmative. More specifically, we characterize the bivariate LM series in Q1 and establish conditions for the resulting RMod model \eqref{eq:cycl-constr-basic} to have CLM with a particular phase $\phi$. We also propose a parametric model for such series $Y$ with explicit ACVF, and hence the same for the RMod model \eqref{eq:cycl-constr-basic} in view of the relation \eqref{eq:acvf-X-model}. This offers computational and modeling advantages over the Gegenbauer models discussed above. Various additional contributions regarding this construction and CLM will also be made, where we would draw the reader's attention to the extensions of our approach allowing for the exponents in \eqref{eq:cyclical-spectrum} as $\lambda \to \lambda_0^+$ and $\lambda \to \lambda_0^-$ to be different, that is, 
\begin{equation} \label{eq:cyclical-spectrum-assym}
 f_X(\lambda) \sim 
 \begin{cases}
       c_{f,+} (\lambda - \lambda_0)^{-2d_+}, &\quad\text{as}\; \lambda \to \lambda_0^+,\\
       c_{f,-} (\lambda_0 - \lambda)^{-2d_-}, &\quad\text{as}\; \lambda \to \lambda_0^-,
 \end{cases}
\end{equation}
where $c_{f,+}, c_{f,-} > 0$ and $d_-,d_+ \in (0,1/2)$. We naturally construct such series $X$ as $X = X^+ + X^-$ by taking uncorrelated $X^+,X^-$ satisfying \eqref{eq:cyclical-spectrum} with 
\begin{equation} \label{eq:cyclical-spectrum-assym-constr}
 f_{X^+}(\lambda) \sim 
 \begin{cases}
       c_{f,+} (\lambda - \lambda_0)^{-2d_+}, &\quad\text{as}\; \lambda \to \lambda_0^+,\\
       0 \cdot (\lambda_0 - \lambda)^{-2d_+}, &\quad\text{as}\; \lambda \to \lambda_0^-,
 \end{cases}, \quad
  f_{X^-}(\lambda) \sim 
 \begin{cases}
       0  \cdot  (\lambda - \lambda_0)^{-2d_-}, &\quad\text{as}\; \lambda \to \lambda_0^+,\\
       c_{f,-} (\lambda_0 - \lambda)^{-2d_-}, &\quad\text{as}\; \lambda \to \lambda_0^-.
 \end{cases}
\end{equation}
We need, however, to take into account the delicate issues around the ``boundary" cases in \eqref{eq:cyclical-spectrum-assym-constr} as noted following \eqref{eq:cycl-phi-boundary}.

The rest of the paper is organized as follows. In Section \ref{sec:prelim}, we recall some facts about bivariate LM series and provide some results on bivariate series satisfying \eqref{Property1}. In Section \ref{sec:cycl-constru}, we establish connections between bivatiate LM and CLM series through the RMod construction \eqref{eq:cycl-constr-basic} and provide a parametric model for the bivariate and CLM series. Section \ref{sec:extensions} extends the CLM construction to models with different memory parameters as in \eqref{eq:cyclical-spectrum-assym} and to models with multiple singularities. Section \ref{sec:numerical} provides numerical illustrations. Section \ref{sec:conclusions} concludes. Appendices \ref{app:sec-defn}--\ref{app:sec-auxillary} gather results relating the definitions \eqref{eq:cycl-ACVF} and \eqref{eq:cyclical-spectrum} of CLM.

\section{Preliminaries} \label{sec:prelim}

As discussed in Section \ref{sec:intro}, our CLM model will involve bivariate LM series and RMod. We recall basic facts about bivariate LM in Section \ref{subsec:mlm} below. We relate the parameters of the two CLM definitions in Section \ref{subsec:connec-clm-def}. We also provide one general construction of bivariate series that satisfy property \eqref{Property1} and hence that can be used in the RMod model \eqref{eq:cycl-constr-basic}; see Section \ref{subsec:random-modu}. This construction will be adapted to bivariate LM series in Section \ref{sec:cycl-constru} to construct a parametric model of CLM.

\subsection{Bivariate Long Memory} \label{subsec:mlm}

Let $Y_n = (Y_{1,n},Y_{2,n})^T, n \in \ZZ,$ be a bivariate second-order stationary time series with matrix ACVF $\gamma_Y(h), h \in \ZZ$, in \eqref{eq:acvf-Y-general}. Again, we assume $\EE Y_n = 0$ for simplicity. The respective spectral density will be denoted $f_Y(\lambda), \lambda \in (-\pi,\pi)$. Under mild assumptions, one expects $f_Y(\lambda) = (2\pi)^{-1} \sum_{h=-\infty}^\infty e^{-i \lambda h} \gamma_Y(h)$. 

The time series $Y$ is called \textit{bivariate LM} if one of the following two conditions is satisfied:
\begin{itemize}
    \item Time domain: as $h \to \infty$,
       \begin{equation} \label{eq:biv-gamma-general}
           \gamma_Y(h) = \begin{pmatrix}
               \gamma_{Y,11}(h) & \gamma_{Y,12}(h) \\
               \gamma_{Y,21}(h) & \gamma_{Y,22}(h)
           \end{pmatrix}
           \sim \begin{pmatrix}
               R_{11} h^{2d_1-1} & R_{12} h^{d_1 + d_2 -1} \\
               R_{21} h^{d_1 + d_2 -1} & R_{22} h^{2d_2-1}
           \end{pmatrix},
       \end{equation}
      where $d_j \in (0,1/2)$, $R_{jk} \in \RR, j,k=1,2$, and $R_{11},R_{22} > 0$.

    \item Spectral domain: as $\lambda \to 0^+$,
    \begin{equation} \label{eq:biv-spectral-general}
        f_Y(\lambda) = \begin{pmatrix}
            f_{Y,11}(\lambda) & f_{Y,12}(\lambda) \\
             f_{Y,21}(\lambda) & f_{Y,22}(\lambda)
        \end{pmatrix}
        \sim \begin{pmatrix}
               g_{11} \lambda^{-2d_1} & (g_{12}e^{-i\om}) \lambda^{-d_1 -d_2 } \\
               (g_{12}e^{i\om}) \lambda^{-d_1 - d_2 } & g_{22} \lambda^{-2d_2}
           \end{pmatrix},
    \end{equation}
    where $d_{j} \in (0,1/2),j=1,2$, $g_{11},g_{22} > 0, g_{12} \in \RR,$ and $\om \in (-\pi,\pi)$.
\end{itemize}

Under mild assumptions, the time-domain and spectral-domain definitions above can be shown to be equivalent (Proposition 2.1 in \cite{KecPi15}). The assumptions will be satisfied for the models considered below, so we will use them interchangeably. There are also explicit formulas relating $R_{11},R_{12},R_{21},R_{22}$ and $g_{11},g_{12},g_{21},\om$ in \cite{KecPi15}.

The parameter $\om$ in \eqref{eq:biv-spectral-general}  is called the \textit{bivariate phase parameter}. It controls the (a)symmetry of the series at large lags, that is, one has $\om = 0$ if and only if $R_{12} = R_{21}$ in \eqref{eq:biv-gamma-general}. The parameter $\om$ should not be confused with the cyclical phase parameter $\phi$ in \eqref{eq:cycl-ACVF}, even if both are related to symmetry (see \eqref{eq:cycl-phi-0} for $\phi$).

A way to construct bivariate LM series is through two-sided linear representations of the form
\begin{equation} \label{eq:Zn}
    Y_n = \sum_{l= - \infty}^\infty A_l \veps_{n-l}, \quad n \in \ZZ, 
\end{equation}
where $\{\veps_n\}_{n \in \ZZ}$ is a white noise series (i.e., $\EE \veps_n = 0$, $\EE \veps_n \veps_m^T = 0, n \neq m$, and $\EE \veps_n \veps_n^T = I_2$) and $A_l \in \RR^{2 \times 2}$ are such that 
\begin{equation} \label{eq:Al-asympt}
    A_l \sim \begin{cases}
        \begin{pmatrix}
            A_{11}^+ l^{d_1-1} & A_{12}^+ l^{d_1-1} \\
            A_{21}^+ l^{d_2-1} & A_{22}^+ l^{d_2-1}
        \end{pmatrix}, \quad l \to + \infty, \vspace{2mm} \\
        \begin{pmatrix}
            A_{11}^- (-l)^{d_1-1} & A_{12}^- (-l)^{d_1-1} \\
            A_{21}^- (-l)^{d_2-1} & A_{22}^- (-l)^{d_2-1}
        \end{pmatrix}, \quad l \to - \infty.
    \end{cases}
\end{equation}
There are similar formulas relating $A^{\pm}_{jk}$ in \eqref{eq:Al-asympt} to $R_{j,k}$ in \eqref{eq:biv-gamma-general} (Proposition 3.1 in \cite{KecPi15}).

\subsection{Connections Between CLM Definitions} \label{subsec:connec-clm-def}

In this section, we relate the parameters $c_\gamma,\phi$ in \eqref{eq:cycl-ACVF} with $c_f^+,c_f^-$ in \eqref{eq:cyclical-spectrum}. We prove these relations in the appendix, with special treatment for the ``boundary" case. One can obtain $c_\gamma,\phi$ from $c_f^+,c_f^-$ by
\begin{equation} \label{eq:cgamma-psi}
   \begin{split}
    c_\gamma &\doteq 2\Gamma(1-2d)\sqrt{(c_f^+)^2 + (c_f^-)^2 - 2 c_f^+ c_f^-\left(\cos( 2 \pi d) \right)^2}, \\
    \phi &\doteq  \arcsin \left( \frac{(c_f^+ - c_f^-) \cos(\pi d)}{\sqrt{(c_f^+)^2 + (c_f^-)^2 - 2 c_f^+ c_f^-\left(\cos(2 \pi d) \right)^2}}  \right).
   \end{split}
\end{equation}
See Appendix \ref{app:sec-spec-to-time}. Moreover, one can obtain $c_f^+,c_f^-$ from $c_\gamma,\phi$ by
\begin{equation} \label{eq:cfpm-cgamma}
    c_f^\pm \doteq \frac{c_\gamma}{2\pi} \Gamma(2d) \cos(\pi d \mp \phi).
\end{equation}
See Appendix \ref{app:sec-time-to-spec}.

\subsection{Bivariate Series for Random Modulation} \label{subsec:random-modu}

Consider the RMod model in \ref{eq:cycl-constr-basic}. Recall from Section \ref{sec:intro} that the bivariate stationary series $Y = \{Y_{j,n}\}_{j=1,2,n \in \ZZ}$ should satisfy property \eqref{Property1} for the RMod model to be stationary. In this section, we find one convenient representation of the series $Y$ that will satisfy \eqref{Property1}. More specifically, consider the two-sided linear representation \eqref{eq:Zn}, where $A_l \in \RR^{2 \times 2}$ are such that $\sum_{l=-\infty}^\infty \|A_l \|_F^2 < \infty$, where $\| \cdot \|_F$ denotes the Frobenius norm of a matrix.
\begin{proposition} \label{thm:linear-repr-biv}
   Let a bivariate second-order stationary series $Y = \{Y_n\}_{n \in \ZZ}$ be given by \eqref{eq:Zn}. If 
    \begin{equation} \label{eq:A+A-general}
    A_l =
      \begin{pmatrix}
          a_{0,l} &  a_{1,l} \\
          -a_{1,l} &  a_{0,l}
      \end{pmatrix}, \quad  l \in \ZZ, 
     \end{equation}
then $Y$ satisfies property \eqref{Property1}.
\end{proposition}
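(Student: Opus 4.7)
The plan is a direct computation of $\gamma_Y(h)$ from the linear representation, combined with a compact algebraic observation. Writing $J = \begin{pmatrix} 0 & 1 \\ -1 & 0 \end{pmatrix}$, the matrices appearing in \eqref{eq:A+A-general} are exactly those of the form $a I_2 + b J$ with $a, b \in \RR$. Since $J^2 = -I_2$ and $J$ commutes with $I_2$, the set $\{a I_2 + b J : a,b \in \RR\}$ is a commutative subalgebra of $\RR^{2 \times 2}$, isomorphic to $\CC$. Moreover, $(aI_2 + bJ)^T = aI_2 - bJ$, so this subalgebra is stable under transposition.

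First, I would use the white-noise property $\EE\, \veps_{n-l_1}\veps_{n-l_2}^T = I_2\, \delta_{l_1,l_2}$ and the assumption $\sum_l \|A_l\|_F^2 < \infty$ (which guarantees absolute convergence) to obtain
\begin{equation}
\gamma_Y(h) = \EE\, Y_{n+h} Y_n^T = \sum_{l \in \ZZ} A_{l+h} A_l^T.
\end{equation}
Second, by the algebraic observation above, each summand $A_{l+h} A_l^T$ is again of the form $a I_2 + b J$: the transpose $A_l^T$ stays in the subalgebra, and the subalgebra is closed under multiplication. Summing preserves this form, so $\gamma_Y(h) = p(h) I_2 + q(h) J$ for some real scalars $p(h), q(h)$. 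Reading off entries, this is precisely $\gamma_{Y,11}(h) = \gamma_{Y,22}(h) = p(h)$ and $\gamma_{Y,12}(h) = -\gamma_{Y,21}(h) = q(h)$, which is property \eqref{Property1}.

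There is no real obstacle here; the statement is essentially a one-line consequence of the closure properties of the subalgebra $\RR I_2 + \RR J \cong \CC$. If one prefers to avoid the algebraic framing, the $2 \times 2$ product $A_{l+h} A_l^T$ can be expanded entry by entry, and one checks directly that its $(1,1)$ and $(2,2)$ entries coincide while its $(1,2)$ and $(2,1)$ entries are negatives of each other, before summing over $l$.
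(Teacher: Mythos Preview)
Your proof is correct and follows essentially the same approach as the paper: compute $\gamma_Y(h) = \sum_{l} A_{l+h} A_l^T$ from the linear representation and verify that each summand has the required structure. The paper does exactly the entry-by-entry expansion you mention as an alternative at the end; your framing via the commutative subalgebra $\RR I_2 + \RR J \cong \CC$ (closed under transposition and products) is a slightly cleaner way to package the same computation.
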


\begin{proof}
Recall that $\EE \veps_n \veps_n^T = I_2$. Then, 
\begin{equation}
\begin{split}
    \gamma_Y(h) &=\EE Y_h Y_0^T  = \sum_{l = -\infty}^\infty A_{l + h} A_l^T = \sum_{l = -\infty}^{\infty} \begin{pmatrix}
        a_{0,l+h}  & a_{1,l+h} \\
        -a_{1,l+h} & a_{0,l+h}
    \end{pmatrix}  \begin{pmatrix}
        a_{0,l}  & -a_{1,l} \\
        a_{1,l} & a_{0,l}
    \end{pmatrix} \\
    &= \sum_{l = -\infty}^{\infty} \begin{pmatrix}
        a_{0,l+h} a_{0,l} + a_{1,l+h} a_{1,l} & -a_{0,l+h} a_{1,l} + a_{1,l+h} a_{0,l} \\
         -a_{1,l+h} a_{0,l} + a_{0,l+h} a_{1,l}  &  a_{0,l+h} a_{0,l} + a_{1,l+h} a_{1,l}
    \end{pmatrix}.
\end{split}
\end{equation}
This says that $\gamma_Y$ satisfies the required property.
\end{proof}

\begin{remark}
    We conjecture that any second-order stationary series $Y$ satisfying property \eqref{Property1} has a linear representation \eqref{eq:Zn} with coefficient matrices given by \eqref{eq:A+A-general}.
\end{remark}

Property \eqref{Property1} and the resulting ACVF in \eqref{eq:acvf-X-model} are expressed in the time domain. In the following result, we express them in the spectral domain.

\begin{proposition} \label{prop:spectral-property}
    Suppose the matrix ACVF $\gamma_Y$ of a bivariate series $Y$ satisfies property \eqref{Property1}. Assume that $Y$ has the spectral density $f_Y(\lambda) = (f_{Y,jk}(\lambda))_{j,k=1,2}$ connected to $\gamma_Y$ through the usual relations: $
        f_Y(\lambda) = \frac{1}{2\pi} \sum_{h=-\infty}^\infty e^{-i\lambda h} \gamma_Y(h), \; \gamma_Y(h) = \int_{-\pi}^\pi e^{ih\lambda} f(\lambda) d\lambda.$
    Then, property \eqref{Property1} is equivalent to:
    \begin{equation} \label{eq:property-spec} \tag{P-S}
        f_{Y,11}(\lambda) = f_{Y,22}(\lambda)\quad \text{and}\; \quad f_{Y,12}(\lambda) = - f_{Y,21}(\lambda), \quad \text{for all } \lambda \in (-\pi,\pi).
    \end{equation}
    Furthermore, under \eqref{Property1}, the spectral density of the RMod series \eqref{eq:cycl-constr-basic} can be expressed as
    \begin{equation} \label{eq:f-X-from-Y-general}
            f_X(\lambda) = \frac{1}{2} \left[ f_{Y,11}(\lambda - \lambda_0) + f_{Y,11}(\lambda + \lambda_0)    \right] - \frac{1}{2i} \left[  f_{Y,12}(\lambda - \lambda_0) - f_{Y,12}(\lambda + \lambda_0)  \right].
    \end{equation}
\end{proposition}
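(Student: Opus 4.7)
The plan has two independent parts. First, for the equivalence of \eqref{Property1} and \eqref{eq:property-spec}, I would use the stated Fourier pair relating $\gamma_Y$ and $f_Y$ entrywise. Since the Fourier transform (in both directions) is linear, the identities $\gamma_{Y,11}(h) = \gamma_{Y,22}(h)$ translate immediately into $f_{Y,11}(\lambda) = f_{Y,22}(\lambda)$, and likewise $\gamma_{Y,12}(h) = -\gamma_{Y,21}(h)$ translates into $f_{Y,12}(\lambda) = -f_{Y,21}(\lambda)$. Uniqueness of the Fourier inversion (applied under the mild summability conditions that guarantee the existence of $f_Y$) gives the converse direction. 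This step is essentially bookkeeping.

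For the spectral formula \eqref{eq:f-X-from-Y-general}, my starting point is the time-domain expression \eqref{eq:acvf-X-model} for $\gamma_X$, which holds under \eqref{Property1} and is established in the proof of Proposition \ref{thm:constr}. Taking the Fourier transform entry by entry, I would write
\begin{equation}
f_X(\lambda) = \frac{1}{2\pi} \sum_{h \in \ZZ} e^{-i\lambda h} \bigl[ \cos(\lambda_0 h)\, \gamma_{Y,11}(h) - \sin(\lambda_0 h)\, \gamma_{Y,12}(h) \bigr],
\end{equation}
then replace $\cos(\lambda_0 h) = (e^{i\lambda_0 h} + e^{-i\lambda_0 h})/2$ and $\sin(\lambda_0 h) = (e^{i\lambda_0 h} - e^{-i\lambda_0 h})/(2i)$. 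The two resulting exponential factors absorb into the Fourier kernel, producing shifts $\lambda \mapsto \lambda \mp \lambda_0$ in the argument of $f_{Y,11}$ and $f_{Y,12}$ respectively. Collecting the terms yields exactly \eqref{eq:f-X-from-Y-general}.

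There is no real obstacle here: the argument is the standard modulation identity for Fourier series, applied to a sum of two modulated ACVFs. The one point to be mindful of is that the sums defining $f_{Y,11}$ and $f_{Y,12}$ must converge absolutely (or at least in an appropriate sense) for the term-by-term rearrangement to be valid, but this is guaranteed by the standing assumption that $Y$ has a spectral density related to $\gamma_Y$ through the stated inversion formulas. No use of property \eqref{Property1} is needed to carry out the shift computation itself; \eqref{Property1} is only used to reduce the ACVF of $X$ to the two-term expression in \eqref{eq:acvf-X-model} in the first place.
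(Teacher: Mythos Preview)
Your proposal is correct and follows essentially the same approach as the paper: the equivalence of \eqref{Property1} and \eqref{eq:property-spec} is handled by applying the Fourier transform entrywise, and \eqref{eq:f-X-from-Y-general} is obtained by starting from \eqref{eq:acvf-X-model}, writing $\cos$ and $\sin$ as complex exponentials, and recognizing the resulting frequency shifts. The paper's proof is slightly more terse on the equivalence (it only displays one direction explicitly), but the substance is identical.
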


\begin{proof}
The claim in \eqref{eq:property-spec} follows upon writing, by \eqref{Property1},
\[
f_Y(\lambda) = \frac{1}{2\pi} \sum_{h=-\infty}^\infty e^{-i\lambda h} \gamma_Y(h) =  \frac{1}{2\pi} \sum_{h=-\infty}^\infty e^{-i\lambda h} \begin{pmatrix}
    \gamma_{Y,11}(h) & \gamma_{Y,12}(h) \\
    -\gamma_{Y,12}(h) & \gamma_{Y,11}(h)
\end{pmatrix}.
\]
For the claim in \eqref{eq:f-X-from-Y-general}, note that
    \begin{equation}
    \begin{split}
        f_X(\lambda) &= \frac{1}{2\pi} \sum_{h=-\infty}^\infty e^{-ih\lambda} \gamma_X(h) \\
        &=\frac{1}{2\pi} \sum_{h=-\infty}^\infty e^{-ih\lambda} \left( \cos(\lambda_0 h) \gamma_{Y,11}(h) - \sin(\lambda_0 h) \gamma_{Y,12}(h)   \right) \\
        &= \frac{1}{2} \frac{1}{2\pi} \sum_{h=-\infty}^\infty \left[ e^{-ih(\lambda-\lambda_0)}  \gamma_{Y,11}(h) + e^{-ih(\lambda+\lambda_0)}  \gamma_{Y,11}(h) \right] \\
        &\quad - \frac{1}{2i} \frac{1}{2\pi} \sum_{h=-\infty}^\infty \left[ e^{-ih(\lambda-\lambda_0)}  \gamma_{Y,12}(h) - e^{-ih(\lambda+\lambda_0)}  \gamma_{Y,12}(h)
        \right] \\
        &= \frac{1}{2} \left[ f_{Y,11}(\lambda - \lambda_0) + f_{Y,11}(\lambda + \lambda_0)    \right] - \frac{1}{2i} \left[  f_{Y,12}(\lambda - \lambda_0) - f_{Y,12}(\lambda + \lambda_0)  \right],
    \end{split}
    \end{equation}
where we have used the formula for $\gamma_X$ from \eqref{eq:acvf-X-model} in the second line and the representations of $\cos$ and $\sin$ as complex exponentials in the third. 
\end{proof}

We next provide a linear representation for the RMod series $\{X_n\}_{n \in \ZZ}$ in \eqref{eq:cycl-constr-basic} when the bivariate series $\{Y_n\}_{n \in \ZZ}$ assumes the form in \eqref{eq:Zn}--\eqref{eq:A+A-general}. Although a special case of the following result was proved in \cite{ProMad22}, we have not encountered an analogous result in the RMod literature. Recall the definition of the sign function
\begin{equation} \label{def:sign-func}
    \text{sign}(c) = \begin{cases}
    1 & c > 0, \\
    0 & c = 0, \\
    -1 & c < 0.
\end{cases}
\end{equation}

\begin{theorem}
\label{thm:cycl-linear-general}
    Let $\{X_n\}_{n \in \ZZ}$ and $\{Y_n\}_{n \in \ZZ}$ be given by \eqref{eq:cycl-constr-basic} and \eqref{eq:Zn}--\eqref{eq:A+A-general} respectively. Then, $\{X_n\}_{n \in \ZZ}$ has the linear representation
    \begin{equation} \label{eq:cycl-linear-general}
        X_n = \sum_{j = -\infty}^\infty  \begin{pmatrix}
    \cos(\lambda_0 j) a_{0,j} - \sin(\lambda_0 j) a_{1,j} \\
    \cos(\lambda_0 j) a_{1,j} + \sin(\lambda_0 j) a_{0,j}
\end{pmatrix}^T  \tilde \veps_{n-j} , \quad n \in \ZZ,
     \end{equation}
    where $\{\tilde \veps_n\}_{n \in \ZZ}$ is a WN sequence defined as, for $n \in \ZZ$,
\begin{equation}
    \tilde \veps_n \doteq \begin{pmatrix}
        \cos(\lambda_0 n) & \operatorname{sign}(n) \sin(\lambda_0 n) \\
        - \operatorname{sign}(n) \sin(\lambda_0 n) & \cos(\lambda_0 n)
    \end{pmatrix} \veps_n.
\end{equation}
In particular, $\EE (\tilde \veps_k \tilde \veps_l^T) = I_2 \delta_{kl}$, where $\delta_{kl}$ denotes the Kronecker delta.
\end{theorem}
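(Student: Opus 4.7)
The plan is to derive the displayed representation of $X_n$ by direct substitution followed by an angle-addition manipulation, and then to obtain the white-noise property of $\{\tilde\veps_n\}$ from the orthogonality of the linear transformation defining it.

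First, I would substitute the special form \eqref{eq:A+A-general} of $A_l$ into \eqref{eq:Zn} to write
\[
Y_{1,n} = \sum_{l \in \ZZ} \bigl(a_{0,l}\veps_{1,n-l} + a_{1,l}\veps_{2,n-l}\bigr), \qquad Y_{2,n} = \sum_{l \in \ZZ} \bigl(-a_{1,l}\veps_{1,n-l} + a_{0,l}\veps_{2,n-l}\bigr),
\]
and then plug these into $X_n = \cos(\lambda_0 n)Y_{1,n} + \sin(\lambda_0 n)Y_{2,n}$ from \eqref{eq:cycl-constr-basic}. Collecting like terms yields an expansion of $X_n$ as a sum over $l$ in which the coefficients of $\veps_{1,n-l}$ and $\veps_{2,n-l}$ are $\cos(\lambda_0 n)a_{0,l} - \sin(\lambda_0 n)a_{1,l}$ and $\cos(\lambda_0 n)a_{1,l} + \sin(\lambda_0 n)a_{0,l}$, respectively.

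The crux of the argument is to split the $n$-dependence between the coefficient index (relabeled $j$) and the innovation index $n-j$. Writing $\lambda_0 n = \lambda_0 j + \lambda_0(n-j)$ and invoking the angle-addition identities
\[
\cos(\lambda_0 n) = \cos(\lambda_0 j)\cos(\lambda_0(n-j)) - \sin(\lambda_0 j)\sin(\lambda_0(n-j)),
\]
\[
\sin(\lambda_0 n) = \sin(\lambda_0 j)\cos(\lambda_0(n-j)) + \cos(\lambda_0 j)\sin(\lambda_0(n-j)),
\]
I would regroup so that the $j$-dependent trigonometric factors attach to $(a_{0,j}, a_{1,j})$, producing exactly the row vector in \eqref{eq:cycl-linear-general}, while the $(n-j)$-dependent factors combine with $(\veps_{1,n-j}, \veps_{2,n-j})$ to form the two components of $\tilde\veps_{n-j}$. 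The main obstacle I anticipate is the sign bookkeeping: one must reconcile the $\operatorname{sign}(n-j)$ factors appearing in the definition of $\tilde\veps$ with the standard angle-addition identities, treating the cases $n-j > 0$ and $n-j < 0$ separately and checking the boundary case $n-j = 0$, where the matrix in the definition of $\tilde\veps_0$ collapses to the identity by the convention in \eqref{def:sign-func}.

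Finally, to obtain the claim $\EE(\tilde\veps_k\tilde\veps_l^T) = I_2\delta_{kl}$, I would observe that $\tilde\veps_n = M_n\veps_n$ with $M_n$ orthogonal, since the rows of $M_n$ have unit norm and are mutually orthogonal by a one-line computation using $\operatorname{sign}(n)^2 \in \{0,1\}$ and $\cos^2 + \sin^2 = 1$. Because $\{\veps_n\}$ is a WN sequence with $\EE[\veps_k\veps_l^T] = I_2\delta_{kl}$, this yields
\[
\EE[\tilde\veps_k\tilde\veps_l^T] = M_k\,\EE[\veps_k\veps_l^T]\,M_l^T = \delta_{kl}\,M_k M_k^T = \delta_{kl}\,I_2,
\]
completing the argument.
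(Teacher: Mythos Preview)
Your approach mirrors the paper's: the paper encodes your angle-addition step as the factorization $M^{n}=M^{j}M^{n-j}$ with $M=\bigl(\begin{smallmatrix}\cos\lambda_0 & \sin\lambda_0\\ -\sin\lambda_0 & \cos\lambda_0\end{smallmatrix}\bigr)$, and replaces your regrouping by the commutativity $M^{n-j}A_j=A_jM^{n-j}$ (both matrices are of the form $\bigl(\begin{smallmatrix}a & b\\ -b & a\end{smallmatrix}\bigr)$), arriving at $X_n=\sum_j (v_j^{T}A_j)\,(M^{n-j}\veps_{n-j})$ with $v_j=(\cos(\lambda_0 j),\sin(\lambda_0 j))^{T}$. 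Your componentwise computation is an equivalent unpacking of this.

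However, the ``sign bookkeeping obstacle'' you anticipate is real and will \emph{not} resolve in the direction you expect. Write $c_k=\cos(\lambda_0 k)$, $s_k=\sin(\lambda_0 k)$. For $n-j<0$ (so $\operatorname{sign}(n-j)=-1$), the coefficient of $\veps_{1,n-j}$ coming from the stated representation works out to
\[
(c_j c_{n-j}+s_j s_{n-j})\,a_{0,j}-(s_j c_{n-j}-c_j s_{n-j})\,a_{1,j}=c_{2j-n}\,a_{0,j}-s_{2j-n}\,a_{1,j},
\]
which is not $c_n a_{0,j}-s_n a_{1,j}$ in general (try $\lambda_0=\pi/2$, $n=0$, $j=1$). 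The $\operatorname{sign}(n)$ factors in the stated definition of $\tilde\veps_n$ are spurious: the correct matrix is simply $M^{n}=\bigl(\begin{smallmatrix}\cos(\lambda_0 n) & \sin(\lambda_0 n)\\ -\sin(\lambda_0 n) & \cos(\lambda_0 n)\end{smallmatrix}\bigr)$ for \emph{all} $n\in\ZZ$, with no case distinction. (The paper's own proof contains the matching slip in its displayed formula for $M^{j}$ when $j\le 0$; the two cases shown there are in fact the same matrix, and it is not $M^{j}$ for $j<0$.) With that correction your angle-addition argument goes through uniformly in the sign of $n-j$, and your orthogonality verification $M^{n}(M^{n})^{T}=I_2$ for the white-noise claim is unaffected.
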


\begin{proof}
    Define 
    \[
    M \doteq \begin{pmatrix}
        \cos(\lambda_0) & \sin(\lambda_0) \\
        -\sin(\lambda_0) & \cos(\lambda_0)
    \end{pmatrix}, \quad 
    v_n \doteq \begin{pmatrix}
        \cos(\lambda_0 n) & \sin(\lambda_0 n)
    \end{pmatrix}^T,   \quad
    v_0 \doteq \begin{pmatrix}
        1 & 0
    \end{pmatrix}^T,
    \]
    and observe that the following identities hold
    \[
    M^j =
    \begin{cases}
        \begin{pmatrix}
        \cos(\lambda_0 j) & \sin(\lambda_0 j) \\
        -\sin(\lambda_0 j) & \cos(\lambda_0 j) 
    \end{pmatrix} & \text{if }j \ge 1, \vspace{.3cm}\\
        \begin{pmatrix}
        \cos(\lambda_0 j) & -\sin(\lambda_0 j) \\
        \sin(\lambda_0 j) & \cos(\lambda_0 j) 
    \end{pmatrix} & \text{if }j \le 0.
   \end{cases}
   \quad
   v_j^T =   v_0^T M^j, \quad j \in \ZZ.
    \]
Recall the linear representation of $Y_n$ in \eqref{eq:A+A-general}, and write
\begin{equation}
X_n = v_n^T Y_n = v_0^T M^n \sum_{j= -\infty}^\infty A_j \veps_{n-j} \\ 
        = \sum_{j=-\infty}^\infty v_0^T M^j M^{n-j} A_j \veps_{n-j}.
\end{equation}
Since for all $k \in \ZZ$, $M^k = \begin{pmatrix}
    \alpha & \beta \\
    -\beta & \alpha
\end{pmatrix}$ for some $\alpha,\beta$, the following commutativity property holds:
\[
M^k A_l = A_l M^k, 
\]
for all $l,k \in \ZZ$. Therefore,
    \begin{equation}
    \begin{split}
        X_n  = \sum_{j=-\infty}^\infty v_0^T M^j M^{n-j} A_j \veps_{n-j} 
        = \sum_{j=-\infty}^\infty v_0^T M^j A_j M^{n-j} \veps_{n-j} 
        = \sum_{j=-\infty}^\infty v_j^T  A_j \tilde \veps_{n-j} = \sum_{j=-\infty}^\infty \tilde A_j \tilde \veps_{n-j},
    \end{split}
    \end{equation}
where, for $k \in \ZZ$,
\[
\tilde A_k \doteq v_k^T  A_k = \begin{pmatrix}
    \cos(\lambda_0 k) a_{0,k} - \sin(\lambda_0 k) a_{1,k} \\
    \cos(\lambda_0 k) a_{1,k} + \sin(\lambda_0 k) a_{0,k}
\end{pmatrix}^T, \quad \tilde \veps_{k} \doteq M^{k}  \veps_{k}.
\]
In particular, upon noting that, for all $k \in \ZZ$,
$M^k (M^k)^T = I_2$, we see that
\[
 \EE\left( \tilde \veps_{k} \tilde \veps_{k}^T   \right) = \EE \left( M^{k}  \veps_{k} \veps_{k}^T  (M^k)^T  \right) = M^k (M^k)^T =  I_2.
\]
This concludes the proof.
\end{proof}

\section{CLM Model From Random Modulation} \label{sec:cycl-constru}

\subsection{CLM Model Construction} \label{subsec:cycl-constr}

In view of Proposition \ref{thm:linear-repr-biv}, its condition \eqref{eq:A+A-general}, and the representation \eqref{eq:Zn}--\eqref{eq:Al-asympt}, it is now straightforward to construct bivariate LM series $Y$ satisfying property \eqref{Property1}. From property \eqref{Property1}, note that for this $Y$, the limiting form of the ACVF in \eqref{eq:biv-gamma-general} necessarily has
\begin{align} \label{eq:d1d2d}
    d_1 = &d_2 \doteq d,  \\
 \begin{pmatrix}  
        R_{11} & R_{12} \\
        R_{21} & R_{22}
    \end{pmatrix} &\doteq 
    \begin{pmatrix}
        r_0 & -r_1 \\
        r_1 & r_0
    \end{pmatrix}     \label{eq:acvf-asympotitc-Y} 
\end{align}
for some $r_0 > 0,r_1 \in \RR$. In view of \eqref{eq:acvf-asympotitc-Y} and \eqref{eq:property-spec} in Proposition \ref{prop:spectral-property}, the limiting form of the spectral density in \eqref{eq:biv-spectral-general} necessarily has
\begin{equation} \label{eq:spectral-asympotitc-Y}
       \begin{pmatrix}
        g_{11} & g_{12} e^{-i \om} \\
        g_{12} e^{i \om} & g_{22}
    \end{pmatrix} = 
    \begin{pmatrix}
        g_0 & i g_1 \\
        -i g_1 & g_0
    \end{pmatrix},
\end{equation}
for some $g_0,g_1 \in \RR, g_0 \neq 0,$ that is, a very special bivariate phase parameter $\om = - \frac{\pi}{2}$. (Equivalently, we can set $\om = \frac{\pi}{2}$, which would change the sign of $g_1$.)

Consider now a bivariate LM series $Y$ with the representation \eqref{eq:Zn}--\eqref{eq:Al-asympt}. For this $Y$ to satisfy property \eqref{Property1}, Proposition \ref{thm:linear-repr-biv} requires $A_l$ to have the form \eqref{eq:A+A-general}. This implies taking \eqref{eq:d1d2d} and also $A_{11}^{\pm} = A_{22}^{\pm}$, $A_{12}^{\pm} = - A_{21}^{\pm}$, in \eqref{eq:Al-asympt}. The memory parameter $d$ aside, this formulation has 4 parameters (2 for + and 2 for --), compared to 2 parameters in \eqref{eq:acvf-asympotitc-Y} or \eqref{eq:spectral-asympotitc-Y}. To reduce the number of parameters to 2, we suggest imposing
\begin{equation} \label{eq:A+A-format}
    A^+ \doteq   \begin{pmatrix}
        A_{11}^+ & A_{12}^+ \\
        A_{21}^+ & A_{22}^+
    \end{pmatrix} =
    \begin{pmatrix}
        a_0  & a_1 \\
        -a_1 & a_0
    \end{pmatrix}  
      =  \begin{pmatrix}
        A_{11}^- & A_{12}^- \\
        A_{21}^- & A_{22}^-
    \end{pmatrix}^T   \doteq (A^-)^T
\end{equation}
for some $a_0,a_1 \in \RR$. The transpose in \eqref{eq:A+A-format} is important, e.g., without the transpose, the resulting model necessarily leads to $R_{12} = R_{21} = 0$.

The following two results relate the parameters in the various forms \eqref{eq:d1d2d}--\eqref{eq:A+A-format} above, and describe the CLM structure of the RMod series \eqref{eq:cycl-constr-basic} resulting from the bivariate LM series $Y$ chosen above. The fact that the RMod $X$ in \eqref{eq:cycl-constr-basic} will have CLM follows immediately from the construction above, since by using \eqref{eq:acvf-X-model}, \eqref{eq:biv-gamma-general}, and \eqref{eq:acvf-asympotitc-Y},
\begin{align} \label{eq:rmod-basic}
  \gamma_X(h) &= \gamma_{Y,11}(h) \cos(\lambda_0 h) - \gamma_{Y,12}(h) \sin(\lambda_0h) \\
    &\simeq   R_{11} \cos(\lambda_0 h)  h^{2d-1} - R_{12} \sin(\lambda_0 h)       h^{2d-1} \\
&=  \left[ \cos(\lambda_0 h) r_0  + \sin(\lambda_0 h) r_1      \right]h^{2d-1} \\
    &= c_\gamma \cos(\lambda_0 h + \phi) h^{2d-1}, \quad \text{as } h \to \infty \label{eq:acvf-X-generic},
\end{align}
for $c_\gamma,\phi$ specified in the second result below.

\begin{proposition} \label{prop:r0-r1-a0-a1-g0-g1}
Let $Y$ have the representation \eqref{eq:Zn}--\eqref{eq:Al-asympt} with $A_l$ in \eqref{eq:Al-asympt} satisfying \eqref{eq:A+A-general} and \eqref{eq:A+A-format}. Let also the limiting form of the ACVF of $Y$ (resp. spectral density) be given by \eqref{eq:acvf-asympotitc-Y} (resp. \eqref{eq:spectral-asympotitc-Y}). Then,
\begin{equation} \label{eq:r0-r1-a0-a1}
r_0 = \frac{\Gamma^2(d)}{\Gamma(2d)} \left[   (a_0^2 + a_1^2) \frac{1}{\cos(\pi d)} + (a_0^2 - a_1^2) \right], \quad
r_1 = - 2 a_0 a_1 \frac{\Gamma^2(d)}{\Gamma(2d)} 
\end{equation}
and
    \begin{equation} \label{eq:g0-g1-r0-r1}
    g_0 = \frac{\Gamma(2d) }{ \pi }  \cos(\pi d) r_0, \quad g_1 = \frac{\Gamma(2d)}{\pi} \sin(\pi d)  r_1.
    \end{equation}
\end{proposition}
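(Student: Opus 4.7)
The plan is to derive \eqref{eq:r0-r1-a0-a1} and \eqref{eq:g0-g1-r0-r1} separately, each by specializing standard bivariate long-memory results to the structured matrices in \eqref{eq:A+A-general}--\eqref{eq:A+A-format}. For \eqref{eq:r0-r1-a0-a1}, I would begin from $\gamma_Y(h) = \sum_{l \in \ZZ} A_{l+h} A_l^T$ (using $\EE \veps_n \veps_n^T = I_2$) and extract the $(1,1)$ and $(1,2)$ entries as scalar convolutions $\sum_l a_{j,l+h} a_{k,l}$ for $j,k \in \{0,1\}$. For two sequences $c_l, d_l$ with $c_l \sim C_\pm |l|^{d-1}$ and $d_l \sim D_\pm |l|^{d-1}$ as $l \to \pm \infty$, the convolution admits the classical asymptotic
\[
\sum_{l \in \ZZ} c_{l+h} d_l \simeq (C_+ D_+ + C_- D_-) B(d, 1-2d) h^{2d-1} + C_+ D_- B(d,d) h^{2d-1}
\]
as $h \to +\infty$ (essentially Proposition 3.1 of \cite{KecPi15} specialized to $d_1 = d_2 = d$), which is obtained by splitting the summation index into $l \geq 1$, $-h < l < 0$, and $l \leq -h$, rescaling $l = uh$, and recognizing the resulting Riemann-sum limits as $\int_0^\infty u^{d-1}(1+u)^{d-1}\,du = B(d, 1-2d)$ in the two same-sign tails and $\int_0^1 u^{d-1}(1-u)^{d-1}\,du = B(d,d)$ in the opposite-sign middle region.

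Plugging in the entries prescribed by \eqref{eq:A+A-format}---namely $(C_+, C_-) = (a_0, a_0)$ when $c_l = a_{0,l}$ and $(C_+, C_-) = (a_1, -a_1)$ when $c_l = a_{1,l}$---the $(1,1)$ convolution evaluates to $R_{11} = 2(a_0^2 + a_1^2) B(d, 1-2d) + (a_0^2 - a_1^2) B(d,d)$, while the $(1,2)$ convolution yields $R_{12} = 2 a_0 a_1 B(d,d)$. I would then invoke $B(d,d) = \Gamma(d)^2/\Gamma(2d)$ and $B(d, 1-2d) = \Gamma(d)^2/[2\cos(\pi d)\Gamma(2d)]$---the latter following from Euler's reflection $\Gamma(z)\Gamma(1-z) = \pi/\sin(\pi z)$ applied at $z = d$ and $z = 2d$---to factor out $\Gamma(d)^2/\Gamma(2d)$ and read off $r_0 = R_{11}$ and $r_1 = -R_{12}$ (via \eqref{eq:acvf-asympotitc-Y}) in the claimed form.

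For \eqref{eq:g0-g1-r0-r1}, I would use the Fourier--Tauberian pair
\[
\sum_{h \in \ZZ} e^{-ih\lambda} |h|^{2d-1} \sim 2 \Gamma(2d) \cos(\pi d) \lambda^{-2d}, \quad \sum_{h \in \ZZ} e^{-ih\lambda} \sgn(h) |h|^{2d-1} \sim -2i \Gamma(2d) \sin(\pi d) \lambda^{-2d}
\]
as $\lambda \to 0^+$ with $d \in (0,1/2)$, both derivable from $\int_0^\infty x^{2d-1} \cos x\, dx = \Gamma(2d) \cos(\pi d)$ and $\int_0^\infty x^{2d-1} \sin x\, dx = \Gamma(2d) \sin(\pi d)$ after the rescaling $x = h\lambda$. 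Since $\gamma_{Y,11}$ is even with $\gamma_{Y,11}(h) \simeq r_0 |h|^{2d-1}$ while property \eqref{Property1} forces $\gamma_{Y,12}$ to be odd with $\gamma_{Y,12}(h) \simeq -r_1 \sgn(h) |h|^{2d-1}$, dividing by $2\pi$ immediately yields $f_{Y,11}(\lambda) \sim (r_0/\pi) \Gamma(2d) \cos(\pi d) \lambda^{-2d}$ and $f_{Y,12}(\lambda) \sim i(r_1/\pi) \Gamma(2d) \sin(\pi d) \lambda^{-2d}$, matching \eqref{eq:spectral-asympotitc-Y} and identifying $g_0, g_1$. The main obstacle will be the algebraic collapse in the first half: the Beta contributions must be combined through the reflection identity in precisely the right way so that the coefficient of $a_0^2 + a_1^2$ condenses to $1/\cos(\pi d)$ and the coefficient of $a_0^2 - a_1^2$ condenses to $1$; absent that, the expression stays an opaque combination of $B(d, 1-2d)$ and $B(d,d)$. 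The remaining work is routine, with boundary terms $l = 0, -h$ contributing only $O(h^{d-1})$ corrections absorbed into $\simeq$.
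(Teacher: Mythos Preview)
Your proposal is correct and follows essentially the same route as the paper: both parts rest on the convolution asymptotics and Fourier--Tauberian relations that constitute Propositions~3.1 and~2.1 of \cite{KecPi15}, which the paper simply cites (after computing the products $C^1=A^-(A^-)^T$, $C^2=A^-(A^+)^T$, $C^3=A^+(A^+)^T$) while you unpack them from first principles. The only practical difference is that the paper must track a convention mismatch in the definition of $\gamma_Y(h)$ between this work and \cite{KecPi15}, whereas your direct computation sidesteps that bookkeeping.
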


\begin{proof}
The first part of \eqref{eq:r0-r1-a0-a1} follows from Proposition 3.1 in \cite{KecPi15}. To apply that proposition, we compute
\begin{equation*}
\begin{split}
    C^1 &= A^- (A^-)^T = \begin{pmatrix}
        a_0 & - a_1 \\
        a_1 & a_0 
    \end{pmatrix} \begin{pmatrix}
        a_0 &  a_1 \\
        - a_1 & a_0 
    \end{pmatrix} =
    \begin{pmatrix}
        a_0^2 + a_1^2 & 0\\
        0 & a_0^2 + a_1^2
    \end{pmatrix}, \\
    C^2 &= A^- (A^+)^T = \begin{pmatrix}
        a_0 & - a_1 \\
        a_1 & a_0 
    \end{pmatrix} \begin{pmatrix}
        a_0 &  -a_1 \\
        a_1 & a_0 
    \end{pmatrix} =
    \begin{pmatrix}
        a_0^2 - a_1^2 & -2 a_0 a_1 \\
        2 a_0 a_1  & a_0^2 - a_1^2
    \end{pmatrix}, \\
      C^3 &= A^+ (A^+)^T = \begin{pmatrix}
        a_0 & a_1 \\
        -a_1 & a_0 
    \end{pmatrix} \begin{pmatrix}
        a_0 &  -a_1 \\
        a_1 & a_0 
    \end{pmatrix} =
    \begin{pmatrix}
        a_0^2 + a_1^2 & 0\\
        0 & a_0^2 + a_1^2
    \end{pmatrix}.
\end{split}
\end{equation*}
This leads to
\begin{equation*}
\begin{split}
R_{11} = R_{22} &= \frac{\Gamma^2(d)}{\Gamma(2d)} \left[  2 (a_0^2 + a_1^2) \frac{\sin(\pi d)}{\sin(2 \pi d)} + (a_0^2 - a_1^2) \right] = \frac{\Gamma^2(d)}{\Gamma(2d)} \left[  (a_0^2 + a_1^2) \frac{1}{\cos(\pi d)} + (a_0^2 - a_1^2) \right], \\
R_{12} &= 2 a_0 a_1 \frac{\Gamma^2(d)}{\Gamma(2d)} ,
\end{split}
\end{equation*}
confirming \eqref{eq:r0-r1-a0-a1}, since $R_{11} = r_0$ and $R_{12} = -r_1$. Note that in the above, we have calculated $R_{12}$ from the formula for $R_{21}$ in Proposition 3.1 of \cite{KecPi15}, to account for the different definition of $\gamma_Y(h) = \EE Y_h Y_0^T $ that is used in this work, compared to $\gamma_Y(h) = \EE Y_0 Y_h^T$ in \cite{KecPi15}.

The second part \eqref{eq:g0-g1-r0-r1} follows from Proposition 2.1 in \cite{KecPi15}. In particular, since $R_{11} = R_{22} = r_0$ and $R_{12} = -R_{21} = -r_1$, we obtain that 
\[
    g_0 = g_{11} = g_{22} = \frac{\Gamma(2d)}{\pi}  r_0 \cos(\pi d),
\]
    and
    \[
    i g_1 = g_{12} e^{-i\om} = \frac{\Gamma(2d)}{2\pi}\left( -i(-r_1 - r_1) \sin(\pi d) \right),
    \]
    which yields
    
    \hfill $g_1 =   \frac{\Gamma(2d)}{\pi} r_1 \sin(\pi d)$   \hfill
\end{proof}
\begin{remark}
    The relations in \eqref{eq:r0-r1-a0-a1} can be solved for $a_0,a_1$. By solving for $a_0$ in the second equation in \eqref{eq:r0-r1-a0-a1} and plugging this into the first equation, we obtain a fourth order polynomial in $a_1$ which, by setting $\nu = a_1^2$, reduces to a quadratic of the form
    \[
    \alpha \nu^2 - r_0 \nu + \gamma r_1^2 = 0,
    \]
    where
    \begin{equation} \label{eq:alpha-def}
        \alpha \doteq \frac{\Gamma^2(d)}{\Gamma(2d)} \left( \frac{1}{\cos(\pi d)} - 1 \right), \; \gamma \doteq \frac{\Gamma(2d)}{4\Gamma^2(d)} \left( \frac{1}{\cos(\pi d)} + 1 \right).
    \end{equation}
    This leads to the discriminant
    \begin{equation} \label{eq:discriminant}
        \Delta \doteq r^2_0 - 4 \alpha \gamma r_1^2 = r_0^2 -  \left(  \frac{1}{\cos^2(\pi d)} - 1  \right) r_1^2 = r_0^2 -  \tan^2(\pi d) r_1^2.
     \end{equation}
    Then, for the values of $r_0 , r_1$ for which $\Delta \ge 0$ (i.e., $r_0 \in [-\tan(\pi d) r_1,\tan(\pi d) r_1]$), we can solve the quadratic and obtain the solutions (possibly one) $\nu_{\pm} = \frac{r_0 \pm \sqrt{\Delta}}{2\alpha}$. Since $r_0 > 0$, we have $\nu_+ \ge \nu_- \ge 0$. The solution $\nu_+ = a_1^2$ leads to
    \begin{equation} \label{eq:a0a1-r0r1}
    \begin{split}
            a_{1,+} &= \sqrt{\frac{r_0 + \sqrt{\Delta}}{2\alpha}} = \sqrt{\frac{r_0 + \sqrt{r_0^2 -  \tan^2(\pi d) r_1^2}}{2\frac{\Gamma^2(d)}{\Gamma(2d)} \left( \frac{1}{\cos(\pi d)} - 1 \right)}}, \\
            a_{0,+} &= -\frac{\sqrt{2\alpha} r_1 \Gamma(2d)}{2 \sqrt{r_0 + \sqrt{\Delta}} \Gamma^2(d)} = -\frac{\sqrt{2\Gamma(2d) \left( \frac{1}{\cos(\pi d)} - 1 \right)} r_1 }{2 \sqrt{r_0 + \sqrt{ r_0^2 -  \tan^2(\pi d) r_1^2}} \Gamma(d)}.
     \end{split}
    \end{equation}
    Analogous calculations give $a_{1,-},a_{0,-}$ for the solution $\nu_- = a_1^2$. From now on, we write $a_0$ (resp., $a_1$) to mean $a_{0,+}$ (resp., $a_{1,+}$) for simplicity.
\end{remark}

\begin{proposition} \label{thm:constr}
    Let $\{Y_n\}_{n \in \ZZ}$ be a stationary bivariate LM series satisfying \eqref{Property1} with limiting ACVF satisfying \eqref{eq:acvf-asympotitc-Y}. Let also $\{X_n\}_{n \in \ZZ}$ be the RMod series \eqref{eq:cycl-constr-basic} for some $\lambda_0 \in (0,\pi)$. Then, $\{X_n\}_{n \in \ZZ}$ is second-order stationary, its ACVF satisfies \eqref{eq:acvf-X-model}, and exhibits CLM with the same $\lambda_0 \in (0,\pi)$ and
    \begin{equation} \label{eq:phi-def}
c_\gamma \doteq \sqrt{r_0^2 + r_1^2}, \quad    \phi \doteq \arcsin \left(\frac{-r_1}{\sqrt{r_0^2 + r_1^2}}\right).
\end{equation}
Equivalently, $r_0 = c_\gamma \cos(\phi), r_1 = - c_\gamma \sin(\phi)$.
\end{proposition}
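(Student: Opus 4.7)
The plan is to split the claim into three pieces: (i) second-order stationarity together with the explicit ACVF formula \eqref{eq:acvf-X-model}, (ii) the large-lag asymptotic form of $\gamma_X$, and (iii) identification of the parameters $c_\gamma$ and $\phi$.

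First, I would compute $\EE X_{n+h}X_n$ directly from the definition \eqref{eq:cycl-constr-basic}. Expanding the product gives four terms involving $\gamma_{Y,jk}(h)$ multiplied by products of $\cos$ and $\sin$ of $\lambda_0(n+h)$ and $\lambda_0 n$. Property \eqref{Property1} collapses this to two groups: the $\gamma_{Y,11}$ piece multiplied by $\cos(\lambda_0(n+h))\cos(\lambda_0 n) + \sin(\lambda_0(n+h))\sin(\lambda_0 n)$, and the $\gamma_{Y,12}$ piece multiplied by $\cos(\lambda_0(n+h))\sin(\lambda_0 n) - \sin(\lambda_0(n+h))\cos(\lambda_0 n)$. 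The angle-subtraction identities simplify the first bracket to $\cos(\lambda_0 h)$ and the second to $-\sin(\lambda_0 h)$. The result depends only on $h$, so $X$ is second-order stationary, and we recover \eqref{eq:acvf-X-model} exactly.

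Second, I would feed in the LM asymptotics. By hypothesis $\gamma_{Y,11}(h) \sim r_0 h^{2d-1}$ and $\gamma_{Y,12}(h) \sim -r_1 h^{2d-1}$ from \eqref{eq:acvf-asympotitc-Y}. Substituting into \eqref{eq:acvf-X-model} yields
\begin{equation*}
\gamma_X(h) \simeq \bigl[\,r_0 \cos(\lambda_0 h) + r_1 \sin(\lambda_0 h)\bigr]\,h^{2d-1}, \qquad h \to \infty,
\end{equation*}
which is the chain \eqref{eq:rmod-basic}--\eqref{eq:acvf-X-generic} already sketched in the excerpt.

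Third, I would convert the bracket into a single shifted cosine via the identity $A\cos\theta + B\sin\theta = \sqrt{A^2+B^2}\,\cos(\theta+\phi)$, where $\cos\phi = A/\sqrt{A^2+B^2}$ and $\sin\phi = -B/\sqrt{A^2+B^2}$. Taking $A=r_0$, $B=r_1$ and $\theta = \lambda_0 h$ gives $c_\gamma = \sqrt{r_0^2+r_1^2}$ and $\sin\phi = -r_1/\sqrt{r_0^2+r_1^2}$, which matches \eqref{eq:phi-def}. Since $r_0>0$, we have $\cos\phi = r_0/c_\gamma > 0$, so $\phi \in (-\pi/2,\pi/2)$ and the $\arcsin$ determines $\phi$ unambiguously; the equivalent relations $r_0 = c_\gamma\cos\phi$, $r_1 = -c_\gamma\sin\phi$ follow immediately. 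This delivers \eqref{eq:cycl-ACVF} with the stated $\lambda_0$, $c_\gamma$, $\phi$.

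There is no real obstacle here: the argument is trigonometric bookkeeping built on top of \eqref{Property1} and the bivariate LM asymptotics. The only place where one must be careful is the sign convention. Property \eqref{Property1} carries a minus in $\gamma_{Y,12} = -\gamma_{Y,21}$, and the parametrization \eqref{eq:acvf-asympotitc-Y} encodes $R_{12} = -r_1$; these two signs combine with the $-\sin(\lambda_0 h)$ coefficient produced in step (i) to flip an overall sign, which is what ultimately produces $\arcsin(-r_1/c_\gamma)$ rather than $\arcsin(r_1/c_\gamma)$. I would track these signs explicitly to make sure the final formula reads exactly as in \eqref{eq:phi-def}.
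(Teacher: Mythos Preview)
Your proposal is correct and mirrors the paper's proof essentially step for step: the paper also expands $\EE X_{n+h}X_n$, invokes \eqref{Property1} to reduce to the $\gamma_{Y,11}$ and $\gamma_{Y,12}$ terms, applies the angle-addition identities to obtain \eqref{eq:acvf-X-model}, substitutes the asymptotics from \eqref{eq:acvf-asympotitc-Y}, and then rewrites $r_0\cos(\lambda_0 h)+r_1\sin(\lambda_0 h)$ as $c_\gamma\cos(\lambda_0 h+\phi)$ via the same normalization. Your attention to the sign conventions is exactly the only delicate point, and you have it right.
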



\begin{proof}
For $h,n \in \ZZ$, we have by \eqref{eq:cycl-constr-basic},
\begin{equation} \label{eq:Y-ACVF}
\begin{split}
     \EE X_{n+h} X_{n}  &= \cos(\lambda_0 n) \cos( \lambda_0 (n+h)) \gamma_{Y,11}(h) + \sin(\lambda_0 n) \sin( \lambda_0 (n+h)) \gamma_{Y,22}(h) \\
    &\quad+ \cos(\lambda_0 n) \sin( \lambda_0 (n+h)) \gamma_{Y,21}(h) + \sin(\lambda_0 n) \cos( \lambda_0 (n+h)) \gamma_{Y,12}(h) \\
    &= \gamma_{11}(h) \bigg( \cos(\lambda_0 n) \cos( \lambda_0 (n+h)) + \sin(\lambda_0 n) \sin( \lambda_0 (n+h))   \bigg) \\
    &\quad+ \gamma_{12}(h) \bigg(- \cos(\lambda_0 n) \sin( \lambda_0 (n+h)) + \sin(\lambda_0 n) \cos( \lambda_0 (n+h)) \bigg),
\end{split}
\end{equation}
by using \eqref{Property1}. This implies that
\begin{equation} \label{eq:acvf-X-Y}
\begin{split}
    \gamma_X(h) 
    &= \gamma_{11}(h) \cos(\lambda_0 h) - \gamma_{12}(h) \sin(\lambda_0h) \\
    &\simeq \left[ r_0 \cos(\lambda_0 h ) + r_1 \sin(\lambda_0 h)  \right] h^{2d-1} \\
    &= \sqrt{r_0^2 + r_1^2} \left[ \frac{r_0}{\sqrt{r_0^2 + r_1^2}} \cos(\lambda_0 h) + \frac{r_1}{\sqrt{r_0^2 + r_1^2}} \sin(\lambda_0 h)       \right]h^{2d-1} \\
&= c_\gamma \left[ \cos(\phi) \cos(\lambda_0 h) - \sin(\phi) \sin(\lambda_0 h)       \right]h^{2d-1} \\
    &= c_\gamma \cos(\lambda_0 h + \phi) h^{2d-1},
\end{split}
\end{equation}
where $c_\gamma,\phi$ are defined in \eqref{eq:phi-def}. It thus exhibits CLM with parameters $c_\gamma,\phi$ given in \eqref{eq:phi-def}.
\end{proof}

\begin{remark}\label{r:adm-phi}
    Note that the expression for $\phi$ in \eqref{eq:phi-def} both leads to the admissible set \eqref{eq:admiss-sets-def} and shows that any cyclical phase in the set can be achieved with a suitable choice of $r_0,r_1$. To see these points, substitute $r_0,r_1$ in \eqref{eq:phi-def} by $g_0,g_1$ using the relations in \eqref{eq:g0-g1-r0-r1}. This leads to
    \[
    \phi = \arcsin \left( \frac{-\pi g_1}{\Gamma(2d) \sin(\pi d)} \frac{1}{\sqrt{\frac{\pi^2 g_0^2}{\Gamma(2d)^2 \cos^2(\pi d)} + \frac{\pi^2 g_1^2}{\Gamma^2(2d) \sin^2(\pi d)}}} \right) = \arcsin \left( \frac{-\operatorname{sign}(g_1)}{\sqrt{\left(\frac{g_0}{g_1}\right)^2 \tan^2(\pi d)+1}}  \right).
    \]
    As $|g_1 | \le g_0$ in the spectral domain, the cyclical phase takes its values in any point of the interval
    \[
    \left[ \arcsin\left(-\frac{1}{\sqrt{1+\tan^2(\pi d)}} \right), \arcsin\left(\frac{1}{\sqrt{1+\tan^2(\pi d)}} \right)  \right] = \left[ \arcsin(-\cos(\pi d)), \arcsin( \cos(\pi d))  \right] = \cli_d,
    \]
    where $\cli_d$ is the admissible set in \eqref{eq:admiss-sets-def}. For the boundary points, we have: 
    \begin{equation} \label{eq:borderline-phi-g-r}
        \phi = \pm \left( \frac{1}{2} - d \right) \pi \Leftrightarrow g_1 = \mp g_0 \Leftrightarrow r_0 = \mp \tan(\pi d) r_1.
     \end{equation}
    The relation \eqref{eq:a0a1-r0r1} shows that any admissible phase can be obtained with a suitable choice of $a_0,a_1$. For given $c_\gamma > 0$ and $\phi \in \cli_d$, we have, from Proposition \ref{thm:constr} and \eqref{eq:a0a1-r0r1} that one choice of $a_0,a_1$ leading to $c_\gamma$ and $\phi$ is
    \[
            a_1 = \sqrt{\frac{c_\gamma \cos(\phi) + c_\gamma \sqrt{\cos^2(\phi)-  \tan^2(\pi d) \sin^2(\phi)}}{2\frac{\Gamma^2(d)}{\Gamma(2d)} \left( \frac{1}{\cos(\pi d)} - 1 \right)}}, \quad
            a_0 = -\frac{\sqrt{2 c_\gamma \Gamma(2d) \left( \frac{1}{\cos(\pi d)} - 1 \right)}  \sin(\phi)}{2 \Gamma(d) \sqrt{ \cos(\phi) +  \sqrt{ \cos^2(\phi)-  \tan^2(\pi d) \sin^2(\phi)}} }.
    \]
    In particular, the ``boundary" case $\phi = \pm \left( \frac{1}{2} - d \right) \pi$ in \eqref{eq:borderline-phi-g-r} corresponds to
    \begin{equation} \label{eq:boundary-a0a1}
        \Delta = r^2_1 \left( \frac{\sin^2(\pi d)-1}{\cos^2(\pi d)} + 1 \right) = 0,\quad a_{1,\pm} = \sqrt{\frac{r_0}{2\alpha}}, \quad a_{0,\pm} = \mp \frac{\sqrt{2\alpha} r_1 \Gamma(2d)}{2 \sqrt{r_0} \Gamma^2(d)} = \pm \frac{\sqrt{2\alpha r_0} \Gamma(2d)}{\tan(\pi d) \Gamma^2(d)},
    \end{equation}
    where $\alpha$ was defined in \eqref{eq:alpha-def} and $\Delta$ in \eqref{eq:discriminant}. Note that the $\operatorname{sign}$ of $\phi$ is only differentiated by the form of $a_0$.
    
\end{remark}

\subsection{Parametric Models}
\label{subsec:bivar-constr}

\subsubsection{Capturing CLM effects} \label{subsubsec:clm-effects}

We shall now provide a parametric model for the bivariate LM series considered in Section \ref{subsec:cycl-constr}, that can be used in the RMod construction of CLM series with general cyclical phase. Let $B$ be the usual backward shift operator acting as $B^k Z_n = Z_{n-k}, k \in \ZZ$, e.g., $B^{-1} Z_n = Z_{n+1}$. Set
\begin{equation} \label{eq:D-def-diag}
    D = \text{diag}(d,d) = \begin{pmatrix}
        d & 0 \\
        0 & d
    \end{pmatrix},
\end{equation}
and define a bivariate fractional integration operator $(I- B^{\pm 1})^{-D} = \text{diag} ((I- B^{\pm 1})^{-d},(I- B^{\pm 1})^{-d}),$ where $(I- B^{\pm 1})^{-d} = \sum_{k=0}^{\infty} c_{d,k} B^{\pm k}$ and $c_{d,k}$ are the coefficients in the Taylor expansion
\begin{equation} \label{eq:Taylor-exp}
    (1-x)^{-d} = \sum_{k=0}^\infty c_{d,k} x^k.
\end{equation}
The coefficients $c_{d,k}$ satisfy
\begin{equation} \label{eq:cdk-asympt}
   c_{d,k} \doteq \frac{\Gamma(k+d)}{\Gamma(k+1)\Gamma(d)}\sim \frac{k^{d-1}}{\Gamma(d)}, \quad \text{as }k \to + \infty 
\end{equation}
(e.g., equations (2.4.2) and (2.4.5) in \cite{pipiras_taqqu_2017}).

Take a bivariate WN series $\{\veps_n\}_{n \in \ZZ}$ with the covariance matrix $\EE \veps_n \veps_n^T = I_2$. For the parametric bivariate LM series $Y$, set
\begin{equation} \label{eq:Yn}
    Y_n = (I-B)^{-D} Q_+ \veps_n + (I-B^{-1})^{-D} Q_- \veps_n,
\end{equation}
where
\begin{equation} \label{eq:def-Q+-}
    Q_+ \doteq  \begin{pmatrix}
        q_0 & q_1 \\
       - q_1 & q_0
    \end{pmatrix} \doteq Q_-^T.
\end{equation}
The series $\{Y_n\}$ in \eqref{eq:Yn} is an example of bivariate FARIMA($0,D,0$) series considered in \cite{KecPi15}. By construction, note that the series $Y$ in \eqref{eq:Yn}--\eqref{eq:def-Q+-} has the form of the series $Y$ considered in Section 3.1 with
\begin{equation} \label{eq:parametric-to-a0a1}
    a_0 = \frac{q_0}{\Gamma(d)}, \quad a_1 = \frac{q_1}{\Gamma(d)},
\end{equation}
where we used \eqref{eq:cdk-asympt}. The series $Y$ in \eqref{eq:Yn}--\eqref{eq:def-Q+-} thus satisfies Property \eqref{Property1}. Relations similar to those in Propositions \ref{prop:r0-r1-a0-a1-g0-g1} and \ref{thm:constr} but in terms of $q_0,q_1$ follow by using \eqref{eq:parametric-to-a0a1}. In the following results, we also derive the explicit expressions for the ACVF and spectral density of the parametric model \eqref{eq:Yn}--\eqref{eq:def-Q+-}.

\begin{proposition} \label{thm:Yn-constr}
 Let $Y$ be given by \eqref{eq:Yn}--\eqref{eq:def-Q+-}. Then, $\{Y_n\}_{n \in \ZZ}$ satisfies \eqref{Property1} with, for $h \in \ZZ$,
 \begin{equation} \label{eq:acvf-parametric-finite-lag}
 \begin{split}
     \gamma_{Y,11}(h) &= \gamma_{Y,22}(h) \\
     &= q_0^2 \left( 2 \frac{\Gamma(1-2d) \Gamma(h+d) \sin(\pi d)}{ \Gamma(h+1-d) \pi}+ \frac{\Gamma(2d+h)}{\Gamma(2d) \Gamma(1+h)}  \1_{\{h \ge 0\}} +  \frac{\Gamma(2d-h)}{\Gamma(2d) \Gamma(1-h)} \1_{\{h \le 0\}} \right)  \\
    &\quad+q_1^2 
    \left( 2 \frac{\Gamma(1-2d) \Gamma(h+d) \sin(\pi d)}{ \Gamma(h+1-d) \pi} - \frac{\Gamma(2d+h)}{\Gamma(2d) \Gamma(1+h)}  \1_{\{h \ge 0\}} - \frac{\Gamma(2d-h)}{\Gamma(2d) \Gamma(1-h)} \1_{\{h \le 0\}}  \right),\\
    \gamma_{Y,12}(h) &= - \gamma_{Y,21}(h) = 
        \operatorname{sign}(h)  \frac{2 q_0 q_1}{ \Gamma(2d)} \frac{\Gamma(h+2d)}{\Gamma(1+h)}, 
  \end{split}
 \end{equation}
and has limiting ACVF as in \eqref{eq:acvf-asympotitc-Y} with
    \begin{equation} \label{eq:def-a-b}
     r_0 \doteq \frac{1}{\Gamma(2d)} \left[ q_0^2 \left( \frac{1}{\cos(\pi d)} + 1  \right) + q_1^2 \left( \frac{1}{\cos(\pi d)} - 1  \right) \right] , \quad r_1 \doteq - \frac{2q_0 q_1}{\Gamma(2d)}.
    \end{equation}
\end{proposition}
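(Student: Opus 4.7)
The approach is to decompose $Y_n$ into one-sided fractionally integrated pieces, compute four auto- and cross-covariance matrices separately, and then reassemble. Setting $U_n \doteq (I-B)^{-D}\veps_n$ and $V_n \doteq (I-B^{-1})^{-D}\veps_n$, one has $Y_n = Q_+ U_n + Q_- V_n$, and bilinearity gives
\begin{equation*}
\gamma_Y(h) = Q_+ \gamma_U(h) Q_+^T + Q_+ \gamma_{U,V}(h) Q_-^T + Q_- \gamma_{V,U}(h) Q_+^T + Q_- \gamma_V(h) Q_-^T.
\end{equation*}
The components of $U$ (resp.\ $V$) are uncorrelated standard FARIMA$(0,d,0)$ series, so $\gamma_U(h)=\gamma_V(h)=\gamma_d(h) I_2$ with $\gamma_d(h)=\Gamma(1-2d)\Gamma(h+d)\sin(\pi d)/[\pi\Gamma(h+1-d)]$ the classical scalar FARIMA ACVF.

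Next I would compute the cross term $\gamma_{U,V}(h)=\EE U_h V_0^T$. Because $U_h$ only involves $\veps_m$ with $m\le h$ while $V_0$ only involves $\veps_m$ with $m\ge 0$, the matrix $\gamma_{U,V}(h)$ vanishes for $h<0$, and for $h\ge 0$ the Kronecker structure of $\EE\veps_m\veps_{m'}^T$ forces
\begin{equation*}
\gamma_{U,V}(h) = \Bigl(\sum_{k=0}^h c_{d,h-k}c_{d,k}\Bigr) I_2 = c_{2d,h}\,I_2 = \frac{\Gamma(h+2d)}{\Gamma(h+1)\Gamma(2d)}\,I_2,
\end{equation*}
where the convolution identity $\sum_{k=0}^h c_{d,h-k}c_{d,k}=c_{2d,h}$ is obtained by reading off the coefficient of $x^h$ in $(1-x)^{-d}(1-x)^{-d}=(1-x)^{-2d}$; the matrix $\gamma_{V,U}$ is handled symmetrically. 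Combined with the elementary products $Q_+Q_+^T=Q_-Q_-^T=(q_0^2+q_1^2)I_2$ and $Q_+Q_-^T=Q_+^2$, $Q_-Q_+^T=Q_-^2$, each of which has equal diagonal entries and opposite off-diagonals, substitution into the four-term expansion yields \eqref{eq:acvf-parametric-finite-lag} after separating terms proportional to $q_0^2+q_1^2$, $q_0^2-q_1^2$, and $q_0 q_1$; Property \eqref{Property1} is then manifest in the final form.

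For the asymptotic identification \eqref{eq:def-a-b}, the shortest route is to invoke Proposition \ref{prop:r0-r1-a0-a1-g0-g1} together with the identification $a_0=q_0/\Gamma(d)$, $a_1=q_1/\Gamma(d)$ from \eqref{eq:parametric-to-a0a1}, which gives \eqref{eq:def-a-b} by direct substitution. Alternatively, one can take $h\to\infty$ directly in the explicit ACVF using Stirling to get $\gamma_d(h)\sim\Gamma(1-2d)\sin(\pi d)\pi^{-1}h^{2d-1}$ and $c_{2d,h}\sim h^{2d-1}/\Gamma(2d)$, and then collapse the coefficient of $(q_0^2+q_1^2)$ via the Euler reflection $\Gamma(2d)\Gamma(1-2d)=\pi/\sin(2\pi d)$ and the duplication $\sin(2\pi d)=2\sin(\pi d)\cos(\pi d)$, which simplify $\Gamma(1-2d)\sin(\pi d)/\pi$ to $1/[2\Gamma(2d)\cos(\pi d)]$ and reproduce $r_0,r_1$ in the stated form.

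Nothing is conceptually deep here; the main place requiring some care is the $h=0$ boundary, where both indicators $\1_{\{h\ge 0\}}$ and $\1_{\{h\le 0\}}$ fire and one must verify consistency with $\gamma_Y(0)=\EE Y_0 Y_0^T$, together with tracking the $\operatorname{sign}(h)$ factor in $\gamma_{Y,12}$ so that the stated odd symmetry (and vanishing at $h=0$) holds. The remaining bookkeeping is a routine use of the Vandermonde-type convolution identity for the coefficients $c_{d,k}$.
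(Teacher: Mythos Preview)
Your proposal is correct and takes a somewhat different route from the paper. The paper establishes property \eqref{Property1} by citing Proposition~\ref{thm:linear-repr-biv}, and then obtains the explicit ACVF by invoking Proposition~5.1 of \cite{KecPi15}, which gives general formulas for the ACVF of a two-sided bivariate FARIMA$(0,D,0)$ series in terms of four matrices $b^i=Q_\pm Q_\pm^T$ and four scalar kernels $\gamma_{i,jk}$; the final expressions emerge after substituting $b^1,\dots,b^4$ and simplifying. For the asymptotics the paper also appeals to Proposition~\ref{prop:r0-r1-a0-a1-g0-g1}, exactly as you suggest in your first route.

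Your argument is more self-contained: by writing $Y_n=Q_+U_n+Q_-V_n$ (legitimate since $D=dI_2$ commutes with $Q_\pm$) and computing the four blocks $\gamma_U,\gamma_V,\gamma_{U,V},\gamma_{V,U}$ directly, you bypass the external reference entirely. The diagonal blocks reduce to the scalar FARIMA$(0,d,0)$ ACVF, and the cross blocks collapse via the Cauchy product identity $\sum_{k=0}^h c_{d,k}c_{d,h-k}=c_{2d,h}$, which is exactly the content hidden inside the $\gamma_{2,jk},\gamma_{4,jk}$ terms of \cite{KecPi15}. So what you do is effectively reprove, in this special scalar-$D$ case, the part of \cite{KecPi15} Proposition~5.1 that the paper cites, while the paper simply quotes it. Your approach buys transparency and avoids dependence on the external reference; the paper's approach is shorter on the page given that reference. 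The two are otherwise equivalent in substance.
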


\begin{proof}
The series $\{Y_n\}_{n \in \ZZ}$ satisfies property \eqref{Property1} by Theorem \ref{thm:linear-repr-biv}. We next obtain tractable formulas relating the ACVF of $Y$ with the parameters $q_0,q_1$. Consider
\begin{equation} \label{eq:covar-Y-b}
\begin{split}
b^1 &\doteq Q_- Q_-^T =
\begin{pmatrix}
q_0^2+q_1^2 & 0 \\
0 & q_0^2+q_1^2
\end{pmatrix} , \quad 
b^2 \doteq Q_- Q_+^T =
\begin{pmatrix}
q_0^2-q_1^2 & -2q_0 q_1  \\
2q_0 q_1 & q_0^2-q_1^2
\end{pmatrix},
\\
b^3 &\doteq Q_+ Q_+^T =
\begin{pmatrix}
q_0^2+q_1^2 & 0 \\
0 & q_0^2+q_1^2
\end{pmatrix}, \quad
b^4 \doteq Q_+ Q_-^T =
\begin{pmatrix}
q_0^2-q_1^2 & 2q_0 q_1  \\
-2q_0 q_1 & q_0^2-q_1^2
\end{pmatrix}.
\end{split}
\end{equation}
Observe that, in view of \eqref{eq:D-def-diag}, we have that $\gamma_{1,kj} (h) = \gamma_{3,kj}(h)$ and $\gamma_{4,kj} (h) = \gamma_{2,kj}(-h),k,j=1,2$ for all $h \in \ZZ $, where $\gamma_{l,kj},l=1,\dots,4$ and $k,j=1,2$ are defined in the relations (72), Proposition 5.1 of \cite{KecPi15}. See also Proposition 9.4.3 in \cite{pipiras_taqqu_2017}. Then, from the same proposition,
\begin{equation} \label{eq:gamma11Y}
\begin{split}
    \gamma_{Y,11}(h) &= \frac{1}{2\pi} \left[ b^1_{11} \gamma_{1,11}(h) + b^2_{11} \gamma_{2,11}(h) + b^3_{11} \gamma_{3,11}(h) + b^4_{11} \gamma_{4,11}(h)\right] \\
    &= \frac{1}{2\pi} \left[ 2(q_0^2 + q_1^2) \gamma_{1,11}(h) + (q_0^2 - q_1^2)\left(\gamma_{2,11}(h) \1_{\{h \le 0\}} +  \gamma_{4,11}(h) \1_{\{h \ge 0\}} \right)  \right] \\
    &= q_0^2 \left( 2 \frac{\Gamma(1-2d) \Gamma(h+d) \sin(\pi d)}{ \Gamma(h+1-d) \pi}+ \frac{\Gamma(2d+h)}{\Gamma(2d) \Gamma(1+h)}  \1_{\{h \ge 0\}} + \frac{\Gamma(2d-h)}{\Gamma(2d) \Gamma(1-h)} \1_{\{h \le 0\}} \right)  \\
    &\quad+q_1^2 
    \left( 2 \frac{\Gamma(1-2d) \Gamma(h+d) \sin(\pi d)}{ \Gamma(h+1-d) \pi} - \frac{\Gamma(2d+h)}{\Gamma(2d) \Gamma(1+h)}  \1_{\{h \ge 0\}} - \frac{\Gamma(2d-h)}{\Gamma(2d) \Gamma(1-h)} \1_{\{h \le 0\}}  \right),
\end{split}
\end{equation}
where for the second line we used $b^1_{11} = b^3_{11} = q_0^2+q_1^2$, $b^2_{11} = b^4_{11} = q_0^2-q_1^2$, and the properties for $\gamma$ above, and for the third line we used the form of $\gamma_{1,11},\gamma_{2,11}$. Similarly, on noticing that $b^1_{22} =b^3_{22} =  q_0^2+q_1^2$, $b^2_{22} =b^4_{22} =  q_0^2-q_1^2$, and that $\gamma_{i,11}(h) = \gamma_{i,22}(h)$ for all $h$ and $i=1,2,3,4$, we immediately obtain that
\[
\gamma_{Y,22} (h) = \gamma_{Y,11} (h), \quad \text{for all} \; h \in \ZZ.
\]
Next,
\begin{equation} \label{eq:gamma12Y}
\begin{split}
    \gamma_{Y,12}(h) &= \frac{1}{2\pi} \left[ b^1_{12} \gamma_{1,12}(h) + b^2_{12} \gamma_{2,12}(h) + b^3_{12} \gamma_{3,12}(h) + b^4_{12} \gamma_{4,12}(h)\right] \\
    &= \frac{2q_0q_1}{2 \pi} \left[ - \gamma_{2,12}(h) +  \gamma_{2,12}(-h)  \right] \\
    &= \text{sign}(h)  \frac{2 q_0 q_1}{ \Gamma(2d)} \frac{\Gamma(h+2d)}{\Gamma(1+h)},
\end{split}
\end{equation}
and analogous calculations show that, for each $h \in \ZZ$,
\[
\gamma_{Y,21}(h) = \frac{2q_0q_1}{2 \pi} \left[  \gamma_{2,21}(h) -  \gamma_{2,21}(-h)  \right] = - \gamma_{Y,12}(h).
\]

The following asymptotics follow from \eqref{eq:r0-r1-a0-a1} in Proporistion \ref{prop:r0-r1-a0-a1-g0-g1}: As $h \to + \infty$ (and in particular for $h \ge 0$),
\begin{equation} \label{eq:gamma-asymptotics-first order}
\begin{split}
\gamma_{Y,11}(h) &\sim  \frac{1}{2\pi} \left[ 4 (q_0^2+q_1^2) \Gamma(1-2d) \sin (d\pi)  + 2 (q_0^2-q_1^2)  \frac{\pi}{\Gamma(2d)}   \right]h^{2d-1}, \\
\gamma_{Y,12}(h) &\sim  \frac{2q_0 q_1}{\Gamma(2d)} h^{2d-1}.
\end{split}
\end{equation}
\end{proof}

\begin{proposition} \label{prop:spec-dens-param}
   Let $Y$ be given by \eqref{eq:Yn}--\eqref{eq:def-Q+-}. Then, $\{Y_n\}_{n \in \ZZ}$ satisfies \eqref{eq:property-spec} with, for $\lambda > 0$,
 \begin{equation} \label{eq:param-spectral}
 \begin{split}
  f_{Y,11}(\lambda) &= \frac{2^{-2d} \sin^{-2d} \left( \frac{\lambda}{2}  \right)}{\pi}  \left[ (q_0^2 - q_1^2) \cos\left((\lambda - \pi) d\right) + (q_0^2+q_1^2)   \right], \\
 f_{Y,12}(\lambda) &=  \frac{i}{\pi} \left[  q_0 q_1  2^{1-2d} \sin^{-2d}\left( \frac{\lambda}{2} \right) \sin\left((\lambda - \pi) d \right)   \right],
  \end{split}
 \end{equation}
 and has the limiting spectral decomposition as in \eqref{eq:spectral-asympotitc-Y} with
 \begin{equation} \label{eq:g0-g1-limiting-model}
     g_0 = \frac{1}{\pi} \left[ q_0^2(1 + \cos(\pi d) ) + q_1^2 (1 - \cos(\pi d))    \right], \quad g_1 = - 2\frac{\sin(\pi d)}{\pi}  q_0 q_1.
 \end{equation} 
\end{proposition}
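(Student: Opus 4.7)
My approach would be to compute $f_Y$ directly from the linear representation \eqref{eq:Yn}--\eqref{eq:def-Q+-} via its transfer function, rather than trying to Fourier-transform the closed-form ACVF derived in Proposition \ref{thm:Yn-constr}. Since $Y_n = (I-B)^{-D} Q_+ \veps_n + (I-B^{-1})^{-D} Q_- \veps_n$ with $\EE \veps_n \veps_n^T = I_2$, the spectral density is $f_Y(\lambda) = (2\pi)^{-1} H(\lambda) H(\lambda)^*$ where
\[
H(\lambda) = (1-e^{-i\lambda})^{-D} Q_+ + (1-e^{i\lambda})^{-D} Q_-.
\]
Because $D = d I_2$ is scalar, the fractional factors commute with everything, and the standard identity $1-e^{\mp i\lambda} = 2\sin(\lambda/2)\, e^{\pm i(\pi-\lambda)/2}$ (valid for $\lambda \in (0,\pi)$) gives $(1-e^{\mp i\lambda})^{-d} = (2\sin(\lambda/2))^{-d} e^{\mp i\alpha}$ with $\alpha \doteq d(\pi-\lambda)/2$. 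Hence $H(\lambda) = (2\sin(\lambda/2))^{-d}[e^{-i\alpha} Q_+ + e^{i\alpha} Q_-]$.

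Next I would expand $H(\lambda) H(\lambda)^*$ using $Q_+^* = Q_+^T = Q_-$, which produces four matrix products: $Q_+ Q_-$ and $Q_- Q_+$ (both equal to $(q_0^2+q_1^2)I_2$), and $Q_+^2$, $Q_-^2$ (each of the form $(q_0^2-q_1^2)I_2 \pm 2 q_0 q_1 J$ for a fixed antisymmetric $J$). Grouping the exponential factors via $e^{\pm 2i\alpha} = \cos(2\alpha) \pm i \sin(2\alpha)$, the diagonal entries collapse to $2[(q_0^2+q_1^2) + \cos(d(\pi-\lambda))(q_0^2-q_1^2)]$ and the off-diagonal entries to $\mp 4i \sin(d(\pi-\lambda)) q_0 q_1$. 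Dividing by $2\pi$, rewriting $\cos(d(\pi-\lambda)) = \cos((\lambda-\pi)d)$ and $\sin(d(\pi-\lambda)) = -\sin((\lambda-\pi)d)$, yields exactly \eqref{eq:param-spectral}. As a byproduct, the diagonal equality $f_{Y,11}=f_{Y,22}$ and the antisymmetry $f_{Y,12}=-f_{Y,21}$ are visible from the expressions, confirming \eqref{eq:property-spec} (which we already know from Proposition \ref{thm:Yn-constr} together with Proposition \ref{prop:spectral-property}).

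For the limiting form \eqref{eq:g0-g1-limiting-model}, I let $\lambda\to 0^+$. Using $2\sin(\lambda/2) \sim \lambda$, so $2^{-2d}\sin^{-2d}(\lambda/2) \sim \lambda^{-2d}$, and evaluating $\cos((\lambda-\pi)d) \to \cos(\pi d)$ and $\sin((\lambda-\pi)d) \to -\sin(\pi d)$, the expression for $f_{Y,11}$ gives the claimed $g_0$, and the expression for $f_{Y,12}$ gives $i g_1 \lambda^{-2d}$ with $g_1 = -2\sin(\pi d) q_0 q_1/\pi$.

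The only mildly technical step is handling the complex exponential $(1-e^{\mp i\lambda})^{-d}$: one must choose the principal branch consistently so that the polar representation $2\sin(\lambda/2)e^{\pm i(\pi-\lambda)/2}$ is valid on $(0,\pi)$ and the exponent $-d$ acts as expected. Once this is pinned down, the remaining work is bookkeeping with $2\times 2$ matrices, and cross-checking against Proposition \ref{prop:r0-r1-a0-a1-g0-g1} (via \eqref{eq:parametric-to-a0a1} and \eqref{eq:def-a-b}) provides an independent verification of $g_0, g_1$.
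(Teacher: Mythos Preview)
Your proposal is correct and follows essentially the same transfer-function approach as the paper: both write $f_Y(\lambda)=(2\pi)^{-1}G(\lambda)G(\lambda)^*$ with $G(\lambda)=(1-e^{-i\lambda})^{-D}Q_+ + (1-e^{i\lambda})^{-D}Q_-$ and exploit the polar form of $(1-e^{\pm i\lambda})^{-d}$. The only cosmetic differences are that the paper expands entry by entry (computing $g_j(\lambda)g_k^*(\lambda)$ from the rows of $G$) rather than via the matrix products $Q_\pm Q_\mp$, $Q_\pm^2$ as you do, and that the paper obtains \eqref{eq:g0-g1-limiting-model} by invoking \eqref{eq:def-a-b} and \eqref{eq:g0-g1-r0-r1} rather than by taking the direct limit $\lambda\to 0^+$ as you propose.
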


\begin{proof}
    The spectral density matrix is given by
    \[
    f(\lambda) = \frac{1}{2\pi} G(\lambda) G^*(\lambda),
    \]
    where $G(\lambda) = (1 - e^{-i\lambda})^{-D} Q_+ + (1 - e^{i\lambda})^{-D} Q_-$. Then, by denoting $g_j(\lambda)$ the $j$th row of $G(\lambda)$, we can write the $(j,k)$ component as
    \[
    f_{jk}(\lambda) = \frac{1}{2\pi} g_j (\lambda) g^*_k(\lambda).
    \]
    In particular, since $g_1(\lambda) = ( (1-e^{-i\lambda})^{-d}q_0 +(1-e^{i\lambda})^{-d}q_0, (1-e^{-i\lambda})^{-d}q_1 -(1-e^{i\lambda})^{-d}q_1)$, we get that 
    \begin{equation}
    \begin{split}
            f_{11}(\lambda) &= \frac{1}{2\pi} g_1(\lambda) g_1^*(\lambda) \\
            &= \frac{1}{2\pi} \left[ q_0^2 \left( (1-e^{-i\lambda})^{-d}+ (1-e^{i\lambda})^{-d} \right)^2 - q_1^2 \left( (1-e^{i\lambda})^{-d} - (1-e^{-i\lambda})^{-d}  \right)^2  \right] \\
            &= \frac{1}{2\pi} \left[ \left( q_0^2 - q_1^2 \right) (1-e^{-i\lambda})^{-2d} + \left( q_0^2 - q_1^2 \right) (1-e^{i\lambda})^{-2d} + 2 \left( q_0^2 + q_1^2 \right)  (2 - 2 \cos(\lambda))^{-d}  \right] \\
            &= f_{22}(\lambda).
    \end{split}
    \end{equation}
    Upon recalling the identities $(1 - e^{\pm i\lambda})^{-2d} = 2^{-2d} \sin^{-2d}\left( \frac{\lambda}{2}  \right) e^{\mp i d (\lambda - \pi) }$ (e.g., (9.4.25) in \cite{pipiras_taqqu_2017}) and $(1 -  \cos(\lambda))^{-d} = 2^{-d} \sin^{-2d} \left( \frac{\lambda}{2}  \right)$, we obtain 
    \begin{equation}
    \begin{split}
            f_{11}(\lambda)  &= \frac{1}{\pi} \left[ \left( q_0^2 - q_1^2 \right) \left( 2^{-2d} \sin^{-2d}\left( \frac{\lambda}{2} \right) \cos\left((\lambda - \pi) d\right)   \right) + 2^{-2d} \left( q_0^2 + q_1^2 \right)  \sin^{-2d} \left( \frac{\lambda}{2}  \right) \right] \\
            &= \frac{2^{-2d} \sin^{-2d} \left( \frac{\lambda}{2}  \right)}{\pi}  \left[ (q_0^2 - q_1^2) \cos\left((\lambda - \pi) d\right) + (q_0^2+q_1^2)   \right].
    \end{split}
    \end{equation}
    Moreover, since in addition $g_2(\lambda) = ( -(1-e^{-i\lambda})^{-d}q_1 +(1-e^{i\lambda})^{-d}q_1, (1-e^{-i\lambda})^{-d}q_0 +(1-e^{i\lambda})^{-d}q_0)$,
     \begin{equation}
     \begin{split}
            f_{12}(\lambda) &= \frac{1}{2\pi} g_1(\lambda) g_2^*(\lambda) \\
            &= \frac{1}{2\pi} \left[ 2 q_0 q_1 \left( (1-e^{-i\lambda})^{-2d} - (1-e^{i\lambda})^{-2d} \right)   \right] \\
            &= \frac{i}{\pi} \left[  q_0 q_1 2^{1-2d} \sin^{-2d}\left( \frac{\lambda}{2} \right) \sin\left((\lambda - \pi) d \right)  \right] \\
            &= - f_{21}(\lambda).
    \end{split}
    \end{equation}
    The relation \eqref{eq:g0-g1-limiting-model} is immediate upon recalling $r_0,r_1$ from \eqref{eq:def-a-b} and the formulas \eqref{eq:g0-g1-r0-r1} for $g_0,g_1$ of Proposition \ref{prop:r0-r1-a0-a1-g0-g1}.
\end{proof}

\subsubsection{``Boundary" case} \label{subsubsec:boundary}

Note that in the ``boundary" case $\phi = \pm \left( \frac{1}{2} - d \right) \pi$, since $r_0 = \mp \tan(\pi d) r_1$ by \eqref{eq:borderline-phi-g-r} and in view of \eqref{eq:def-a-b}, we obtain 
\begin{equation}
    \left( q_0 \sqrt{\frac{1}{\cos(\pi d)} + 1} \mp q_1 \sqrt{\frac{1}{\cos(\pi d)} - 1}    \right)^2 =0,
\end{equation}
corresponding to
\begin{equation}\label{eq:q0-boundary}
    q_0 = \pm \sqrt{\frac{1 - \cos(\pi d)}{1 + \cos(\pi d)}} q_1 = \pm \frac{\sin(\pi d)}{1 + \cos(\pi d)} q_1.
\end{equation}
Therefore, in the ``boundary" case,
 \begin{equation} \label{eq:q_0^2+q_1^2}
     q_0^2 - q_1^2 = q_1^2 \left( \frac{1 - \cos(\pi d)}{1 + \cos(\pi d)} - 1  \right) = - \frac{2 \cos(\pi d)}{1+ \cos(\pi d)} q_1^2, \quad q_0^2 + q_1^2 = q_1^2 \frac{2}{1 + \cos(\pi d)}.
 \end{equation}
Due to \eqref{eq:acvf-parametric-finite-lag} and \eqref{eq:q_0^2+q_1^2}, we have, for $h \in \ZZ$,
 \begin{equation} \label{eq:acvf-parametric-finite-lag-borderline}
 \begin{split}
     \gamma_{Y,11}(h) &=  q_1^2 \left( \frac{4 \Gamma(1-2d) \sin(\pi d)}{1 + \cos(\pi d)}  \frac{\Gamma(h+d) }{ \Gamma(h+1-d) \pi} - \frac{2 \cos(\pi d)}{1+ \cos(\pi d)}\frac{\Gamma(2d+h)}{\Gamma(2d) \Gamma(1+h)}  \1_{\{h \ge 0\}} \right. \\
     &\hspace{4cm}\left. - \frac{2 \cos(\pi d)}{1+ \cos(\pi d)}  \frac{\Gamma(2d-h)}{\Gamma(2d) \Gamma(1-h)} \1_{\{h \le 0\}} \right),  \\
    \gamma_{Y,12}(h) &=  
       \pm \text{sign}(h) \sqrt{\frac{1 - \cos(\pi d)}{1 + \cos(\pi d)}}  \frac{ 2 q^2_1}{ \Gamma(2d)} \frac{\Gamma(h+2d)}{\Gamma(1+h)}.
  \end{split}
 \end{equation}
The relations in \eqref{eq:q_0^2+q_1^2} lead to the following spectral densities in \eqref{eq:param-spectral} for $\lambda > 0$,
 \begin{equation} \label{eq:spec-Y-boundary}
 \begin{split}
 f_{Y,11}(\lambda) &= \frac{2^{1-2d} \sin^{-2d}\left( \frac{\lambda}{2} \right) q_1^2}{\pi} \frac{1 - \cos(\pi d) \cos\left((\lambda - \pi) d \right)}{1 + \cos(\pi d)},
  \\
 f_{Y,12}(\lambda) &= \pm i \frac{2^{1-2d} \sin^{-2d}\left( \frac{\lambda}{2}\right) q_1^2}{\pi} \frac{\sin(\pi d) \sin\left((\lambda - \pi) d \right)}{1 + \cos(\pi d)}  .
  \end{split}
 \end{equation}
 
We now investigate further the spectral density of the parametric RMod series in the ``boundary" case. This will be useful in Section \ref{subsec:asymm-mem-par} below. First, let $r_0 =\tan(\pi d) r_1$ (i.e., $\phi = -\left( \frac{1}{2} - d \right) \pi$), corresponding to  the case 
\begin{equation} \label{eq:fy-12-bound}
    f_{Y,12}(\lambda) =  -i \frac{2^{1-2d} \sin^{-2d}\left( \frac{\lambda}{2}\right) q_1^2}{\pi} \frac{\sin(\pi d) \sin\left((\lambda - \pi) d \right)}{1 + \cos(\pi d)}. 
\end{equation}
By combining \eqref{eq:f-X-from-Y-general} with \eqref{eq:spec-Y-boundary}, we have, as $\lambda \to \lambda_0^+$,
\begin{equation} \label{eq:666}
\begin{split}
    &\quad\frac{1}{2} f_{Y,11} (\lambda - \lambda_0) - \frac{1}{2i} f_{Y,12} (\lambda - \lambda_0) \\
    &= \frac{2^{-2d} \sin^{-2d}\left( \frac{\lambda - \lambda_0 }{2} \right) q_1^2}{\pi} \frac{1 - \cos(\pi d) \cos\left((\lambda - \lambda_0 - \pi) d \right) + \sin(\pi d) \sin\left((\lambda - \lambda_0 - \pi) d \right)}{1 + \cos(\pi d)} \\
    &= \frac{2^{-2d} \sin^{-2d}\left( \frac{\lambda - \lambda_0}{2} \right) q_1^2}{\pi ( 1 + \cos(\pi d) )} \left( 1 - \cos\left( (\lambda - \lambda_0 ) d \right) \right) \\
    &\sim  \frac{d^2}{2\pi} \frac{q_1^2}{(1 + \cos(\pi d))} \left( \lambda - \lambda_0 \right)^{-2d+2} = o(1), 
\end{split}
\end{equation}
where for the last line we have used the asymptotic relations
\[
\sin^{-2d}\left(\frac{\lambda - \lambda_0}{2}\right) \sim \left(\frac{\lambda - \lambda_0}{2}\right)^{-2d}, \quad 1 - \cos((\lambda - \lambda_0) d ) \sim \frac{d^2}{2}(\lambda - \lambda_0)^2 , \quad \text{as } \lambda \to \lambda_0^+.
\]
In addition, as $\lambda \to \lambda_0^+$,
\begin{equation}
\begin{split}
     \frac{1}{2} f_{Y,11} (\lambda + \lambda_0) + \frac{1}{2i} f_{Y,12} (\lambda + \lambda_0) &= \frac{2^{-2d} \sin^{-2d}\left( \frac{\lambda + \lambda_0}{2} \right) q_1^2}{\pi ( 1 + \cos(\pi d) )} \left( 1 - \cos\left( (2\pi  -(\lambda + \lambda_0 ) ) d \right) \right) \\
    &\sim  \frac{2^{-2d} \sin^{-2d}\left( \lambda_0 \right) }{\pi ( 1 + \cos(\pi d) )} \left( 1 - \cos\left( 2(\pi -\lambda_0 )d \right) \right) q_1^2 = O(1).
\end{split}
\end{equation}
This shows that, from \eqref{eq:f-X-from-Y-general},
\begin{equation} \label{eq:667}
\begin{split}
   f_X(\lambda) &=  \left[ \frac{1}{2} f_{Y,11} (\lambda - \lambda_0) - \frac{1}{2i} f_{Y,12} (\lambda - \lambda_0) + \frac{1}{2} f_{Y,11} (\lambda + \lambda_0) + \frac{1}{2i} f_{Y,12} (\lambda + \lambda_0)
  \right] \\
  &\sim \frac{2^{-2d} \sin^{-2d}\left( \lambda_0 \right) }{\pi ( 1 + \cos(\pi d) )} \left( 1 - \cos\left( 2(\pi -\lambda_0 )d \right) \right) q_1^2 = O(1), \quad \text{as }\lambda \to \lambda_0^+. 
\end{split}
\end{equation}
Thus, $c_f^+ = 0$ in \eqref{eq:cyclical-spectrum}. When $r_0 =\tan(\pi d) r_1$ and $\lambda < 0$,
\begin{equation}
f_{Y,12}(\lambda) = f_{Y,21}(-\lambda) = - f_{Y,12}(-\lambda),
\end{equation}
which implies from \eqref{eq:fy-12-bound} that, for $\lambda < 0$,
\[
f_{Y,12}(\lambda) =  i \frac{2^{1-2d} \sin^{-2d}\left( \frac{-\lambda}{2}\right) q_1^2}{\pi} \frac{\sin(\pi d) \sin\left((-\lambda - \pi) d \right)}{1 + \cos(\pi d)},
\]
and similarly $f_{Y,11}(\lambda) = f_{Y,11}(-\lambda)$. Then, as $\lambda \to \lambda_0^-$,
\begin{equation} \label{eq:668}
\begin{split}
    f_X(\lambda) &\sim \frac{2^{-2d} \sin^{-2d}\left( \frac{\lambda_0 - \lambda }{2} \right) q_1^2}{\pi} \frac{1 - \cos(\pi d) \cos\left((\lambda_0 - \lambda - \pi) d \right) - \sin(\pi d) \sin\left((\lambda_0 - \lambda - \pi) d \right)}{1 + \cos(\pi d)} \\
    &= \frac{2^{-2d} \sin^{-2d}\left( \frac{\lambda_0 - \lambda}{2} \right) q_1^2}{\pi ( 1 + \cos(\pi d) )} \left( 1 - \cos\left( 2\pi d - (\lambda_0 - \lambda ) d \right) \right) \\
    &\sim  \frac{  q_1^2 (1 - \cos(2 \pi d))}{\pi ( 1 + \cos(\pi d) )} \left( \lambda_0 - \lambda \right)^{-2d} \doteq c_f^- \left( \lambda_0 - \lambda \right)^{-2d}.
\end{split}
\end{equation}
Thus, \eqref{eq:cyclical-spectrum} also holds with $c_f^- > 0$ (assuming $q_1 \neq 0$).

Finally, we note that in the case $r_0 = \tan(\pi d) r_1$ (i.e., $\phi = \left(1-d\right)\pi$), we can reverse the calculations as $\lambda \to \lambda_0^+, \; \lambda \to \lambda_0^-$, yielding
\begin{equation} \label{eq:bound-case-spec-dens}
\begin{split}
    f_X(\lambda) &\sim  \frac{ q_1^2  (1 - \cos(2 \pi d))}{\pi ( 1 + \cos(\pi d) )} \left( \lambda - \lambda_0 \right)^{-2d} \doteq c_f^+ \left( \lambda - \lambda_0 \right)^{-2d}, \quad \text{as } \lambda \to \lambda_0^+, \\
    f_X(\lambda) &\sim  \frac{2^{-2d} \sin^{-2d}\left( \lambda_0 \right) }{\pi ( 1 + \cos(\pi d) )} \left( 1 - \cos\left( 2(\pi - \lambda_0) d \right) \right) q_1^2, \quad \text{as } \lambda \to \lambda_0^- .
\end{split}
\end{equation}

\subsubsection{Adding short memory effects}
\label{subsubsec:short-range-effects}

The parametric model for the bivariate LM series $Y$ in \eqref{eq:Yn}--\eqref{eq:def-Q+-} and the resulting RMod series $X$ in \eqref{eq:cycl-constr-basic} were constructed in Section \ref{subsubsec:clm-effects} to capture the general form of CLM, which is the asymptotic notion in the time and spectral domains having to do with long memory. Short memory effects can be added to the model in a standard way. For this purpose, it is convenient to introduce the following terminology and models akin to fractionally integrated ARMA (AutoRegressive Moving Average) models. Let
\begin{align}
    \Phi(z) = 1 - \phi_1 z - \dots - \phi_p z^p, \\
    \Theta(z) = 1 + \theta_1 z + \dots + \theta_q z^q,
\end{align}
be the AR and MA polynomials of orders $p,q \in \NN_0$. Assume their roots are outside the unit disc.

\begin{defn}\label{d:model}
    The RMod series in \eqref{eq:cycl-constr-basic} constructed from the bivariate LM series in \eqref{eq:Yn}--\eqref{eq:def-Q+-} will be called fractional RMod series and denoted as FRMod$(0,d,0)$. The second-order stationary series $X$ will be denoted FRMod$(p,d,q)$ and called (cyclical) fractional RMod series of orders $p,d,q$ if the series
    \begin{equation}
        \Theta^{-1}(B) \Phi(B) X_n
    \end{equation}
    is FRMod$(0,d,0)$.
\end{defn}

When $p = 0$, the FRMod$(0,d,q)$ series can be written as
\begin{equation} \label{eq:frmod-0dq}
    X_n = \Theta(B) \tilde X_n = \tilde X_n + \Theta_1 \tilde X_{n-1} + \dots + \Theta_q \tilde X_{n-q},
\end{equation}
where $\tilde X$ is FRMod$(0,d,0)$. Since the ACVF of $\tilde X$ can be represented explicitly, the same holds for the series \eqref{eq:frmod-0dq}. The polynomial $\Theta(B)$ allows modeling the ACVF for smaller lags and the spectral density away from the divergence around the cyclical phase $\lambda_0$. When $q = 0$, the FRMod$(p,d,0)$ model can be fitted in practice since 
\begin{equation}
    \Phi(B) X_n = X_n - \phi_1 X_{n-1} - \dots - \phi_p X_{n-p}
\end{equation}
is FRMod$(0,d,0)$ with a known ACVF.


\subsection{Insights From Random Modulation}
\label{subsec:RM-insights}

In Sections \ref{subsec:cycl-constr} and \ref{subsec:bivar-constr}, we constructed CLM models by random modulation of a bivariate LM series satisfying certain ACVF properties. This raises the question: Is the converse procedure also possible, i.e., given a CLM series with cyclical frequency $\lambda_0$, can it be represented as a random modulation of a LM bivariate series $(Y_{1,n},Y_{2,n})$? The answer to this question will follow from general developments for RMod series as, for example, in \cite{Pap83}.

Thus, let $\{X_{1,n}\}$ be a CLM series. Assume $\{X_{2,n}\}$ is another series such that the bivariate series $\{(X_{1,n},X_{2,n})\}$ satisfies property \eqref{Property1}. Examples of the series $\{X_{2,n}\}$ are given below. Set
\begin{equation} \label{eq:converse-construction}
\begin{split}
    Y_{1,n} &= \cos(\lambda_0 n) X_{1,n} - \sin(\lambda_0 n) X_{2,n}, \\
    Y_{2,n} &= \sin(\lambda_0 n) X_{1,n} + \cos(\lambda_0 n) X_{2,n},
\end{split}
\end{equation}
which is equivalent to
\begin{equation} \label{eq:converse-construction-X-from-Y}
\begin{split}
    X_{1,n} &= \cos(\lambda_0 n) Y_{1,n} + \sin(\lambda_0 n) Y_{2,n}, \\
    X_{2,n} &= - \sin(\lambda_0 n) Y_{1,n} + \cos(\lambda_0 n) Y_{2,n}.
\end{split}
\end{equation}
If $\{(X_{1,n},X_{2,n})\}$ satisfies property \eqref{Property1}, one can check that the bivariate series $\{Y_n\} = \{(Y_{1,n},Y_{2,n})\}$ is second-order stationary (e.g., \cite{Pap83}). The proposition below will show that the bivariate series $\{Y_n\}$ is LM when $\{X_{1,n}\}$, $\{X_{2,n}\}$ are CLM, that is, in view of the first equation in \eqref{eq:converse-construction}, answering the question above in the affirmative.

Different choices of the series $\{X_{2,n}\}$ above are available in the random modulation literature and lead to different bivariate series. Popular choices are:
\begin{itemize}
    \item Uncorrelated case: Take $\{X_{2,n}\}$ uncorrelated with $\{X_{1,n}\}$. In this casee, the ACVF of $\{(X_{1,n},X_{2,n})\}$ is: for all $h \in \ZZ$,
    \begin{equation} \label{eq:uncorr-insights}
        \gamma_{X,22}(h) = \gamma_{X,11}(h), \quad \gamma_{X,12}(h) = 0.
    \end{equation}
    Similarly, the spectral density satisfies $f_{X,22}(\lambda) = f_{X,11}(\lambda)$ and $f_{X,12}(\lambda) = 0$.

    \item Hilbert transform case: Take $\{X_{2,n}\}$ to be the Hilbert transform of $\{X_{1,n}\}$. In this case, the spectral density of $\{(X_{1,n},X_{2,n})\}$ satisfies
    \begin{equation} \label{eq:hilb-transf-spec}
        f_{X,22}(\lambda) = f_{X,11}(\lambda),\;  f_{X,12}(\lambda) = - f_{X,21} (\lambda) = i f_{X,11}(\lambda) \text{sign}(\lambda).
    \end{equation}
    We also note that in the random modulation literature, $\lambda_0$ in \eqref{eq:converse-construction} and \eqref{eq:converse-construction-X-from-Y} is a free parameter. Here, we fix it to be equal to the cyclical frequency. The representation of $\{X_{1,n}\}_{n \in \ZZ}$ in \eqref{eq:converse-construction-X-from-Y} with $\{X_{2,n}\}_{n \in \ZZ}$ as the Hilbert transform of $\{X_{1,n}\}_{n \in \ZZ}$ is referred to as \textit{Rice's representation}. 
\end{itemize}


\begin{proposition}
Let $\{X_{1,n}\}$ be a CLM series satisfying \eqref{eq:cyclical-spectrum} and let $\{X_{2,n}\}$ be the series constructed in the uncorrelated or Hilbert transform case above. Let also $\{Y_n \}_{n \in \ZZ} = \{(Y_{1,n},Y_{2,n})\}_{n \in \ZZ}$ be given by \eqref{eq:converse-construction}. Then, $\{Y_n\}_{n \in \ZZ}$ is a bivariate LM series satisfying \eqref{eq:biv-spectral-general} with \eqref{eq:spectral-asympotitc-Y}. Furthermore, we have:
\begin{itemize}
    \item Uncorrelated case: 
    \begin{equation} \label{eq:g0g1-insights-uncorr}
        g_0 = \frac{1}{2} \left( c_f^+ + c_f^- \right), \; g_1 = \frac{1}{2} \left( c_f^- - c_f^+   \right).
    \end{equation}
    \item Hilbert transform case:
    \begin{equation} \label{eq:g0g1-insight-hilbert}
          g_0 =  c_f^+ + c_f^-  , \; g_1 = c_f^- - c_f^+ .
    \end{equation}
\end{itemize}
\end{proposition}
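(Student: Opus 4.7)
The plan is to mirror the logic behind Proposition \ref{prop:spectral-property}: first translate \eqref{eq:converse-construction} into relations between the ACVFs of $Y$ and $X$, then pass to the spectral domain, and finally read off the $\lambda\to 0^+$ asymptotics from the CLM behavior of $f_{X,11}$ around $\pm\lambda_0$.

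First I would compute $\gamma_{Y,11}(h)=\EE Y_{1,n+h}Y_{1,n}$ and $\gamma_{Y,12}(h)=\EE Y_{1,n+h}Y_{2,n}$ directly from \eqref{eq:converse-construction}, expanding the products $(\cos\cdot X_1-\sin\cdot X_2)(\cos\cdot X_1-\sin\cdot X_2)$, etc. Using property \eqref{Property1} for $\{(X_{1,n},X_{2,n})\}$ (i.e.\ $\gamma_{X,22}=\gamma_{X,11}$ and $\gamma_{X,21}=-\gamma_{X,12}$), the trigonometric simplifications $\cos\cos+\sin\sin=\cos(\lambda_0 h)$ and $\sin\cos-\cos\sin=\sin(\lambda_0 h)$ collapse the four terms into
\begin{align}
\gamma_{Y,11}(h)&=\gamma_{X,11}(h)\cos(\lambda_0 h)+\gamma_{X,12}(h)\sin(\lambda_0 h),\\
\gamma_{Y,12}(h)&=-\gamma_{X,11}(h)\sin(\lambda_0 h)+\gamma_{X,12}(h)\cos(\lambda_0 h),
\end{align}
together with $\gamma_{Y,22}=\gamma_{Y,11}$ and $\gamma_{Y,21}=-\gamma_{Y,12}$. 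The last two equalities establish that $Y$ inherits \eqref{Property1}, so $f_Y$ satisfies \eqref{eq:property-spec} and can legitimately be written in the form \eqref{eq:spectral-asympotitc-Y}.

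Next I would expand $\cos$ and $\sin$ as complex exponentials in the Fourier series defining $f_{Y,jk}$, exactly as in the proof of Proposition \ref{prop:spectral-property}, to obtain
\begin{align}
f_{Y,11}(\lambda)&=\tfrac{1}{2}\bigl[f_{X,11}(\lambda-\lambda_0)+f_{X,11}(\lambda+\lambda_0)\bigr]+\tfrac{1}{2i}\bigl[f_{X,12}(\lambda-\lambda_0)-f_{X,12}(\lambda+\lambda_0)\bigr],\\
f_{Y,12}(\lambda)&=-\tfrac{1}{2i}\bigl[f_{X,11}(\lambda-\lambda_0)-f_{X,11}(\lambda+\lambda_0)\bigr]+\tfrac{1}{2}\bigl[f_{X,12}(\lambda-\lambda_0)+f_{X,12}(\lambda+\lambda_0)\bigr].
\end{align}
Now send $\lambda\to 0^+$. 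Using that $f_{X,11}$ is even and \eqref{eq:cyclical-spectrum}, one has $f_{X,11}(\lambda+\lambda_0)\sim c_f^+\lambda^{-2d}$ and $f_{X,11}(\lambda-\lambda_0)=f_{X,11}(\lambda_0-\lambda)\sim c_f^-\lambda^{-2d}$. This immediately gives the $f_{X,11}$-contributions, namely $\tfrac{1}{2}(c_f^++c_f^-)\lambda^{-2d}$ to $f_{Y,11}$ and $\tfrac{1}{2i}(c_f^--c_f^+)\lambda^{-2d}$ to $f_{Y,12}$.

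Finally I would plug in the two choices of $X_2$. In the \emph{uncorrelated case}, $f_{X,12}\equiv 0$, so only the $f_{X,11}$ part survives and, after matching with \eqref{eq:spectral-asympotitc-Y} (which here reduces to $g_0\lambda^{-2d}$ and $ig_1\lambda^{-2d}$), yields \eqref{eq:g0g1-insights-uncorr}. In the \emph{Hilbert transform case}, using $f_{X,12}(\mu)=i\,\text{sign}(\mu)f_{X,11}(\mu)$ (with the convention fixed by \eqref{eq:hilb-transf-spec}) together with the odd/even parity of the two densities, one finds $f_{X,12}(\lambda\pm\lambda_0)$ contributes an amount that, up to sign, equals $\pm$ the $f_{X,11}$-contribution; the two pieces then add constructively in $f_{Y,11}$ and $f_{Y,12}$, doubling the previous coefficients and producing \eqref{eq:g0g1-insight-hilbert}. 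The main obstacle I anticipate is precisely this bookkeeping: one must keep careful track of $\operatorname{sign}(\lambda\pm\lambda_0)$ as $\lambda\to 0^+$, of the evenness of $f_{X,11}$ versus the oddness of $f_{X,12}$, and of the factor $1/(2i)$ versus $1/2$ in the two displayed formulas, as otherwise the Hilbert contributions cancel rather than reinforce the $f_{X,11}$ ones.
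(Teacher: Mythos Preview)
Your proposal is correct and follows essentially the same route as the paper: compute $\gamma_Y$ from $\gamma_X$ using property \eqref{Property1}, pass to the spectral domain by expanding $\cos,\sin$ as complex exponentials to express $f_{Y,11}$ and $f_{Y,12}$ as combinations of $f_{X,11},f_{X,12}$ shifted by $\pm\lambda_0$, and then read off the $\lambda\to 0^+$ asymptotics separately in the uncorrelated and Hilbert cases. The paper's displayed intermediate formula for $\gamma_{Y,11}$ differs from yours by the sign on the $\gamma_{X,12}(h)\sin(\lambda_0 h)$ term (a direct expansion of \eqref{eq:converse-construction} yields your $+$), but the strategy and final conclusions are identical, and you rightly flag this sign bookkeeping as the crux of the Hilbert-transform computation.
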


\begin{proof}
    We start by showing that $\{Y_n \}_{n \in \ZZ}$ is a bivariate LM series satisfying \eqref{eq:biv-spectral-general} with \eqref{eq:spectral-asympotitc-Y}. Indeed, we see that
\begin{equation} \label{eq:conv-LM}
\begin{split}
f_{Y}(\lambda) &= \sum_{h \in \ZZ} e^{-ih\lambda} \gamma_{Y} (h) \\
&= \sum_{h \in \ZZ} e^{-i \lambda h}
\begin{pmatrix}
    \gamma_{X,11}(h) \cos(\lambda_0h) - \gamma_{X,12}(h) \sin(\lambda_0 h) & \gamma_{X,12}(h) \cos(\lambda_0h) - \gamma_{X,11}(h) \sin(\lambda_0 h) \\
    -\gamma_{X,12}(h) \cos(\lambda_0h) +\gamma_{X,11}(h) \sin(\lambda_0 h) & \gamma_{X,11}(h) \cos(\lambda_0h) - \gamma_{X,12}(h) \sin(\lambda_0 h) 
\end{pmatrix} \\
&= \begin{pmatrix}
    f_{Y,11}(\lambda) & f_{Y,12}(\lambda) \\
    f_{Y,21}(\lambda) & f_{Y,22}(\lambda)
\end{pmatrix},
\end{split}
\end{equation}
where the second line follows from standard calculations as in the proof of Proposition \ref{thm:constr}. Moreover,
    \begin{equation} \label{eq:conv-LM-A}
\begin{split}
    f_{Y,11} (\lambda) &=  \sum_{h \in \ZZ} \left(  \frac{1}{2} \left[\gamma_{X,11} (h)  \left( e^{- i(\lambda - \lambda_0) h} + e^{-i (\lambda + \lambda_0)  h} \right) \right]- \frac{1}{2i} \left[   \gamma_{X,12} (h) \left( e^{- i(\lambda - \lambda_0) h} - e^{-i (\lambda + \lambda_0)  h}\right) \right] \right) \\
    &= \frac{1}{2} \left[ f_{X,11}(\lambda - \lambda_0) +  f_{X, 11}(\lambda + \lambda_0) \right] - \frac{1}{2i} \left[ f_{X,12}(\lambda - \lambda_0) -  f_{X, 12}(\lambda + \lambda_0) \right]
\end{split}
\end{equation}
and 
\begin{equation} \label{eq:conv-LM-B}
\begin{split}
    f_{Y,12}(\lambda) &=  \sum_{h \in \ZZ} \left( \frac{1}{2} \left[ \gamma_{X,12} (h)  \left( e^{- i(\lambda - \lambda_0) h} + e^{-i (\lambda + \lambda_0)  h} \right) \right] - \frac{1}{2i} \left[  \gamma_{X,11} (h) \left( e^{- i(\lambda - \lambda_0) h} - e^{-i (\lambda + \lambda_0)  h}\right) \right] \right) \\
    &= \frac{1}{2} \left[ f_{X,12}(\lambda - \lambda_0) + f_{X,12}(\lambda + \lambda_0) \right] - \frac{1}{2i} \left[ f_{X,11}(\lambda - \lambda_0) - f_{X, 11}(\lambda + \lambda_0) \right].
\end{split}
\end{equation} 
From \eqref{eq:conv-LM}, \eqref{eq:conv-LM-A}, and \eqref{eq:conv-LM-B}, we see that, in the uncorrelated case \eqref{eq:uncorr-insights}, as $\lambda \to 0^+$,
\begin{equation}
    f_{Y,11}(\lambda) \sim  
       \frac{1}{2}\left( c_f^+ + c_f^- \right) |\lambda|^{-2d},
\quad 
   f_{Y,12}(\lambda) \sim  
       \frac{i}{2} \left( c_f^- - c_f^+   \right) |\lambda|^{-2d},
\end{equation}
which leads to $g_0,g_1$ given in \eqref{eq:g0g1-insights-uncorr}.

On the other hand, considering the Hilbert transform case \eqref{eq:hilb-transf-spec}, we see that, as $\lambda \to 0^+$,
\begin{equation}
    f_{Y,11}(\lambda) \sim  
         \left( c_f^+ + c_f^- \right) |\lambda|^{-2d}, \quad 
   f_{Y,12}(\lambda) \sim  
       i \left( c_f^- - c_f^+  \right) |\lambda|^{-2d}, 
\end{equation}
leading to the respective constants $g_0,g_1$ presented in \eqref{eq:g0g1-insight-hilbert}.
\end{proof} 

\begin{remark}
    From the perspective of signal processing, there are physical reasons to select the Hilbert transform in random modulation; see the discussion in \cite{Pap83}. It is not immediately clear whether there is a statistical benefit in selecting the Hilbert transform in the context considered here.
\end{remark}

\subsection{Linear Representations}

We identify here the linear representation for the parametric RMod series $\{X_n\}_{n \in \ZZ}$ when the series $Y$ is constructed in \eqref{eq:D-def-diag}--\eqref{eq:def-Q+-}. First note that the construction in \eqref{eq:D-def-diag}--\eqref{eq:def-Q+-} is a special case of the general form in \eqref{eq:Zn}--\eqref{eq:A+A-general} with (upon noticing that $c_{d,0} = 1$)
\begin{equation} \label{eq:ak-q0-q1-cdk}
a_{0,k} = 
\begin{cases}
    c_{d,-k} q_0 & k \le -1, \\
    2q_0 & k = 0, \\
    c_{d,k} q_0 & k \ge 1,
\end{cases} \quad 
a_{1,k} = \operatorname{sign}(k) c_{d,|k|} q_1 =
\begin{cases}
    -c_{d,-k} q_1 & k \le -1, \\
    0 & k = 0,\\
    c_{d,k} q_1 & k \ge 1.
\end{cases}
\end{equation}
The following result provides a linear representation for the RMod Series $\{X_n\}_{n \in \ZZ}$ and is an immediate corollary of Theorem \ref{thm:cycl-linear-general}.
\begin{corollary}
    Let $\{X_n\}_{n \in \ZZ}$ and $\{Y_n\}_{n \in \ZZ}$ be given by \eqref{eq:D-def-diag} and \eqref{eq:cdk-asympt}--\eqref{eq:def-Q+-} respectively. Then, $\{X_n\}_{n \in \ZZ}$ has the linear representation obtained in Theorem \ref{thm:cycl-linear-general} with $a_{0,k},a_{1,k}, k \in \ZZ$, given in \eqref{eq:ak-q0-q1-cdk}.
\end{corollary}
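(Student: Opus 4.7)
The plan is to observe that the parametric bivariate series defined in \eqref{eq:Yn}--\eqref{eq:def-Q+-} is literally a special case of the general two-sided linear representation \eqref{eq:Zn}--\eqref{eq:A+A-general}, and then to invoke Theorem \ref{thm:cycl-linear-general} as a black box. There is essentially no new content in the corollary beyond the coefficient identification in \eqref{eq:ak-q0-q1-cdk}.

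First, I would expand the fractional integration operators using the Taylor expansion \eqref{eq:Taylor-exp}, writing
\[
 (I-B)^{-D} Q_+ \veps_n = \sum_{k=0}^\infty c_{d,k} Q_+ \veps_{n-k}, \qquad (I-B^{-1})^{-D} Q_- \veps_n = \sum_{k=0}^\infty c_{d,k} Q_- \veps_{n+k}.
\]
Reindexing the second sum by $l = -k$ and collecting terms, I would rewrite $Y_n = \sum_{l \in \ZZ} A_l \veps_{n-l}$ with $A_l = c_{d,l} Q_+$ for $l \ge 1$, $A_l = c_{d,-l} Q_-$ for $l \le -1$, and $A_0 = Q_+ + Q_-$ (using $c_{d,0} = 1$).

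Next, I would read off the $(a_{0,l}, a_{1,l})$ entries in the format \eqref{eq:A+A-general}. Since $Q_+ = \left( \begin{smallmatrix} q_0 & q_1 \\ -q_1 & q_0 \end{smallmatrix}\right)$ and $Q_- = Q_+^T = \left( \begin{smallmatrix} q_0 & -q_1 \\ q_1 & q_0 \end{smallmatrix}\right)$, for $l \ge 1$ one gets $a_{0,l} = c_{d,l} q_0$, $a_{1,l} = c_{d,l} q_1$; for $l \le -1$ one gets $a_{0,l} = c_{d,-l} q_0$, $a_{1,l} = -c_{d,-l} q_1$; and at $l=0$ the diagonals add to give $a_{0,0} = 2 q_0$ and the off-diagonals cancel to give $a_{1,0} = 0$. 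This exactly matches \eqref{eq:ak-q0-q1-cdk} (and can be written compactly using $\operatorname{sign}(k)$ for the $a_{1,k}$ entries as is already done in the statement).

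Finally, having verified that the hypotheses of Theorem \ref{thm:cycl-linear-general} are met with these particular $a_{0,k}, a_{1,k}$, the corollary follows by directly substituting into \eqref{eq:cycl-linear-general}. There is no real obstacle here; the only thing to be careful about is the $l = 0$ term, where $Q_+$ and $Q_-$ both contribute and one must not double-count or lose a factor of two, and the sign convention in $a_{1,l}$ on the negative-index side, which comes from the transpose relation $Q_- = Q_+^T$ in \eqref{eq:def-Q+-}.
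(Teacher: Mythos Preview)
Your proposal is correct and follows essentially the same approach as the paper: the paper states the coefficient identification \eqref{eq:ak-q0-q1-cdk} in the text immediately preceding the corollary (noting $c_{d,0}=1$) and then declares the result ``an immediate corollary of Theorem \ref{thm:cycl-linear-general}.'' You have simply spelled out the expansion of $(I-B^{\pm1})^{-D}$ and the bookkeeping at $l=0$ in a bit more detail than the paper does.
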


We finally note that this general representation retrieves a two-sided linear representation for the fractional sinusoidal waveform process; see, e.g., Proposition 1 of \cite{ProMad22}.


\section{Extensions}
\label{sec:extensions}

\subsection{Asymmetry in Memory Parameter} \label{subsec:asymm-mem-par}

Random modulation and our parametric approach can also be used to construct CLM series satisfying \eqref{eq:cyclical-spectrum-assym}, that is,
\begin{equation} \label{eq:cyclical-spectrum-assym-2}
 f_X(\lambda) \sim 
 \begin{cases}
       c_{f,+} (\lambda - \lambda_0)^{-2d_+}, &\quad\text{as}\; \lambda \to \lambda_0^+,\\
       c_{f,-} (\lambda_0 - \lambda)^{-2d_-}, &\quad\text{as}\; \lambda \to \lambda_0^-,
 \end{cases}
\end{equation}
where $c_{f,+},c_{f,-} > 0$ and $d_-,d_+ \in \left(0,\frac{1}{2}\right)$, with $d_- \neq d_+$. As noted in Section \ref{sec:intro}, it is natural to set
\begin{equation} \label{eq:asymm-memory-model}
    X_n = X_{n}^+ + X_{n}^-,
\end{equation}
where $\{X^+_n\}_{n \in \ZZ}$ and $\{X^-_n\}_{n \in \ZZ}$ are uncorrelated and satisfy \eqref{eq:cyclical-spectrum-assym-constr}. Recall that the asymptotics \eqref{eq:cyclical-spectrum-assym-constr} are associated with the ``boundary" cases \eqref{eq:cycl-phi-boundary}. The asymptotic behavior $0 \cdot |\lambda-\lambda_0 |^{-2d_\pm}$ in \eqref{eq:cyclical-spectrum-assym-constr}, however, is not specific enough to guarantee \eqref{eq:cyclical-spectrum-assym-2}. Conditions implying \eqref{eq:cyclical-spectrum-assym-2} are
\begin{equation} \label{eq:cyclical-spectrum-assym-constr2}
 f_{X^+}(\lambda) \sim 
 \begin{cases}
       c_{f,+} (\lambda - \lambda_0)^{-2d_+}, &\quad\text{as}\; \lambda \to \lambda_0^+,\\
       b_{f,+} \cdot (\lambda_0 - \lambda)^{0}, &\quad\text{as}\; \lambda \to \lambda_0^-,
 \end{cases} \quad
  f_{X^-}(\lambda) \sim 
 \begin{cases}
       b_{f,-} \cdot  (\lambda - \lambda_0)^{0}, &\quad\text{as}\; \lambda \to \lambda_0^+,\\
       c_{f,-} (\lambda_0 - \lambda)^{-2d_-}, &\quad\text{as}\; \lambda \to \lambda_0^-,
 \end{cases}
\end{equation}
where $b_{f,+},b_{f,-} \ge 0$. We argue here that the parametric RMod model constructed in Section \ref{subsubsec:boundary} in the ``boundary" cases \eqref{eq:cycl-phi-boundary} satisfy \eqref{eq:cyclical-spectrum-assym-constr2}, and thus yields the model \eqref{eq:asymm-memory-model} satisfying \eqref{eq:cyclical-spectrum-assym-2}.

Following the development of Section \ref{subsubsec:boundary}, we take $\{X^+_n\}$ in \eqref{eq:asymm-memory-model} as a parametric model FRMod$(0,d,0)$ constructed in Section \ref{subsubsec:clm-effects} with
\begin{equation} \label{eq:d+-q0-q1}
    d = d_+, \quad q_{0,+} =  \frac{\sin(\pi d_+)}{1 + \cos(\pi d_+)}q_{1,+}.
\end{equation}
By \eqref{eq:bound-case-spec-dens}, the spectral density of $\{ X_n^+\}_{n \in \ZZ}$ satisfies \eqref{eq:cyclical-spectrum-assym-constr2} with
\begin{equation} \label{eq:bound-case-spec-dens-asymm}
\begin{split}
    c_{f,+}  = \frac{ q_{1,+}^2  (1 - \cos(2 \pi d_+))}{\pi ( 1 + \cos(\pi d_+) )},  \quad
    b_{f,+} = \frac{2^{-2d_+} \sin^{-2d_+}\left( \lambda_0 \right) }{\pi ( 1 + \cos(\pi d_+) )} \left( 1 - \cos\left( 2(\pi - \lambda_0) d_+ \right) \right) q_{1,+}^2.
\end{split}
\end{equation}
Similarly, we take $\{X_n^-\}$ in \eqref{eq:asymm-memory-model} as the same parametric model with
\begin{equation} \label{eq:d--q0-q1}
    d = d_-, \quad q_{0,-} = - \frac{\sin(\pi d_-)}{1 + \cos(\pi d_-)}q_{1,-}.
\end{equation}
By \eqref{eq:667}--\eqref{eq:668}, the spectral density of $\{X_n^-\}$ satisfies \eqref{eq:cyclical-spectrum-assym-constr2} with
\begin{equation} \label{eq:bound-case-spec-dens-asymm2}
\begin{split}
    c_{f,-}  =\frac{  q_{1,-}^2 (1 - \cos(2 \pi d_-))}{\pi ( 1 + \cos(\pi d_-) )},  \quad
    b_{f,-} = \frac{2^{-2d_-} \sin^{-2d_-}\left( \lambda_0 \right) }{\pi ( 1 + \cos(\pi d_-) )} \left( 1 - \cos\left( 2(\pi -\lambda_0 )d_- \right) \right) q_{1,-}^2.
\end{split}
\end{equation}
We can thus construct the series \eqref{eq:asymm-memory-model} satisfying \eqref{eq:cyclical-spectrum-assym-2} with $c_{f,+}$ and $c_{f,-}$ given in \eqref{eq:bound-case-spec-dens-asymm} and \eqref{eq:bound-case-spec-dens-asymm2}. The construction above could be extended to FRMod$(p,d,q)$ models in the ``boundary" case.

One can also compute the ACVF of $X$. Indeed, let $\{Y^+_n\}_{n \in \ZZ}$ and $\{Y^-_n\}_{n \in \ZZ}$ be given by \eqref{eq:D-def-diag}--\eqref{eq:def-Q+-}, with parameters specified by \eqref{eq:d+-q0-q1} and \eqref{eq:d--q0-q1} respectively. Then, we write
\begin{equation} \label{eq:gamma-x+-x-}
\gamma_X(h) = \gamma_{X^+}(h) + \gamma_{X^-}(h) = \cos(\lambda_0 h) \left( \gamma_{Y^+,11}(h) + \gamma_{Y^-,11}(h)   \right) - \sin(\lambda_0 h) \left( \gamma_{Y^+,12}(h) + \gamma_{Y^-,12}(h)   \right),
\end{equation}
where the second equality follows from the first line of \eqref{eq:rmod-basic} for $\gamma_{X^+}(h)$ and $\gamma_{X^-}(h)$ separately. Now define
\begin{equation}
\begin{split}
    \mathfrak{A}_{\pm} &\doteq \frac{4 \Gamma(1-2d_\pm) \sin(\pi d_\pm)}{1 + \cos(\pi d_\pm)}  \frac{\Gamma(h+d_\pm) }{ \Gamma(h+1-d_\pm) \pi}, \quad  
    \mathfrak{B}_{\pm} \doteq \frac{2 \cos(\pi d_\pm)}{1+ \cos(\pi d_\pm)}\frac{\Gamma(2d_\pm +h)}{\Gamma(2d_\pm) \Gamma(1+h)} \1_{\{h \ge 0\}}, \\
    \mathfrak{C}_{\pm} &\doteq \frac{2 \cos(\pi d_\pm)}{1+ \cos(\pi d_\pm)}\frac{\Gamma(2d_\pm - h)}{\Gamma(2d_\pm) \Gamma(1 - h)}  \1_{\{h \le 0\}} , \quad
    \mathfrak{D}_{\pm} \doteq \sqrt{\frac{1 - \cos(\pi d_\pm)}{1 + \cos(\pi d_\pm)}}  \frac{ 2 }{ \Gamma(2d_\pm)} \frac{\Gamma(h+2d_\pm)}{\Gamma(1+h)}.
\end{split}
\end{equation}
In view of \eqref{eq:acvf-parametric-finite-lag-borderline}, this leads to, for $h \in \ZZ$,
\begin{equation}
         \gamma_X(h) = \cos(\lambda_0 h) \left[ q^2_{1,+} \left( \mathfrak{A}_+ - \mathfrak{B}_+ - \mathfrak{C}_+   \right) + q^2_{1,-} \left( \mathfrak{A}_- - \mathfrak{B}_- - \mathfrak{C}_-   \right)   \right] - \sin(\lambda_0 h)\text{sign}(h) \left[] q_{1,+}^2 \mathfrak{D}_+ -  q_{1,-}^2 \mathfrak{D}_-  \right].
\end{equation}
 To identify the asymptotics of $\gamma_X(h)$, we need more refined approximations than the ones expressed through \eqref{eq:gamma-asymptotics-first order}. From Equation (1) of \cite{Erftri51} and \eqref{eq:gamma-asymptotics-first order}, it follows that, as $h \to +\infty$,
 \begin{equation} \label{eq:gamma+-second-asympt}
\begin{split}
\gamma_{Y^+,11}(h) &=  \frac{1}{2\pi} \left[ 4 (q_{0,+}^2+q_{1,+}^2) \Gamma(1-2d_+) \sin (d_+\pi)  + 2 (q_{0,+}^2-q_{1,+}^2)  \frac{\pi}{\Gamma(2d_+)}   \right]h^{2d_+-1} + O(h^{2d_+-2}), \\
\gamma_{Y^+,12}(h) &=  \frac{2q_{0,+} q_{1,+}}{\Gamma(2d_+)} h^{2d_+-1} + O(h^{2d_+-2}),
\end{split}
\end{equation}
and similarly
 \begin{equation} \label{eq:gamma--second-asympt}
\begin{split}
\gamma_{Y^-,11}(h) &=  \frac{1}{2\pi} \left[ 4 (q_{0,-}^2+q_{1,-}^2) \Gamma(1-2d_-) \sin (d_-\pi)  + 2 (q_{0,-}^2-q_{1,-}^2)  \frac{\pi}{\Gamma(2d_-)}   \right]h^{2d_--1} + O(h^{2d_--2}), \\
\gamma_{Y^-,12}(h) &= \frac{2q_{0,-} q_{1,-}}{\Gamma(2d_-)} h^{2d_--1} + O(h^{2d_--2}).
\end{split}
\end{equation}
Let $d^* \doteq \max \{d_+,d_-\}$, $d_* \doteq  \min\{d_+,d_-\}$ and $\bar q_{i} = q_{i,\pm}$, $q_{i} = q_{i,\mp}$ if $d^* = d_\pm$. Then, since $2 d^* -2 < 2d_* - 1 < 2 d^* -1$ and from \eqref{eq:gamma-x+-x-}, \eqref{eq:gamma+-second-asympt}, and \eqref{eq:gamma--second-asympt}, we have
\begin{equation}
\begin{split}
  \gamma_X(h) &= h^{2d^* -1} \left[ \cos(\lambda_0 h) \frac{1}{2\pi} \left[ 4 (\bar q_0^2+\bar q_1^2) \Gamma(1-2d^*) \sin (d^*\pi)  + 2 (\bar q_0^2-\bar q_1^2)  \frac{\pi}{\Gamma(2d^*)}   \right] - \sin(\lambda_0 h)  \frac{2\bar q_0 \bar q_1}{\Gamma(2d^*)} \right] \\
  &\quad + h^{2d_* -1} \left[ \cos(\lambda_0 h) \frac{1}{2\pi} \left[ 4 (q_0^2+q_1^2) \Gamma(1-2d_*) \sin (d_*\pi)  + 2 (q_0^2-q_1^2)  \frac{\pi}{\Gamma(2d_*)}   \right] - \sin(\lambda_0 h)  \frac{2q_0 q_1}{\Gamma(2d_*)} \right] \\
  &\quad + \cos(\lambda_0 h) O(h^{2d^* -2}) +\sin(\lambda_0 h) O(h^{2d^* -2}).
\end{split}
\end{equation} \vspace{-.7cm}

The only CLM model we are aware of having the form \eqref{eq:cyclical-spectrum-assym-2} is the Seasonal Cyclical Asymmetric Long Memory (CSALM) model of \cite{Arteche20003}. The CSALM model, unlike the one considered here, does not have explicit ACVF.

\subsection{CLM with Multiple Singularities} \label{subsec:multip-sing}

Our approach extends easily to the case of CLM with multiple singularities, expressed in the spectral domain as: for $\lambda_{0,m} \in (0,\pi)$, $m=1,\dots,M$,
\begin{equation} \label{eq:cyclical-spectrum-multiple}
 f_X(\lambda - \lambda_{0,m}) \sim 
 \begin{cases}
       c_{f,m}^+ \lambda^{-2d_m}, &\quad\text{as}\; \lambda \to 0^+,\\
       c_{f,m}^- ( - \lambda)^{-2d_m}, &\quad\text{as}\; \lambda \to 0^-,
 \end{cases}
\end{equation}
where $d_m \in \left(0,\frac{1}{2} \right)$ and $c_{f,m}^+,c_{f,m}^- \ge 0$, $c_{f,m}^+ + c_{f,m}^- > 0$. We refer to this as CLM with $M$ factors ($M$-CLM, for short). As with \eqref{eq:cycl-ACVF} and \eqref{eq:cyclical-spectrum}, it is expected that under suitable conditions, the condition \eqref{eq:cyclical-spectrum-multiple} is equivalent to the time-domain condition: as $h \to \infty$,
\begin{equation}
    \gamma_X(h) = \sum_{m=1}^M \left( c_{\gamma,m} \cos(\lambda_{0,m} h + \phi_m ) h^{2d_m -1} +  o(h^{2d_m-1}) \right),
\end{equation}
where $\phi_m \in \cli_{d_m}$ and $c_{\gamma,m} > 0$. M-CLM was considered in, e.g., \cite{GirLei95,Arteche20003,Maddanu:2022,ProMad22}.

The series $\{X_n\}_{n \in \ZZ}$ satisfying \eqref{eq:cyclical-spectrum-multiple} can be constructed as
\begin{equation}
    X_n = \sum_{m=1}^M X_{m,n},
\end{equation}
where $\{X_{m,n}\}$, $m=1,\dots,M$, are independent series and the spectral density $f_{X_m}(\lambda)$ of each $\{X_{m,n}\}$ satisfies \eqref{eq:cyclical-spectrum-multiple}. One or more of the parametric RMod models introduced in this work can be taken for $\{X_{m,n}\}$, including in the ``boundary" case. For the resulting parametric model, when $\{X_{m,n}\}$ are constructed in \eqref{eq:Yn}--\eqref{eq:def-Q+-}, the ACVF and spectral densities can be computed explicitly  by
\begin{equation}
    \gamma_X(h) = \sum_{m=1}^M \gamma_{X_m}(h), \quad f_X(\lambda) = \sum_{m=1}^M f_{X_m}(\lambda),
\end{equation}
where $\gamma_{X_m}$ and $f_{X_m}$ are given explicitly in Propositions \ref{thm:Yn-constr} and \ref{prop:spec-dens-param}, respectively. It is evident that one can consider one or more of the $\{X_{m,n}\}$ to be FRMod$(p,d,q)$. Again, explicit formulas for the ACVF of the $M-$factor Gegenbauer processes are not available  (see, e.g., the discussion at the end of Section 3 in \cite{arteche1999} or \cite{Dis18}).

\section{Numerical Illustrations}\label{sec:numerical}

We provide here some simple numerical illustrations for various theoretical developments considered above. 
We do not aim for a detailed simulation study (that would require a rigorous inference treatment that is outside the paper's scope), but rather to shed light on the inner workings of our model. To promote clarity and ease the replication of our work, we will make detailed references to all formulas and share our MATLAB code.

The top plots in Figures \ref{f:fig1}--\ref{f:fig4} depict realizations of FRMod$(0,d,0)$ series $\{X_n\}$, i.e., random modulations \eqref{eq:cycl-constr-basic} with an underlying bivariate LM series 
$\{Y_n\}$ following \eqref{eq:Yn}--\eqref{eq:def-Q+-} for four different parameter schemes. 
The bottom left plots show the respective sample ACVFs and true ACVFs calculated from relations \eqref{eq:rmod-basic} and \eqref{eq:acvf-parametric-finite-lag}. Finally, the bottom right plots show the series' 
periodograms and spectral densities (in log scale) calculated from relations \eqref{eq:f-X-from-Y-general} and \eqref{eq:param-spectral}. The title of each figure contains the values for the FRMod$(0,d,0)$ model parameters $(d,q_0,q_1,\lambda_0)$ as well as the values of the limiting ACVF parameters $c_\gamma$, $\phi$ (calculated from relations \eqref{eq:phi-def} and \eqref{eq:def-a-b}), the set of admissible phases ${\cal I}_d$ (from \eqref{eq:admiss-sets-def}) and the spectral density limiting parameters $c_f^+,c_f^-$ (calculated from \eqref{eq:cfpm-cgamma}).

  \begin{figure}[!h]
      \includegraphics[width=6in, height = 2.5in]{./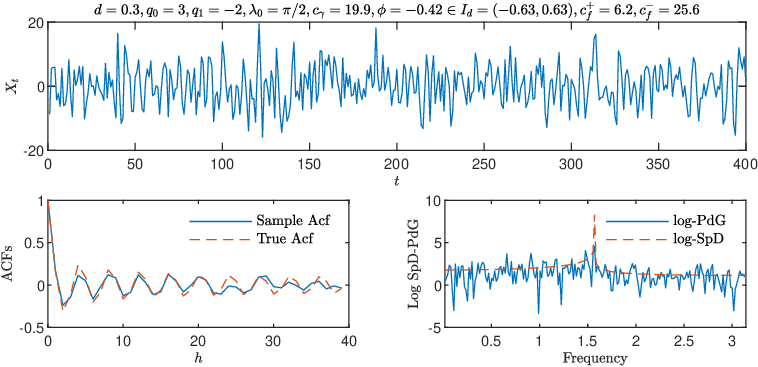}
      \vspace{-2mm}
      \caption{\textit{CLM realization with theoretical ACVF, sample ACVF, spectral density and periodogram.}}\label{f:fig1}  
  \end{figure}

  \begin{figure}[!h]
    \vspace{-1mm}
      \includegraphics[width=6in, height = 2.5in]{./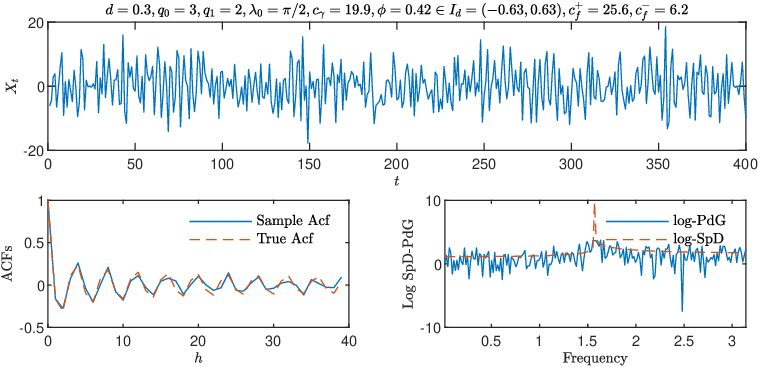}
            \vspace{-2mm}
      \caption{\textit{CLM realization with theoretical ACVF, sample ACVF, spectral density and periodogram.}}\label{f:fig2}  
  \end{figure}  
  
  \begin{figure}[!h]
    \vspace{-1mm}
    \begin{center}
        \includegraphics[width=6in, height = 2.5in]{./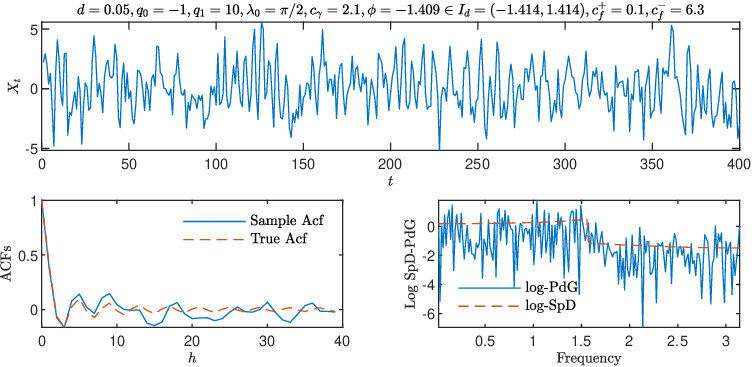}
          \vspace{-2mm}
      \caption{\textit{CLM realization with theoretical ACVF, sample ACVF, spectral density and periodogram.}}\label{f:fig3}  
  \end{center}
  \end{figure}

  \begin{figure}[!h]
    \vspace{-1mm}
    \begin{center}
    \includegraphics[width=6in, height = 2.5in]{./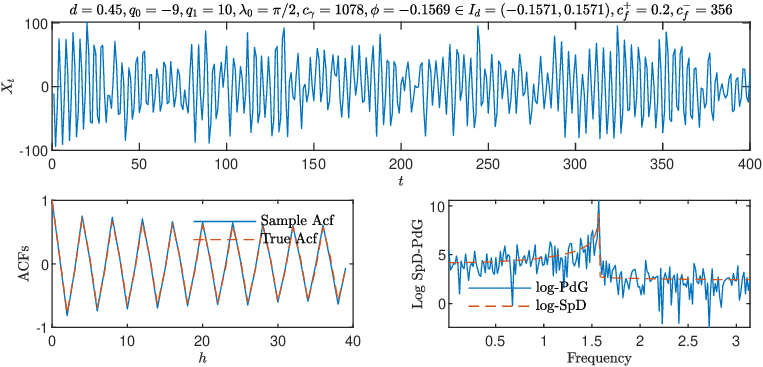}
          \vspace{-2mm}
      \caption{\textit{CLM realization with theoretical ACVF, sample ACVF, spectral density and periodogram.}}\label{f:fig4}  
  \end{center}
\end{figure}

Figures \ref{f:fig5} and \ref{f:fig6} are similar to Figures \ref{f:fig1}--\ref{f:fig4} but correspond to the ``boundary'' case, i.e., $\phi=-(1/2-d)\pi$ and $\phi=(1/2-d)\pi$ respectively, with $d=0.4, q_1=3, \lambda_0 = \pi/4$ and with $q_0, c_f^+,c_f^-$ computed from the formulas \eqref{eq:q0-boundary}, \eqref{eq:668} and \eqref{eq:bound-case-spec-dens}.
Several observations regarding the figures follow.



    The opposite signs of the parameter $q_1$ in Figures
\ref{f:fig1}--\ref{f:fig2} lead to opposite signs in the cyclical phases $\phi$, which in turn control the direction of the asymmetry before and after the cyclical frequency $\lambda_0$. For example, negative values of $\phi$ $(=-0.42)$ lead to spectra with lower frequencies having larger ``weight" than higher ones: witnessed both visually and algebraically through $c_f^- = 25.6 > 6.2 = c_f^+$. This relationship can also be seen in the left plot of Figure \ref{f:fig7} that depicts $q_1$ as a function of $\phi$ for four different $d$'s and is obtained by relations \eqref{eq:def-a-b} and \eqref{eq:phi-def}. 

The dashed green horizontal line in the left plot of Figure \ref{f:fig7}, which is the lower bound of admissible phases for $d=0.35$, intersects the corresponding curve at the minimum value. To avoid overburdening the plot, we did not add similar lines for the upper bound of admissible phases, nor for other values of $d$. However, upon visual inspection, one can see that  restricting $q_1$ at values that yield admissible phases $\phi$ ensures the curves will be one-to-one and thus the model will be identifiable.

The absolute value of $\phi$ does not necessarily control the size of the 
asymmetry gap around the cyclical frequency (in the sense of the average  
difference between a few spectrum values before and after the cyclical frequency). 
Compare, for example, Figures  \ref{f:fig3} and \ref{f:fig4}.

In Figure \ref{f:fig4}, the behavior of the spectrum around the cyclical frequency appears similar
to the boundary case (compare it, for example, with the one in Figure \ref{f:fig5}). This happens because
for the selected values of $q_0$ and $q_1$, relations \eqref{eq:phi-def} and 
\eqref{eq:def-a-b} yield a phase that is close (but not equal) to the lower boundary of the admissible set $\cli_d$ in \eqref{eq:admiss-sets-def}. 


  \begin{figure}[!h]
    \vspace{-1mm}
    \begin{center}
    \includegraphics[width=6in, height = 2.5in]{./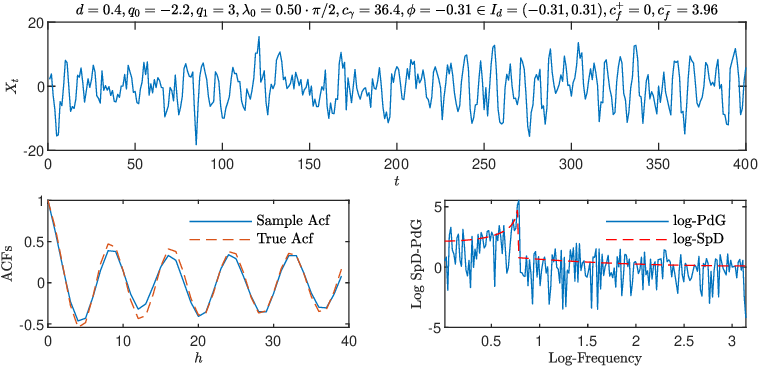}
          \vspace{-2mm}
      \caption{\textit{CLM realization with theoretical ACVF, sample ACVF, spectral density and periodogram.}}\label{f:fig5}  
  \end{center}
  \end{figure}

    \begin{figure}[!h]
    \vspace{-1mm}
    \begin{center}
    \includegraphics[width=6in, height = 2.5in]{./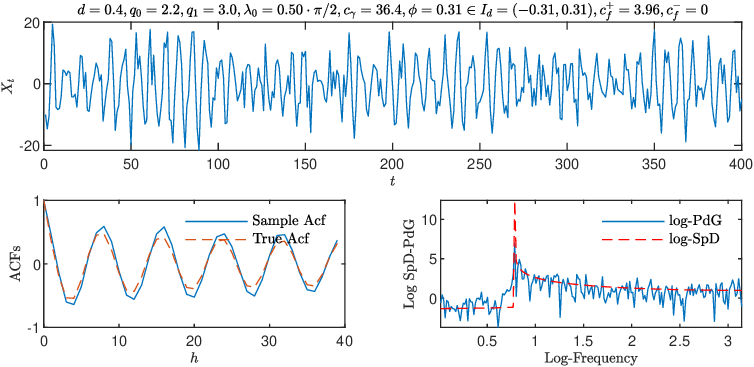}
          \vspace{-2mm}
      \caption{\textit{CLM realization with theoretical ACVF, sample ACVF, spectral density and periodogram.}}\label{f:fig6}  
  \end{center}
  \end{figure}

    \begin{figure}[!h]
    \vspace{-1mm}
    \begin{center}
    \includegraphics[trim={1cm 0 0cm 0}]{./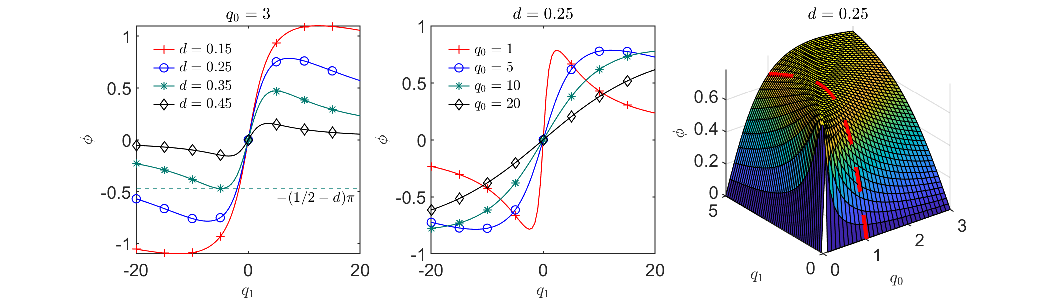}
          \vspace{-2mm}
      \caption{\textit{Relationship between $q_1$ and $\phi$ for fixed $q_0$ (left) and
      for fixed $d$ (middle and right).}}\label{f:fig7}  
  \end{center}
  \end{figure}

\section{Conclusions}
\label{sec:conclusions}

We have introduced a way to construct time series exhibiting CLM based on random modulation (RMod). We have also provided a class of parametric models capturing general CLM, admitting explicit autocovariance functions, spectral densities, and linear representations with respect to a white noise sequence. A crucial element of these constructions is the decoupling of the quasi-periodicity and long memory (LM) that appear in the autocovariance functions of the CLM series.

The constructed parametric RMod series, on the one hand, significantly extend the basic model that was considered in \cite{ProMad22} and, on the other hand, rely on delicate constructions of bivariate parametric LM series considered in \cite{KecPi15}. We mention here two advantages compared to other parametric CLM models: first, our model allows for more flexibility with regard to modeling, due to the presence of a cyclical phase $\phi$ and a careful analysis of the model when $\phi$ is on the ``boundary" of its admissible set; second, the construction of the RMod series allows for explicit calculations of ACVF and spectra that are often not available for other CLM models, such as the Gegenbauer series. These explicit quantities can be used readily for downstream tasks, such as simulation.

The flexibility in the modeling of CLM with RMod series is also evidenced by the extensions that were undertaken in Section \ref{sec:extensions}, including the case of multiple singularities in the spectrum and the ``boundary" case. We conjecture that it is also possible to consider RMod series in several different settings, e.g. multivariate RMod series (for the multivariate Gegenbauer processes see, e.g., \cite{WuPei18}) or RMod random fields (again, for the Gegenbauer counterpart, see, e.g., \cite{EspejoLeonenkoRuizMedina+2014+1+16}). It is also interesting to consider statistical tasks for RMod series, e.g., estimation for the location of the singularity $\lambda_0$ and the memory parameter.

\section*{Acknowledgments}
VP was supported in part by the ONR grants N00014-19-1-2092 and N00014-23-1-2176. PZ was partly supported by a dissertation completion fellowship that was extended from UNC's Graduate School.

 \bibliographystyle{abbrv}
\bibliography{cas-refs}

\vspace{\baselineskip}



\newpage

\appendix

\section{Definitions of CLM in Time and Spectral Domains} \label{app:sec-defn}
\normalsize

In this appendix, we formalize the definitions of CLM in the spectral and time domains, incorporating both the ``non-boundary" and ``boundary" cases. In Appendix \ref{app:sec-spec-to-time} (resp. \ref{app:sec-time-to-spec}) below, we provide some conditions for a series exhibiting CLM in the spectral domain (resp. time domain) to also exhibit CLM in the time domain (resp. spectral domain). Finally, we prove a technical lemma that will be useful for the ``boundary" case (of Appendix \ref{app:subsec-time-to-spec-boundary}) in Appendix \ref{app:sec-auxillary} below. 

For clarity, Definition \ref{defn:clm-spec} below is given for series exhibiting CLM with one singularity, but analogues can be developed in the case of multiple singularities.


\begin{defn}(Spectral Domain) \label{defn:clm-spec}
    A second-order stationary time series $\{X_n\}_{n \in \ZZ}$ is said to exhibit CLM if its spectral density satisfies
\begin{equation} \label{eq:spec-rigorous-defn}
    f_X(\lambda) = \begin{cases}
        \left(c_f^- + R_f^-(\lambda_0 - \lambda) \right) (\lambda_0 - \lambda)^{-2d}, & 0 < \lambda < \lambda_0, \vspace{3mm}\\
        \left((c_f^+ + R_f^+(\lambda - \lambda_0)  \right) (\lambda - \lambda_0)^{-2d}, & \lambda_0 < \lambda < \pi,
    \end{cases}
\end{equation}
where $\lambda_0 \in (0,\pi), d \in \left( 0, \frac{1}{2} \right)$, $c_f^\pm \ge 0$ with $c_f^+ + c_f^- > 0$, and $R_f^\pm: (0,\infty) \to [-c_f^\pm,\infty)$ are functions with $R_f^\pm(x) \to 0$, as $x \to 0^+$. If, in addition, $c_f^+ = 0$ or $c_f^- = 0$, then we say that $\{X_n\}_{n \in \ZZ}$ exhibits CLM in the ``boundary" case. The ``non-boundary" case corresponds to $c_f^+ > 0$ and $c_f^- > 0$.
\end{defn}

The next definition characterizes CLM in the time domain.

\begin{defn}(Time domain) \label{defn:clm-time}
    A second-order stationary time series $\{X_n\}_{n \in \ZZ}$ is said to exhibit CLM if its autocovariance function satisfies 
    \begin{align} \label{eq:def-eq-acvf} 
       \gamma_X(h) &= \left( c_\gamma \cos(\lambda_0 h + \phi) + R_\gamma(h)   \right) h^{2d-1},  \quad h \in \NN_0, 
    \end{align}
    where $ \lambda_0 \in (0,\pi), d \in (0,1/2), c_\gamma \in (0,\infty), \phi \in [-\left(\frac{1}{2} - d\right) \pi, \left(\frac{1}{2} - d\right) \pi]$,  and $R_\gamma: [0,\infty) \to \RR$ is a function with $R_\gamma(x) \to 0, \quad \text{as } x \to + \infty$. If, in addition, $\phi = -\left(\frac{1}{2} - d\right) \pi$ or $\phi = \left(\frac{1}{2} - d\right) \pi$, then we say that $\{X_n\}_{n \in \ZZ}$ exhibits CLM in the ``boundary" case. The ``non-boundary" case is $\phi \in (-\left(\frac{1}{2} - d\right) \pi, \left(\frac{1}{2} - d\right) \pi)$.
\end{defn}

The next remark provides an alternative useful formulation for CLM in the time domain. 

\begin{remark} \label{rmk-clm-defn-time-equiv}
   The relation \eqref{eq:def-eq-acvf} can be recast as
    \begin{equation} \label{def-time-2}
    \gamma_X(h) = c_{1,\gamma} \cos(\lambda_0 h)  h^{2d-1} + c_{2,\gamma} \sin(\lambda_0 h)  h^{2d-1} + R_\gamma(h) h^{2d-1},
    \end{equation}
    where $R_\gamma$ is as in Definition \ref{defn:clm-time}, and
    \begin{equation} \label{eq:c1g-c2g-cg}
     c_{1,\gamma} \doteq c_\gamma \cos(\phi) \in (0,\infty), \quad c_{2,\gamma} \doteq - c_\gamma \sin(\phi) \in \RR.
    \end{equation}
\end{remark}

\begin{remark}
    Definitions \ref{defn:clm-spec} and \ref{defn:clm-time} suggest the relationship $c_f^\mp = 0 \Leftrightarrow \phi = \pm \left(\frac{1}{2} - d\right) \pi$ in the ``boundary" case. This will be discussed in Appendix \ref{subsubsec:admis-phase} below.
\end{remark}

\begin{remark}
    More general definitions of CLM could be based on using slowly varying functions instead of constants $c_f^\pm$ in \eqref{eq:spec-rigorous-defn} and $c_{i,\gamma}$ in \eqref{def-time-2}. We restrict our analysis to the case of constants, which is more relevant to modeling in practice. The role of functions $R_f^\pm$ in \eqref{eq:spec-rigorous-defn} is to allow for deviations from the constants $c_f^\pm$, and Definition \ref{defn:clm-spec} naturally captures the divergence of the spectral density around frequency $\lambda_0$. By having the function $R_\gamma(h)$ in \eqref{eq:def-eq-acvf} or \eqref{def-time-2}, we aimed to have a general definition of CLM in the time domain. There may, however, be ways to make it more inclusive. For example, take two uncorrelated series $X$ and $Y$ with $X$ being CLM in the sense of \ref{defn:clm-time} and $Y$ being LM with the memory parameter $\tilde d > d$. One may want to call $X+Y$ CLM, but $X+Y$ is not CLM in the sense of Definition \ref{defn:clm-time}, since the LM decay $h^{2\tilde d-1}$ associated with $Y$ cannot be incorporated in the remainder term $R_\gamma(h)h^{2d-1}$ in \eqref{eq:def-eq-acvf} .
\end{remark}

We note that series exhibiting \textit{Seasonal Cyclical Asymmetric Long Memory} (SCALM) presented in, e.g., \cite{Arteche20002,Arteche2000}, also exhibit CLM in the boundary case (i.e., $c_f^+ = 0$ or $c_f^- = 0$) according to Definition \ref{defn:clm-spec}.

Definitions \ref{defn:clm-spec} and \ref{defn:clm-time} are, in general, not equivalent. We are aware of only two works relating some versions of these two definitions. First, in Proposition 2 of \cite{viadenopp95}, an argument is provided for passing from the spectral to the time domain definition for extended fractional ARMA processes with seasonal effects in the ``non-boundary" case. However, even for this specific class of processes, the reader is referred to a different, but similar proof, and so the arguments are not complete. Second, Lemma 1 of Chapter 2 in \cite{phdArt98} considers an argument for passing from the spectral to the time domain definition in the ``boundary" case. While the proof strategy is valid (and, in fact, similar to the one used in Appendix \ref{app:subsec-spec-to-time-boundary}), it ignores second-order asymptotic expansions (see Appendix \ref{app:sec-auxillary}), thus rendering the stated result imprecise. We are not aware of any results obtaining a spectral domain representation from the time domain. 

In relating Definitions \ref{defn:clm-spec} and \ref{defn:clm-time} we shall use slowly varying functions converging to a constant, but also having the following quasi-monotonicity property. A slowly varying function $L:[0, \infty) \to (0,\infty)$ at infinity is called \textit{quasi-monotone} (see, e.g., Chapter 2.7 in \cite{bingham_goldie_teugels_1987}) if the following two conditions hold: $(i)$ it is of bounded variation on any compact interval of $[0,\infty)$ and $(ii)$ for some $\delta > 0$, $
\int_0^x u^\delta |d L(u)| = O(x^\delta L(x)),$ as $x \to \infty$.

\section{From Spectral to Time Domain} \label{app:sec-spec-to-time}

When going from the spectral to the time domain, the terms $c_f^-(\lambda_0 - \lambda)^{-2d}$ and $c_f^+(\lambda - \lambda_0)^{-2d}$ in Definition \ref{defn:clm-spec} will lead to $c_\gamma \cos(\lambda_0 h + \phi) h^{2d-1}$ in Definition \ref{defn:clm-time}, plus a remainder of order $h^{-1}$. The ultimate form of the remainder $R_\gamma(h)h^{2d-1}$ in \eqref{eq:def-eq-acvf} will be determined by the exact forms of $R_f^-(\lambda_0-\lambda)$ and $R_f^+(\lambda-\lambda_0)$. Various forms are possible. Results below provide a few examples and illustrations. 

In Appendix \ref{app:subsec-spec-to-time-non-boundary}, we state and prove a result in the ``non-boundary" case. We emphasize that special treatment is required for the ``boundary" case, and state a related result in Appendix \ref{app:subsec-spec-to-time-boundary}. Finally, we derive the set of admissible cyclical phases and note some special cases for the cyclical phase $\phi$ in Appendix \ref{subsubsec:admis-phase}.

\subsection{``Non-boundary" Case} \label{app:subsec-spec-to-time-non-boundary}

The following proposition provides sufficient conditions for Definition \ref{defn:clm-spec} to imply Definition \ref{defn:clm-time} in the ``non-boundary" case.

\begin{proposition} \label{prop:equiv-defn-spec-to-time} 
    Let
    \begin{equation}
        R_f^\pm(x) = L_f^\pm\left(\frac{1}{x}\right) - c_f^\pm,
    \end{equation}
    where $L_f^-: \left(\frac{1}{\lambda_0},\infty\right) \to (0,\infty)$ and $L_f^+: \left(\frac{1}{\pi - \lambda_0},\infty\right) \to (0,\infty)$ are  quasi-monotone slowly varying functions at $+ \infty$, with $L_f^\pm(x) \sim c_f^\pm \in (0,\infty)$ as $x \to + \infty$. Then, Definition \ref{defn:clm-spec} implies Definition \ref{defn:clm-time} with \eqref{eq:cgamma-psi}.
\end{proposition}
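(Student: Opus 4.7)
The strategy is to invert the Fourier representation of $\gamma_X$, isolate the contribution of the singularity at $\lambda_0$ via a change of variables, and apply an Abelian-type theorem for Fourier transforms of quasi-monotone slowly varying functions. Since $f_X$ is even on $(-\pi,\pi)$, one has $\gamma_X(h) = 2\int_0^\pi \cos(h\lambda) f_X(\lambda)\,d\lambda$. Splitting the integral at $\lambda_0$, substituting $u=\lambda_0-\lambda$ on $(0,\lambda_0)$ and $u=\lambda-\lambda_0$ on $(\lambda_0,\pi)$, and using the identity $\cos(h(\lambda_0\mp u)) = \cos(\lambda_0 h)\cos(hu) \pm \sin(\lambda_0 h)\sin(hu)$, the ACVF decomposes as $\gamma_X(h) = 2\cos(\lambda_0 h) I_c(h) + 2\sin(\lambda_0 h) I_s(h)$, where
\begin{align*}
I_c(h) &= \int_0^{\lambda_0} \cos(hu) L_f^-(1/u) u^{-2d}\,du + \int_0^{\pi-\lambda_0}\cos(hu) L_f^+(1/u) u^{-2d}\,du,\\
I_s(h) &= \int_0^{\lambda_0} \sin(hu) L_f^-(1/u) u^{-2d}\,du - \int_0^{\pi-\lambda_0}\sin(hu) L_f^+(1/u) u^{-2d}\,du.
\end{align*}

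Next, I would invoke the Abelian theorem for Fourier transforms of quasi-monotone slowly varying functions (essentially Section~4.10 of Bingham--Goldie--Teugels, in the spirit of Zygmund): if $L$ is quasi-monotone slowly varying at $+\infty$ with $L(x) \to c$ and $\alpha \in (0,1)$, then for any fixed $A>0$,
\begin{equation*}
\int_0^A L(1/u)\, u^{-\alpha}\cos(hu)\,du \sim c\,\Gamma(1-\alpha)\sin(\pi\alpha/2)\, h^{\alpha-1}, \quad \int_0^A L(1/u)\, u^{-\alpha}\sin(hu)\,du \sim c\,\Gamma(1-\alpha)\cos(\pi\alpha/2)\, h^{\alpha-1},
\end{equation*}
as $h\to\infty$. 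Applying this with $\alpha=2d$ to each of the four integrals, I would obtain $I_c(h) \sim (c_f^+ + c_f^-)\,\Gamma(1-2d)\sin(\pi d)\, h^{2d-1}$ and $I_s(h) \sim (c_f^- - c_f^+)\,\Gamma(1-2d)\cos(\pi d)\, h^{2d-1}$.

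Combining the two contributions yields
\begin{equation*}
\gamma_X(h) = 2\Gamma(1-2d)\big[(c_f^+ + c_f^-)\sin(\pi d)\cos(\lambda_0 h) - (c_f^+ - c_f^-)\cos(\pi d)\sin(\lambda_0 h)\big] h^{2d-1} + o(h^{2d-1}).
\end{equation*}
Matching this against $c_\gamma \cos(\lambda_0 h + \phi)h^{2d-1} = c_\gamma[\cos\phi\,\cos(\lambda_0 h) - \sin\phi\,\sin(\lambda_0 h)]h^{2d-1}$ identifies $c_\gamma\cos\phi = 2\Gamma(1-2d)(c_f^++c_f^-)\sin(\pi d)$ and $c_\gamma\sin\phi = 2\Gamma(1-2d)(c_f^+-c_f^-)\cos(\pi d)$; squaring and adding (using $\sin^2(\pi d)-\cos^2(\pi d) = -\cos(2\pi d)$) yields $c_\gamma$, and then dividing the second relation by $c_\gamma$ yields $\sin\phi$, reproducing the formulas \eqref{eq:cgamma-psi}. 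The main technical obstacle is the rigorous justification of the Abelian step: one must show that the correction $L_f^\pm(1/u) - c_f^\pm$ contributes only $o(h^{2d-1})$ and that the portion of the integrals away from $u=0$ also contributes only $o(h^{2d-1})$. This is precisely where the quasi-monotonicity hypothesis enters, via an integration-by-parts argument that converts the oscillatory tail into an integral controlled by $\int_0^x u^\delta |dL(u)|$, for which quasi-monotonicity provides the required $O(x^\delta L(x))$ bound.
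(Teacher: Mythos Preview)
Your proposal is correct and follows essentially the same route as the paper's proof: split $\gamma_X(h)=2\int_0^\pi \cos(h\lambda)f_X(\lambda)\,d\lambda$ at $\lambda_0$, change variables to center at the singularity, expand $\cos(h(\lambda_0\mp u))$, and apply the Abelian theorem for Fourier integrals of quasi-monotone slowly varying functions (the paper cites Proposition~A.2.2 of \cite{pipiras_taqqu_2017} rather than Bingham--Goldie--Teugels, but the content is the same). The only cosmetic difference is that the paper treats the left and right pieces $\gamma_-(h),\gamma_+(h)$ separately before recombining, whereas you group the cosine and sine parts into $I_c,I_s$ first; your explicit derivation of $c_\gamma$ and $\phi$ from the matched coefficients is a nice addition that the paper leaves implicit.
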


\begin{proof}
We have
\begin{equation} \label{eq:210}
\begin{split}
\gamma_X(h) &=   2\int_{0}^{\pi} \cos(h\lambda) f_X(\lambda) d\lambda  = 2 \int_0^{\lambda_0} \cos(h\lambda) f_X(\lambda) d\lambda + 2 \int_{\lambda_0}^{\pi} \cos(h\lambda) f_X(\lambda) d\lambda \\
&\doteq 2\left[ \gamma_-(h) + \gamma_+(h)   \right].
\end{split}
\end{equation}
We consider these two quantities separately. First,
\begin{equation} \label{eq:spec-to-time-gamma-}
\begin{split}
    \gamma_-(h) &= \int_0^{\lambda_0} \cos(h \lambda ) L_f^-\left( \frac{1}{\lambda_0 - \lambda}\right) (\lambda_0 - \lambda)^{-2d}   d\lambda = \int_0^{\lambda_0} \cos(h (\lambda_0 - \om)) L_f^-\left( \frac{1}{\om} \right) \om^{-2d}   d\om \\
    &= \cos(h \lambda_0) \int_0^{\lambda_0} \cos(h \om)  L_f^-\left( \frac{1}{\om} \right) \om^{-2d}   d\om + \sin(h \lambda_0) \int_0^{\lambda_0} \sin(h \om)  L_f^-\left( \frac{1}{\om} \right) \om^{-2d}   d\om,
\end{split}
\end{equation}
where the second equality follows from the change of variables $\lambda_0 - \lambda = \om$. By Proposition A.2.2 of \cite{pipiras_taqqu_2017} and since $L_f^-$ is quasi-monotone, we have that, as $h\to\infty$,
\begin{equation}
    \int_0^{\lambda_0} \cos(h \om)  L_f^-\left( \frac{1}{\om} \right) \om^{-2d}   d\om = h^{2d-1} L_f^-(h) \Gamma(1-2d) \sin(\pi d) + o(h^{2d-1}),
\end{equation}
and similarly
\begin{equation}
    \int_0^{\lambda_0} \sin(h \om)  L_f^-\left( \frac{1}{\om} \right) \om^{-2d}   d\om  = h^{2d-1} L_f^-(h) \Gamma(1-2d) \cos(\pi d) + o(h^{2d-1}).
\end{equation}
Analogous calculations show that
\begin{equation} \label{eq:spec-to-time-gamma+}
\begin{split}
    \gamma_+(h) &= \int_{\lambda_0}^\pi \cos(h \lambda ) L_f^+\left( \frac{1}{\lambda - \lambda_0}\right) ( \lambda - \lambda_0)^{-2d}   d\lambda = \int_0^{\pi - \lambda_0} \cos(h (\lambda_0 + \om)) L_f^+\left( \frac{1}{\om} \right) \om^{-2d}   d\om \\
    &= \cos(h \lambda_0) \int_0^{\pi - \lambda_0} \cos(h \om)  L_f^+\left( \frac{1}{\om} \right) \om^{-2d}   d\om - \sin(h \lambda_0) \int_0^{\pi - \lambda_0} \sin(h \om)  L_f^+\left( \frac{1}{\om} \right) \om^{-2d}   d\om,
\end{split}
\end{equation}
where
\begin{equation}
    \int_0^{\pi - \lambda_0} \cos(h \om)  L_f^+\left( \frac{1}{\om} \right) \om^{-2d}   d\om = h^{2d-1} L_f^+(h) \Gamma(1-2d) \sin\left( \pi d   \right) + o(h^{2d-1}),
\end{equation}
and
\begin{equation}
    \int_0^{\pi - \lambda_0} \sin(h \om)  L_f^+\left( \frac{1}{\om} \right) \om^{-2d}   d\om  = h^{2d-1} L_f^+(h) \Gamma(1-2d) \cos\left( \pi d   \right) + o(h^{2d-1}).
\end{equation}
Since $L_f^{\pm}(h) \to c_f^{\pm}$ as $h \to \infty$, \eqref{eq:210} can be written as
\begin{equation}
\begin{split}
    \gamma_X(h) &= \cos(h \lambda_0) \left[ h^{2d-1} c_f^- 2 \Gamma(1-2d) \sin(\pi d)  +R_1^-(h) \right] \\
    &\quad+ \sin(h \lambda_0) \left[ h^{2d-1} c_f^- 2 \Gamma(1-2d) \cos(\pi d)  + R_2^-(h) \right] \\
    &\quad + \cos(h \lambda_0) \left[ h^{2d-1} c_f^+ 2 \Gamma(1-2d) \sin(\pi d)  + R_1^+(h) \right] \\
    &\quad- \sin(h \lambda_0) \left[ h^{2d-1} c_f^+ 2 \Gamma(1-2d) \cos(\pi d)  + R_2^+(h) \right],
\end{split}
\end{equation}
where $R_{1}^\pm(h) \doteq 2(L_f^\pm(h) - c_f^{\pm}) \Gamma(1-2d) \sin(\pi d) h^{2d-1} + o(h^{2d-1})$ and $R_{2}^\pm(h) \doteq 2(L_f^\pm(h) - c_f^{\pm}) \Gamma(1-2d) \cos(\pi d) h^{2d-1} + o(h^{2d-1})$. We thus have that
\begin{equation} \label{eq:213}
\begin{split}
    \gamma_X(h) &= 2 \Gamma(1-2d) h^{2d-1}  \left[(c_f^+ + c_f^-)  \sin(\pi d) \cos(\lambda_0 h) + (c_f^- - c_f^+)  \cos(\pi d) \sin(\lambda_0h)\right] + R_\gamma(h) h^{2d-1} \\
    &= c_{\gamma} \cos(\lambda_0 h + \phi) h^{2d-1}  + R_\gamma(h) h^{2d-1},
\end{split}
\end{equation}
where $c_\gamma,\phi$ are given in \eqref{eq:cgamma-psi}, and 
\begin{equation} 
R_\gamma(h) h^{2d-1} \doteq \cos(h \lambda_0) R_1^-(h) + \sin(h \lambda_0) R_2^-(h) + \cos(h \lambda_0) R_1^+(h) - \sin(h \lambda_0) R_2^+(h) = o(h^{2d-1}).
\end{equation}
This concludes the proof.
\end{proof}

\begin{remark} \label{rmk:cg-c1g-c2g}
    Note that \eqref{eq:213} can be reformulated as in \eqref{def-time-2} with
    \begin{equation} \label{eq:cg1-cg2}
    c_{1,\gamma} \doteq 2\Gamma(1-2d) (c_f^- + c_f^+) \sin(\pi d ), \quad c_{2,\gamma} \doteq 2\Gamma(1-2d) (c_f^- - c_f^+) \cos(\pi d ).
   \end{equation}
\end{remark}



\subsection{``Boundary" Case} \label{app:subsec-spec-to-time-boundary}

As stated in Definition \ref{defn:clm-spec}, the ``boundary" case corresponds to $c_f^+ = 0$ or $c_f^- = 0$. For simplicity, we consider here only the case $c_f^- = 0$, leading to a phase $\phi = \left( \frac{1}{2} - d \right) \pi$. An analogous result can be stated for the case $c_f^+ = 0$, corresponding to $\phi = -\left( \frac{1}{2} - d \right) \pi$.

\begin{proposition} \label{it:spec-to-time-border} 
    Let 
    \[
    c_f^- = 0, \quad R_f^-(x) = L_f^- \left( \frac{1}{x}  \right) x^{2\veps},\quad  R_f^+(x) = L_f^+ \left( \frac{1}{x} \right) - c_f^+,
    \]
    where $L_f^-: \left(\frac{1}{\lambda_0},\infty\right) \to (0,\infty)$, $L_f^+: \left(\frac{1}{\pi - \lambda_0},\infty\right) \to (0,\infty)$ are two quasi-monotone slowly varying functions at $+\infty$, with $L_f^+(x) \sim c_f^+$ as $x \to \infty$, and $\veps \in (0,d)$. Then, Definition \ref{defn:clm-spec} implies Definition \ref{defn:clm-time} in the ``boundary" case with
    \begin{equation} \label{eq:cgamma-phi-boundary}
        c_\gamma \doteq 2\Gamma(1-2d) c_f^+, \quad \phi \doteq \left( \frac{1}{2} - d \right) \pi.
    \end{equation}
\end{proposition}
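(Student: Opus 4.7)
The plan is to mirror the argument of Proposition \ref{prop:equiv-defn-spec-to-time}, but exploit the fact that the spectral behavior to the left of $\lambda_0$ is genuinely weaker than $(\lambda_0-\lambda)^{-2d}$, so that its contribution to $\gamma_X(h)$ is negligible at order $h^{2d-1}$. As before I would split
\[
\gamma_X(h) = 2\int_0^{\lambda_0}\cos(h\lambda) f_X(\lambda)\,d\lambda + 2\int_{\lambda_0}^{\pi}\cos(h\lambda) f_X(\lambda)\,d\lambda \doteq 2\gamma_-(h)+2\gamma_+(h),
\]
shift variables via $\omega = \lambda_0-\lambda$ and $\omega = \lambda-\lambda_0$ respectively, and expand $\cos(h(\lambda_0\mp\omega))$ into $\cos(\lambda_0 h)\cos(h\omega)\pm\sin(\lambda_0 h)\sin(h\omega)$.

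For $\gamma_+(h)$ the analysis is identical to the non-boundary case: since $L_f^+$ is quasi-monotone slowly varying with $L_f^+(x)\to c_f^+$, Proposition A.2.2 of \cite{pipiras_taqqu_2017} gives
\[
\int_0^{\pi-\lambda_0}\cos(h\omega) L_f^+\!\left(\tfrac{1}{\omega}\right)\omega^{-2d}d\omega = c_f^+\Gamma(1-2d)\sin(\pi d)\,h^{2d-1}+o(h^{2d-1}),
\]
and the analogous formula for the sine integral with $\cos(\pi d)$ in place of $\sin(\pi d)$. Hence
\[
\gamma_+(h) = c_f^+\Gamma(1-2d)\bigl[\sin(\pi d)\cos(\lambda_0 h)-\cos(\pi d)\sin(\lambda_0 h)\bigr]h^{2d-1}+o(h^{2d-1}).
\]

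For $\gamma_-(h)$ the key observation is that $f_X(\lambda) = L_f^-(1/(\lambda_0-\lambda))(\lambda_0-\lambda)^{-2(d-\veps)}$ on $(0,\lambda_0)$, so the effective memory parameter is $d-\veps < d$. Applying the same Proposition A.2.2 now with exponent $-2(d-\veps)$ and the quasi-monotone slowly varying $L_f^-$ produces integrals of order $h^{2(d-\veps)-1}L_f^-(h)$, which since $\veps>0$ and $L_f^-$ is slowly varying is $o(h^{2d-1})$; therefore $\gamma_-(h)=o(h^{2d-1})$ and contributes only to the remainder.

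Combining these two pieces and invoking the identity
\[
\sin(\pi d)\cos(\lambda_0 h)-\cos(\pi d)\sin(\lambda_0 h)=\cos\!\Bigl(\lambda_0 h+\bigl(\tfrac{1}{2}-d\bigr)\pi\Bigr),
\]
which follows from $\cos((\tfrac12-d)\pi)=\sin(\pi d)$ and $\sin((\tfrac12-d)\pi)=\cos(\pi d)$, yields
\[
\gamma_X(h) = 2\Gamma(1-2d)c_f^+\cos\!\Bigl(\lambda_0 h+\bigl(\tfrac{1}{2}-d\bigr)\pi\Bigr)h^{2d-1}+o(h^{2d-1}),
\]
which is \eqref{eq:def-eq-acvf} with $c_\gamma,\phi$ as in \eqref{eq:cgamma-phi-boundary}. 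The only delicate point is making sure that the ``boundary'' remainder on the left of $\lambda_0$ is genuinely small; this is precisely where the hypothesis $\veps>0$ (rather than merely $R_f^-(x)\to 0$) is required, and why the analogous argument in \cite{phdArt98} is imprecise without the second-order expansion. The rest is bookkeeping: collect the two $o(h^{2d-1})$ contributions into a single remainder $R_\gamma(h)h^{2d-1}$, and verify that it is of the form required by Definition \ref{defn:clm-time}.
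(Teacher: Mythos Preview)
Your proposal is correct and follows essentially the same approach as the paper's own proof: both split $\gamma_X(h)=2\gamma_-(h)+2\gamma_+(h)$, handle $\gamma_+(h)$ exactly as in the non-boundary case via Proposition~A.2.2 of \cite{pipiras_taqqu_2017}, observe that on $(0,\lambda_0)$ the effective exponent is $-2(d-\veps)$ so that $\gamma_-(h)=O(h^{2(d-\veps)-1}L_f^-(h))=o(h^{2d-1})$, and then combine using the identity $\sin(\pi d)\cos(\lambda_0 h)-\cos(\pi d)\sin(\lambda_0 h)=\cos(\lambda_0 h+(\tfrac12-d)\pi)$. Your remark about the necessity of $\veps>0$ is also in line with the paper's discussion.
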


\begin{proof}
Recall the relation \eqref{eq:210}. In view of \eqref{eq:spec-to-time-gamma+} and the form of $R_f^+$ in the assumption, we write
\begin{equation} \label{eq:bound-g+}
\begin{split}
   2\gamma_+(h) &= 2 \cos(h \lambda_0) \int_0^{\pi - \lambda_0} \cos(h  \om )  L_{f}^+\left(\frac{1}{\om}\right) \om^{-2d } d\om \\
   &\quad- 2\sin(h \lambda_0) \int_0^{\pi - \lambda_0} \sin(h  \om )  L_{f}^+\left(\frac{1}{\om}\right) \om^{-2d } d\om \\
   &= 2 \Gamma(1-2d) \sin(d \pi)  L_{f}^+\left(h \right) \cos(\lambda_0 h) h^{2d-1 } -  2 \Gamma(1-2d ) \cos(d \pi)  L_{f}^+\left(h \right) \sin(\lambda_0 h) h^{2d-1 } \\
   &\quad+ o(h^{2d-1})  \\
    &= 2 \Gamma(1-2d) \sin(d \pi)  c_f^+ \cos(\lambda_0 h) h^{2d-1 } -  2 \Gamma(1-2d ) \cos(d \pi)  c_f^+ \sin(\lambda_0 h) h^{2d-1 } + o(h^{2d-1})  \\
     &= c_\gamma \cos(\lambda_0 h + \phi) h^{2d-1} + o(h^{2d-1}),
\end{split}
\end{equation}
where the first equality follows from Proposition A.2.2 of \cite{pipiras_taqqu_2017}, and $\phi,c_\gamma$ are defined in \eqref{eq:cgamma-phi-boundary}.

Now we compute the contribution of $\gamma_-$ in \eqref{eq:spec-to-time-gamma-}. From Proposition A.2.2 of \cite{pipiras_taqqu_2017} and the form of $R_f^-$, as $h \to \infty$,
\begin{equation} \label{eq:bound-g-}
\begin{split}
2\gamma_-(h) &= 2\cos(h \lambda_0) \int_0^{\lambda_0} \cos(h \lambda ) R_{f}^-\left( \om \right) \om^{-2 d} d\om + 2\sin(h \lambda_0) \int_0^{\lambda_0} \sin(h \om )  R_{f}^-\left( \om \right) \om^{-2d} d\om  \\
&= 2\cos(h \lambda_0) \int_0^{\lambda_0} \cos(h \lambda ) L_{f}^-\left( \frac{1}{\om} \right) \om^{-2 d + 2\veps } d\om + 2\sin(h \lambda_0) \int_0^{\lambda_0} \sin(h \om )  L_{f}^-\left( \frac{1}{\om} \right) \om^{-2d + 2\veps} d\om \\
    &= 2\Gamma(1-2d + 2\veps) L_{f}^-(h) \left(\cos(\pi (d - \veps )) \sin(h \lambda_0 )+ \sin(\pi (d - \veps )) \cos(h \lambda_0) \right) h^{2d-1 - 2\veps} \\
    &\quad+ o(h^{2(d-\veps)-1}) = o(h^{2d-1}) .
\end{split}
\end{equation}
Combining \eqref{eq:210}, \eqref{eq:bound-g+}, and \eqref{eq:bound-g-}, we write
\begin{equation}
    \gamma_X(h) = c_\gamma \cos(\lambda_0 h + \phi) h^{2d-1} + o(h^{2d-1}), 
\end{equation}
where $c_\gamma,\phi$ are defined in \eqref{eq:cgamma-phi-boundary}. 
\end{proof}

\section{From Time to Spectral Domain} \label{app:sec-time-to-spec}

When going from the time to the spectral domain, the term $c_\gamma \cos(\lambda_0 h + \phi) h^{2d-1}$ in Definition \ref{defn:clm-time} will lead to $c_f^- (\lambda_0 - \lambda)^{-2d}$, $c_f^+ (\lambda - \lambda_0)^{-2d}$ in Definition \ref{defn:clm-time}, plus a remainder. The ultimate remainders $R_f^\pm$ in Definition \ref{defn:clm-time} will be determined by the exact form of $R_\gamma(h)$.

In Appendix \ref{app:subsec-time-to-spec-non-boundary}, we state and prove a result in the ``non-boundary" case $\phi \in \left( -\left( \frac{1}{2} -d \right) \pi, \left( \frac{1}{2} -d \right) \pi \right)$. A special treatment is required for the ``boundary" case $\phi = \pm \left( \frac{1}{2} -d \right) \pi$, and a related result is stated in Appendix \ref{app:subsec-time-to-spec-boundary}. Recall the sign function in \eqref{def:sign-func}.

\subsection{``Non-boundary" Case} \label{app:subsec-time-to-spec-non-boundary}

\begin{proposition} \label{prop:equiv-defn-time-to-spec} 
    Assume that $\phi \in \left(\left(d - \frac{1}{2} \right)\pi  , \left(\frac{1}{2} - d\right) \pi \right)$, and that for $h \ge 1$,
    \begin{equation} \label{eq:time-to-spec-rgamma}
        R_\gamma(h) \doteq (L_{1,\gamma}(h) - c_{\gamma} \cos(\phi)) \cos(\lambda_0 h)  + \operatorname{sign}(-\sin(\phi)) (L_{2,\gamma}(h) - |c_{\gamma} \sin(\phi) |) \sin(\lambda_0 h) ,
    \end{equation}
    where $L_{1,\gamma}, L_{2,\gamma} : (0,\infty) \to (0,\infty)$ are quasi-monotone slowly varying functions at $+\infty$, with $L_{1,\gamma}(x) \sim c_{\gamma} \cos(\phi) \in (0,\infty)$ and for $\phi \neq 0$, $L_{2,\gamma}(x) \sim |c_{\gamma} \sin(\phi) | \in (0,\infty)$ as $x \to +\infty$.  Then, Definition \ref{defn:clm-time} implies Definition \ref{defn:clm-spec} with \eqref{eq:cfpm-cgamma}.
\end{proposition}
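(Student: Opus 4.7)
The plan is to run the argument of Proposition \ref{prop:equiv-defn-spec-to-time} in reverse. The first step is to substitute the assumed form \eqref{eq:time-to-spec-rgamma} into \eqref{eq:def-eq-acvf}. Expanding $c_\gamma\cos(\lambda_0 h+\phi)=c_\gamma\cos\phi\cos(\lambda_0 h)-c_\gamma\sin\phi\sin(\lambda_0 h)$ and using the elementary identity $\operatorname{sign}(-\sin\phi)\,|c_\gamma\sin\phi|=-c_\gamma\sin\phi$ (true for all $\phi$, since $c_\gamma>0$), the constants $c_\gamma\cos\phi$ and $|c_\gamma\sin\phi|$ cancel against their counterparts inside $R_\gamma$ and one obtains the clean representation
\[
\gamma_X(h)=h^{2d-1}L_{1,\gamma}(h)\cos(\lambda_0 h)+\operatorname{sign}(-\sin\phi)\,h^{2d-1}L_{2,\gamma}(h)\sin(\lambda_0 h),\qquad h\ge 1.
\]
Using $\gamma_X(-h)=\gamma_X(h)$, I would then write $2\pi f_X(\lambda)=\gamma_X(0)+2\sum_{h\ge 1}\cos(\lambda h)\gamma_X(h)$ and apply the product-to-sum identities. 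This splits the sum into four series of the form $\sum_{h\ge 1}h^{2d-1}L(h)\cos(\mu h)$ or $\sum_{h\ge 1}h^{2d-1}L(h)\sin(\mu h)$ with $L\in\{L_{1,\gamma},L_{2,\gamma}\}$ and $\mu\in\{\lambda-\lambda_0,\lambda+\lambda_0\}$.

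The heart of the argument is the discrete Abelian-type theorem for quasi-monotone slowly varying coefficients, i.e., the series analogue of Proposition A.2.2 of \cite{pipiras_taqqu_2017} (see also Section 4.10 of \cite{bingham_goldie_teugels_1987}): if $L$ is quasi-monotone slowly varying at $+\infty$ and $2d\in(0,1)$, then as $\mu\to 0^+$,
\[
\sum_{h\ge 1}h^{2d-1}L(h)\cos(\mu h)\sim\Gamma(2d)\cos(\pi d)\,L(1/\mu)\,\mu^{-2d},\quad \sum_{h\ge 1}h^{2d-1}L(h)\sin(\mu h)\sim\Gamma(2d)\sin(\pi d)\,L(1/\mu)\,\mu^{-2d}.
\]
Since $\lambda+\lambda_0$ stays bounded away from $0$ as $\lambda\to\lambda_0^\pm$, the two series at frequency $\lambda+\lambda_0$ contribute only $O(1)$, which together with the constant $\gamma_X(0)/(2\pi)$ is absorbed into the $R_f^\pm$ part of \eqref{eq:spec-rigorous-defn}. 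For $\lambda\to\lambda_0^+$, setting $\mu=\lambda-\lambda_0\downarrow 0$ and using $L_{1,\gamma}(1/\mu)\to c_\gamma\cos\phi$, $L_{2,\gamma}(1/\mu)\to|c_\gamma\sin\phi|$ together with the sign identity above, the divergent part of $f_X(\lambda)$ becomes
\[
\frac{\Gamma(2d)}{2\pi}\bigl(c_\gamma\cos\phi\cos(\pi d)+c_\gamma\sin\phi\sin(\pi d)\bigr)(\lambda-\lambda_0)^{-2d}=\frac{c_\gamma\Gamma(2d)}{2\pi}\cos(\pi d-\phi)(\lambda-\lambda_0)^{-2d},
\]
which identifies $c_f^+$ as in \eqref{eq:cfpm-cgamma}. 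The symmetric calculation with $\mu=\lambda_0-\lambda\downarrow 0$ produces $c_f^-=\frac{c_\gamma}{2\pi}\Gamma(2d)\cos(\pi d+\phi)$, and the admissibility hypothesis $\phi\in((d-\tfrac12)\pi,(\tfrac12-d)\pi)$ guarantees $\cos(\pi d\mp\phi)>0$, placing us in the non-boundary case.

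The main obstacle is the Abelian step itself: Proposition \ref{prop:equiv-defn-spec-to-time} only required the integral version (Proposition A.2.2 of \cite{pipiras_taqqu_2017}), whereas here the discrete version for series is needed, and it must be combined with the rate at which $L_{i,\gamma}(1/\mu)$ approaches its limit in order to conclude that the residual can be written as $R_f^\pm(|\lambda-\lambda_0|)\cdot|\lambda-\lambda_0|^{-2d}$ with $R_f^\pm(x)\to 0$ as $x\to 0^+$, in the form required by Definition \ref{defn:clm-spec}. A secondary care point is the sign factor $\operatorname{sign}(-\sin\phi)$ in \eqref{eq:time-to-spec-rgamma}: it is precisely what keeps $L_{2,\gamma}$ positive-valued (so that the Abelian theorem applies to it in its standard form) and what produces the $+\sin\phi\sin(\pi d)$ term that turns the bracket above into $\cos(\pi d-\phi)$ rather than $\cos(\pi d+\phi)$.
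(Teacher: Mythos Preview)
Your proposal is correct and follows essentially the same route as the paper's proof: the same clean form $\gamma_X(h)=h^{2d-1}\bigl(L_{1,\gamma}(h)\cos(\lambda_0 h)+\operatorname{sign}(-\sin\phi)L_{2,\gamma}(h)\sin(\lambda_0 h)\bigr)$, the same product-to-sum split into four series at $\lambda\pm\lambda_0$, the same $O(1)$ bound for the $\lambda+\lambda_0$ pieces, and the same sign bookkeeping leading to $\cos(\pi d\mp\phi)$. The discrete Abelian step you flag as the main obstacle is not a gap: it is precisely Proposition~A.2.1 (not A.2.2) of \cite{pipiras_taqqu_2017}, which the paper invokes directly and which also supplies the uniform tail bound used to show the $\lambda+\lambda_0$ series are $O(1)$; the remainder is then packaged as an explicit $R(\lambda)=o(1)$ just as you anticipate.
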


\begin{proof}
As in the proof of Proposition 2.2.14 in Appendix A.2 of \cite{pipiras_taqqu_2017}, for $\lambda \in [0,\pi) \setminus \{\lambda_0\}$,
\begin{equation}
f_X(\lambda) =  \frac{1}{2\pi} \sum_{h=-\infty}^{\infty} e^{-ih\lambda} \gamma_X(h).
\end{equation}
Strictly speaking, the arguments below are first used to show that the spectral density can be written this way and then the asymptotics are established. In particular,
\begin{equation} \label{eq:456}
\begin{split}
        f_X(\lambda) &=  \frac{1}{2\pi} \left[ \gamma_X(0) + 2\sum_{h=1}^\infty \cos( h \lambda ) \left(L_{1,\gamma}(h) \cos( \lambda_0 h ) + \operatorname{sign}(-\sin(\phi))  L_{2,\gamma}(h) \sin( \lambda_0 h ) \right) h^{2d-1} \right] \\ 
    &= \frac{1}{2\pi} \left[ \gamma_X(0) + f_1(\lambda) - \operatorname{sign}(-\sin(\phi))  f_2(\lambda) +  f_3(\lambda) + \operatorname{sign}(-\sin(\phi)) f_4(\lambda) \right],
\end{split}
\end{equation}
where
\begin{equation} \label{eq:f1-f2-def}
    f_1(\lambda) \doteq \sum_{h=1}^\infty \cos\big((\lambda-\lambda_0) h \big) L_{1,\gamma}(h) h^{2d-1}, \quad f_2(\lambda) \doteq \sum_{h=1}^\infty \sin\big((\lambda-\lambda_0) h \big) L_{2,\gamma(h)} h^{2d-1},
\end{equation}
\begin{equation} \label{eq:f3-f4-def}
    f_3(\lambda) \doteq  \sum_{h=1}^\infty \cos((\lambda + \lambda_0)h  )  L_{1,\gamma}(h)  h^{2d-1}, \quad f_4(\lambda) \doteq  \sum_{h=1}^\infty \sin((\lambda + \lambda_0)h  )  L_{2,\gamma}(h)  h^{2d-1}.
\end{equation}
By Proposition A.2.1 of \cite{pipiras_taqqu_2017}, as $\lambda \to \lambda_0$,
\begin{equation} \label{eq:f1-f2}
\begin{split}
    f_1(\lambda) &= |\lambda-\lambda_0|^{-2d} L_{1,\gamma} \left(  \frac{1}{|\lambda - \lambda_0|}  \right) \Gamma(2d) \cos(\pi d) + o(|\lambda-\lambda_0|^{-2d}), \\
    f_2(\lambda) &= \text{sign}(\lambda - \lambda_0) |\lambda-\lambda_0|^{-2d} L_{2,\gamma} \left(  \frac{1}{|\lambda - \lambda_0|}  \right) \Gamma(2d) \sin(\pi d)+ o(|\lambda-\lambda_0|^{-2d}).
\end{split}
\end{equation}

On the other hand, fix some $0 < \alpha < \lambda_0 < \beta < \pi$ so that $\alpha - \lambda_0 < 0$, $\beta - \lambda_0 > 0$, and $\sup_{\hat \lambda \in [\alpha,\beta]} \{\frac{2}{\sin(\hat \lambda/2)},\frac{2}{\cos(\hat\lambda/2)}\} < \infty$. Since $L_{1,\gamma},L_{2,\gamma}$ are quasi-monotone, the relation (A.2.6) in Proposition A.2.1 of \cite{pipiras_taqqu_2017} says that, for $\lambda \in [\alpha-\lambda_0,\beta-\lambda_0]$,
\[
\left| \sum_{h=n}^\infty \cos( h (\lambda + \lambda_0 )) \frac{L_{1,\gamma}(h)}{h^{1-2d}} \right| \le \frac{2}{\sup_{\hat \lambda \in [\alpha,\beta]} \{\frac{2}{\sin(\hat \lambda/2)}\}} \frac{|L_{1,\gamma}(n)|}{n^{1-2d}} (1  + O(1)),
\]
which implies that, for all $\epsilon > 0$, there exists some $M \ge 1$ such that
\[
\sup_{\lambda \in [\alpha-\lambda_0,\beta-\lambda_0]} \left| \sum_{h=M}^\infty \cos( h (\lambda + \lambda_0 )) \frac{L_{1,\gamma}(h)}{h^{1-2d}} \right| < \epsilon.
\]
Likewise, we obtain
\[
\sup_{\lambda \in [\alpha-\lambda_0,\beta-\lambda_0]} \left| \sum_{h=M}^\infty \sin( h (\lambda + \lambda_0 )) \frac{L_{2,\gamma}(h)}{h^{1-2d}} \right| < \epsilon.
\]
By truncating these series, this shows that, as $\lambda \to \lambda_0$,
\begin{equation} \label{eq:f3-f4}
    f_3(\lambda) \to \sum_{h=1}^\infty \cos( 2 \lambda_0  h ) \frac{L_{1,\gamma}(h)}{h^{1-2d}} = O(1), \quad f_4(\lambda)\to \sum_{h=1}^\infty \sin( 2 \lambda_0  h ) \frac{L_{2,\gamma}(h)}{h^{1-2d}} = O(1).
\end{equation}
Now define
\begin{equation}
\begin{split}
    R(\lambda) &\doteq \frac{\Gamma(2d)}{2\pi} \left[ \left( L_{1,\gamma} \left(  \frac{1}{|\lambda - \lambda_0|}  \right) - c_\gamma \cos(\phi)  \right)\cos(d\pi) \right. \\
    &\quad\left.-  \text{sign}(\lambda-\lambda_0)\left(  \operatorname{sign}(-\sin(\phi)) L_{2,\gamma} \left(  \frac{1}{|\lambda - \lambda_0|}  \right) + c_\gamma \sin(\phi) \right) \sin(d\pi)   \right]  .
\end{split}
\end{equation}
Since $L_{1,\gamma}(x) \to c_\gamma \cos(\phi) ,  \operatorname{sign}(-\sin(\phi)) L_{2,\gamma}(x) \to - c_\gamma \sin(\phi)$ as $x \to +\infty$, it follows that $R(\lambda) = o(1)$. By combining \eqref{eq:456}, \eqref{eq:f1-f2} and \eqref{eq:f3-f4}, we write
\begin{equation} \label{eq:time-to-spec-density}
\begin{split}
    f_X(\lambda) &= \frac{\Gamma(2d)}{2\pi} \left[ L_{1,\gamma} \left(  \frac{1}{|\lambda - \lambda_0|}  \right) \cos(d\pi) \right.\\
    &\quad\quad\quad\quad-\left. \operatorname{sign}(-\sin(\phi))  \text{sign}(\lambda-\lambda_0) L_{2,\gamma} \left(  \frac{1}{|\lambda - \lambda_0|}  \right) \sin(d\pi)   \right]    | \lambda - \lambda_0 |^{-2d} \\
    &\quad+ \frac{\Gamma(2d)}{2\pi} \left( \gamma_X(0) +f_3(\lambda) + \operatorname{sign}(-\sin(\phi)) f_4(\lambda) \right) + o(|\lambda-\lambda_0|^{-2d}) \\ 
    &= \frac{\Gamma(2d)}{2\pi} \left[ c_\gamma \cos(\phi)    \cos(d\pi) + \text{sign}(\lambda-\lambda_0) c_{\gamma} \sin(\phi)  \sin(d\pi)   \right]    | \lambda - \lambda_0 |^{-2d} + R(\lambda) | \lambda - \lambda_0 |^{-2d} \\
    &\quad+  O(1) + o(|\lambda-\lambda_0|^{-2d}) \\ 
    &= \left( \frac{\Gamma(2d)}{2\pi} c_\gamma \cos(\pi d - \text{sign}(\lambda-\lambda_0)\phi) +  o(1) \right) 
 |\lambda - \lambda_0 |^{-2d},
    \end{split}
\end{equation}
where $c_\gamma,\phi$ were defined in \eqref{eq:cgamma-psi}. 
\end{proof}

\begin{remark}
    In view of Remark \ref{rmk:cg-c1g-c2g}, we can recast \eqref{eq:time-to-spec-rgamma} as
        \begin{equation}
        R_\gamma(h) = (L_{1,\gamma}(h) - c_{1,\gamma}) \cos(\lambda_0 h) h^{2d-1} + \operatorname{sign}(c_{2,\gamma}) (L_{2,\gamma}(h) - |c_{2,\gamma} |) \sin(\lambda_0 h) h^{2d-1},
    \end{equation}
    where $c_{1,\gamma},c_{2,\gamma}$ are given in \eqref{eq:c1g-c2g-cg}. Then, from \eqref{eq:cfpm-cgamma}, we have that
    \begin{equation} \label{eq:cf+--from-c1g-c2g}
        c_f^\pm \doteq \frac{\Gamma(2d)}{2\pi} \left( c_{1,\gamma} \cos( \pi d) \mp  c_{2,\gamma} \sin( \pi d) \right) \in (0,\infty).
    \end{equation}
\end{remark}

\subsection{``Boundary" Case} \label{app:subsec-time-to-spec-boundary}

For clarity, we shall focus on the case 
\begin{equation} 
    \phi =  \left( \frac{1}{2} -d \right) \pi,
\end{equation}
which is expected to correspond to $c_f^- = 0$. An analogous statement can be obtained in the case $\phi = - \left( \frac{1}{2} -d \right) \pi$, corresponding to $c_f^+ = 0$.

\begin{proposition} \label{prop:equiv-defn-time-to-spec-boundary} 
    Assume that $\phi = \left(\frac{1}{2} - d\right) \pi $, and that for $h \ge 1$,
    \begin{equation} \label{eq:time-to-spec-rgamma-bound}
        R_\gamma(h) \doteq L_{1,\gamma}(h)  \cos(\lambda_0 h) h^{-2\veps} + \xi L_{2,\gamma}(h)  \sin(\lambda_0 h) h^{-2\veps},
    \end{equation}
    where $\veps \in (0,d), \xi = \pm 1$, $L_{1,\gamma}, L_{2,\gamma} : (0,\infty) \to (0,\infty)$ are quasi-monotone slowly varying functions at $+\infty$, with $L_{1,\gamma}(x) \sim b_1 \in (0,\infty)$ and $\xi L_{2,\gamma}(x) \sim b_2 \in \RR$ as $x \to +\infty$, where 
    \begin{equation} \label{eq:b1-b2-cond}
        b_1 \in \RR \ \ \text{and}\ \ b_2 = 0,\quad \text{or} \quad \frac{b_1}{b_2} \in (-\tan(\pi (d-\veps)), \tan(\pi (d-\veps))). 
    \end{equation}
    Then, Definition \ref{defn:clm-time} implies Definition \ref{defn:clm-spec} with 
        \begin{equation} \label{prop:time-to-spec-bound-cf+}
    c_f^- = 0, \quad c_f^+ \doteq   \frac{c_\gamma}{2\pi} \Gamma(2d)\sin(2\pi d).
\end{equation}
\end{proposition}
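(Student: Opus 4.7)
The plan is to mirror the scheme of Proposition~\ref{prop:equiv-defn-time-to-spec}, but to retain one extra order of asymptotic precision so that the cancellation on one side of $\lambda_0$ can be detected. Using $\phi=(1/2-d)\pi$, hence $\cos\phi=\sin(\pi d)$ and $\sin\phi=\cos(\pi d)$, I would split the ACVF as $\gamma_X(h)=\gamma_{X,0}(h)+\gamma_{X,1}(h)$, where $\gamma_{X,0}(h)=c_\gamma[\sin(\pi d)\cos(\lambda_0 h)-\cos(\pi d)\sin(\lambda_0 h)]h^{2d-1}$ and $\gamma_{X,1}(h)=L_{1,\gamma}(h)\cos(\lambda_0 h)h^{2(d-\veps)-1}+\xi L_{2,\gamma}(h)\sin(\lambda_0 h)h^{2(d-\veps)-1}$, inducing the spectral splitting $f_X=f_{X,0}+f_{X,1}$. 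For $f_{X,0}$ the slowly varying factors are constants, so the computations \eqref{eq:456}--\eqref{eq:time-to-spec-density} apply verbatim to give $f_{X,0}(\lambda)=(c_\gamma\Gamma(2d)/(2\pi))\cos(\pi d-\operatorname{sign}(\lambda-\lambda_0)\phi)|\lambda-\lambda_0|^{-2d}+o(|\lambda-\lambda_0|^{-2d})$. As $\lambda\to\lambda_0^+$ the cosine argument is $(2d-1/2)\pi$ with value $\sin(2\pi d)$, producing the claimed $c_f^+$; as $\lambda\to\lambda_0^-$ the argument is $\pi/2$ and the leading coefficient vanishes.

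For $f_{X,1}$, the remainder $\gamma_{X,1}$ has exactly the structural form of the non-boundary setup \eqref{def-time-2}, but with memory parameter $d-\veps$ and slowly varying coefficients $L_{1,\gamma},\xi L_{2,\gamma}$ tending to $b_1,b_2$. Applying the proof of Proposition~\ref{prop:equiv-defn-time-to-spec} with these substitutions yields $f_{X,1}(\lambda)\sim (\Gamma(2(d-\veps))/(2\pi))[b_1\cos(\pi(d-\veps))-\operatorname{sign}(\lambda-\lambda_0)\,b_2\sin(\pi(d-\veps))]|\lambda-\lambda_0|^{-2(d-\veps)}$. Assembling the two pieces, as $\lambda\to\lambda_0^+$ the leading $(\lambda-\lambda_0)^{-2d}$ term comes entirely from $f_{X,0}$ and delivers $c_f^+=(c_\gamma/(2\pi))\Gamma(2d)\sin(2\pi d)$, while the $(\lambda-\lambda_0)^{-2(d-\veps)}$ term from $f_{X,1}$ is strictly lower order and is absorbed into $R_f^+$. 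As $\lambda\to\lambda_0^-$ the $(\lambda_0-\lambda)^{-2d}$ contribution of $f_{X,0}$ has zero coefficient, so the surviving leading behavior is $K(\lambda_0-\lambda)^{-2(d-\veps)}$ with $K=(\Gamma(2(d-\veps))/(2\pi))[b_1\cos(\pi(d-\veps))+b_2\sin(\pi(d-\veps))]$; hence $f_X(\lambda)(\lambda_0-\lambda)^{2d}\sim K(\lambda_0-\lambda)^{2\veps}\to 0$, matching $c_f^-=0$ and $R_f^-(x)\to 0$ as required by Definition~\ref{defn:clm-spec}. Condition \eqref{eq:b1-b2-cond} enters here precisely to control the sign and non-degeneracy of $K$ consistently with $R_f^-$ taking values in $[0,\infty)$.

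The main obstacle is that the error $o(|\lambda-\lambda_0|^{-2d})$ returned by Proposition~A.2.1 of \cite{pipiras_taqqu_2017} when applied to $f_{X,0}$ is \emph{a priori} not $o(|\lambda-\lambda_0|^{-2(d-\veps)})$, so without sharper control it could swamp the $(\lambda_0-\lambda)^{-2(d-\veps)}$ correction coming from $f_{X,1}$ on the side where the leading coefficient of $f_{X,0}$ vanishes, and the identification of $K$ would break down. Overcoming this requires a second-order expansion of sums of the form $\sum_{h\ge 1}\cos(\lambda h)\cos(\lambda_0 h)h^{2d-1}$ and their $\sin$ analogues that makes the $O(1)$ next-order term explicit; this is precisely the refinement provided by the auxiliary lemma in Appendix~\ref{app:sec-auxillary}, and it is the step whose omission in prior treatments is the source of the imprecision alluded to earlier in Appendix~\ref{app:sec-time-to-spec}.
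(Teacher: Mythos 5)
Your proposal is correct and follows essentially the same route as the paper's proof: the same splitting of $\gamma_X$ into the leading term $c_\gamma\cos(\lambda_0 h+\phi)h^{2d-1}$ and the remainder of order $h^{2(d-\veps)-1}$, the same application of the non-boundary argument to each piece (with memory parameter $d-\veps$ for the remainder), and the same recognition that the side where $\cos(\pi d+\phi)=0$ requires the second-order expansion of Lemma \ref{lemma:second-order-time-to-spec} to control the error term. The paper's proof implements exactly this plan in \eqref{eq:time-spec-boundary-101}--\eqref{eq:time-spec-boundary-102}.
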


\begin{proof}
We can write, for $h \in \ZZ$,
\begin{equation}
\gamma_X(h) = c_\gamma \cos(\lambda_0 h + \phi) h^{2d-1} + L_{1,\gamma}(h) \cos(\lambda_0 h) h^{2d-2\veps-1} + \xi L_{2,\gamma}(h) \sin(\lambda_0 h)h^{2d-2\veps-1} = \gamma_1(h) + \gamma_2(h),
\end{equation}
with
\begin{equation} 
\gamma_1(h) \doteq  c_\gamma \cos(\lambda_0 h + \phi) h^{2d-1}, \quad \gamma_2(h) \doteq L_{1,\gamma}(h) \cos(\lambda_0 h) h^{2d-2\veps-1} + \xi L_{2,\gamma}(h) \sin(\lambda_0 h) h^{2d-2\veps-1}.
\end{equation}
Calculations similar to \eqref{eq:456}--\eqref{eq:time-to-spec-density} and the fact that $\phi = \left( \frac{1}{2} - d \right) \pi$, imply that, as $\lambda \to \lambda_0^+$, 
\begin{equation} \label{eq:time-spec-boundary-101}
\begin{split}
    \frac{1}{2\pi}\sum_{h=-\infty}^\infty e^{-ih\lambda} \gamma_{1}(h) &= \left( \frac{\Gamma(2d)}{2\pi} c_\gamma \cos(\pi d - \text{sign}(\lambda-\lambda_0)\phi) +  o(1) \right) 
 |\lambda - \lambda_0 |^{-2d} \\
    &= \left( \frac{\Gamma(2d)}{2\pi} c_\gamma \sin(2\pi d) +  o(1) \right) 
 |\lambda - \lambda_0 |^{-2d}.
\end{split}
\end{equation}
This yields the formula for $c_f^+$ in \eqref{prop:time-to-spec-bound-cf+}.
On the other hand,  as $\lambda \to \lambda_0^-$,
\begin{equation} \label{eq:time-spec-boundary-100}
\begin{split}
  \frac{1}{2\pi}\sum_{h=-\infty}^\infty e^{-ih\lambda} \gamma_{1}(h) &= \frac{1}{2\pi} \bigg[ \gamma_1(0) + \sin(d\pi) c_\gamma\sum_{h=1}^\infty \cos((\lambda - \lambda_0) h) h^{2d-1} \\
  &\quad-  \cos(d \pi) c_\gamma \sum_{h=1}^\infty \sin((\lambda_0 - \lambda) h) h^{2d-1}\bigg. \\
  &\quad+ \bigg. \sin(d\pi) c_\gamma \sum_{h=1}^\infty \cos((\lambda + \lambda_0) h) h^{2d-1} - \cos(d\pi) c_\gamma \sum_{h=1}^\infty \sin((\lambda + \lambda_0) h) h^{2d-1} \bigg]  \\
&= \frac{1}{2\pi}\left[  \sin(d\pi) c_\gamma \sum_{h=1}^\infty \cos((\lambda + \lambda_0) h) h^{2d-1} \right.\\
&\quad- \left.   \cos(d\pi) c_\gamma \sum_{h=1}^\infty \sin((\lambda + \lambda_0) h) h^{2d-1} + O(1) \right]\\
& = O(1) ,
\end{split}
\end{equation}
where we used the form of $\phi$ and Lemma \ref{lemma:second-order-time-to-spec} to replace the two series (and $\gamma_1(0)$) by $O(1)$ in the second equality, and that the series in the fourth and fifth lines are $O(1)$ in the last equality, which follows from the same arguments as in \eqref{eq:f3-f4}. The relation \eqref{eq:time-spec-boundary-100} is consistent with the form of $c_f^-$ in \eqref{prop:time-to-spec-bound-cf+}, and gets absorbed into $R_f^-(\lambda_0-\lambda) = o(1)$ in \eqref{eq:spec-rigorous-defn}.

We now investigate the spectral density corresponding to $\gamma_2(h)$, which will also get absorbed into $R_f^\pm$ in \eqref{eq:spec-rigorous-defn}. Since $0 < \veps < d < \frac{1}{2}$, calculations similar to the ones in Appendix \ref{app:subsec-time-to-spec-non-boundary} show that, as $\lambda \to \lambda_0$,
\begin{equation} \label{eq:time-spec-boundary-102}
\begin{split}
 \frac{1}{2\pi}\sum_{h=-\infty}^\infty e^{-ih\lambda} \gamma_{2} (h) &= o(|\lambda - \lambda_0|^{-2(d-\veps)})+ \frac{\Gamma(2(d-\veps))}{2\pi} \left[ L_{1,\gamma} \left( \frac{1}{|\lambda-\lambda_0|}  \right) \cos((d-\veps)\pi)  \right.   \\
 &\quad- \left. \text{sign}(\lambda-\lambda_0) \xi L_{2,\gamma} \left( \frac{1}{|\lambda-\lambda_0|}  \right) \sin((d-\veps)\pi)  \right] |\lambda - \lambda_0|^{-2(d-\veps)} .
\end{split}
\end{equation}
The relations \eqref{eq:time-spec-boundary-101}--\eqref{eq:time-spec-boundary-102} show that $R_f^\pm = o(1)$.
\end{proof}

\subsection{Set of Admissible Cyclical Phases and Special Cases}
\label{subsubsec:admis-phase}

Since $c_f^+,c_f^- \geq 0$ and $c_\gamma>0$, note that \eqref{eq:cfpm-cgamma} yields
\[
-\frac{\pi}{2} \le d\pi - \phi \le \frac{\pi}{2}, -\frac{\pi}{2} \le  \phi - d\pi \le \frac{\pi}{2}
\]
so that, as stated in \eqref{eq:admiss-sets-def} and shown in Remark \ref{r:adm-phi},
\[
\phi \in \cli_d \doteq \left[\left(d - \frac{1}{2} \right)\pi  , \left(\frac{1}{2} - d\right) \pi \right].
\]
When $c_f^+ = c_f^- = c_f$ (symmetric case), we have from \eqref{eq:cgamma-psi},
\[
c_\gamma = 2 \sqrt{2} \Gamma(1-2d) c_f \sin(2 \pi d), \quad \phi = 0,
\]
or, in terms of $c_{1,\gamma},c_{2,\gamma}$, from Remark \ref{rmk:cg-c1g-c2g},
\begin{equation} \label{eq:cf-cf+cf}
c_{1,\gamma} = 4 \Gamma(1-2d) c_f \sin(\pi d), \quad c_{2,\gamma} = 0.
\end{equation}

If $c_f^- = 0$, then Proposition \ref{it:spec-to-time-border} says that
\[
c_\gamma = 2 \Gamma(1-2d) c_f^+, \quad \phi =  \left( \frac{1}{2} - d\right) \pi,
\]
or, through Remark \ref{rmk:cg-c1g-c2g},
\[
c_{1,\gamma} = 2 \Gamma(1-2d) c_f^+ \sin(\pi d), \quad c_{2,\gamma} = - 2 \Gamma(1-2d) c_f^+ \cos(\pi d), \quad \frac{c_{1,\gamma}}{c_{2,\gamma}} = - \tan(\pi d).
\]

Similarly, $c_f^+ = 0$ corresponds to
\[
c_\gamma = 2 \Gamma(1-2d) c_f^-, \quad \phi =  -\left( \frac{1}{2} - d\right) \pi,
\]
or, equivalently,
\[
c_{1,\gamma} = 2 \Gamma(1-2d) c_f^- \sin(\pi d), \quad c_{2,\gamma} = 2 \Gamma(1-2d) c_f^- \cos(\pi d), \quad \frac{c_{1,\gamma}}{c_{2,\gamma}} =  \tan(\pi d).
\]

\section{Auxiliary Lemma} \label{app:sec-auxillary}

We present here results on higher-order behavior of the Fourier series of power-law coefficients that was used in Appendix \ref{app:subsec-time-to-spec-boundary}.

\begin{lemma} \label{lemma:second-order-time-to-spec}
   For $d \in (0,1/2)$, as $\om \to 0^+$,
    \begin{equation}
    \begin{split}
       \sum_{k=1}^\infty \sin(k \om) k^{2d-1} = \om^{-2d} \Gamma(2d) \sin(\pi d) + R_1(\om), \\
       \sum_{k=1}^\infty \cos(k \om) k^{2d-1} = \om^{-2d} \Gamma(2d) \cos(\pi d) + R_2(\om),
    \end{split}
    \end{equation}
    where $R_i(\om) \to c_i \in \RR$, $i=1,2$.
\end{lemma}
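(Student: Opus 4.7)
The plan is to combine the two real sums into the single complex series
\[
\Phi(\omega) \doteq \sum_{k=1}^\infty e^{ik\omega} k^{2d-1}, \quad \omega \in (0, 2\pi),
\]
whose convergence is guaranteed by Abel summation since $k^{2d-1}$ decreases monotonically to zero (recall $2d - 1 < 0$) and the partial sums $\sum_{k=1}^N e^{ik\omega}$ are uniformly bounded. The lemma will then reduce to establishing an expansion of the form
\[
\Phi(\omega) = \Gamma(2d) \omega^{-2d}\bigl(\cos(\pi d) + i \sin(\pi d)\bigr) + c_2 + i c_1 + o(1), \quad \text{as } \omega \to 0^+,
\]
for some $c_1, c_2 \in \RR$; separating real and imaginary parts then delivers the two desired asymptotics.

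The key ingredient is Lindel\"of's expansion of the polylogarithm $\operatorname{Li}_s(z) = \sum_{k=1}^\infty z^k / k^s$ around $z = 1$: for $s \notin \{1, 2, 3, \ldots\}$ and $|\log z| < 2\pi$ (with principal branch),
\[
\operatorname{Li}_s(z) = \Gamma(1-s) (-\log z)^{s-1} + \sum_{k=0}^\infty \zeta(s-k) \frac{(\log z)^k}{k!}.
\]
Setting $s = 1 - 2d \in (0, 1)$ (legitimate since $d \in (0, 1/2)$) and $z = e^{i\omega}$, so that $\log z = i\omega$ and $(-i\omega)^{-2d} = \omega^{-2d} e^{i\pi d}$ for $\omega > 0$ under the principal branch, specializes the identity to
\[
\Phi(\omega) = \Gamma(2d) \omega^{-2d} \bigl(\cos(\pi d) + i \sin(\pi d)\bigr) + \zeta(1 - 2d) + i \zeta(-2d)\, \omega + O(\omega^2).
\]
Taking real and imaginary parts yields the asymptotics of the lemma with explicit constants $c_2 = \zeta(1 - 2d) \in \RR$ and $c_1 = 0$, both real as required.

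The main obstacle is justifying Lindel\"of's expansion in a sufficiently self-contained manner. The cleanest route is to cite it from a standard reference on the polylogarithm. If a more elementary argument is preferred, I would instead apply Euler--Maclaurin summation to $f(x) = e^{ix\omega} x^{2d-1}$ on $[1, \infty)$, comparing $\sum_{k=1}^\infty f(k)$ with the Mellin integral $\int_0^\infty e^{ix\omega} x^{2d-1} dx = \Gamma(2d) (-i\omega)^{-2d}$, which supplies the leading singular term. The correction $\int_1^\infty (\{x\} - 1/2) f'(x) dx$ then splits into an absolutely convergent piece coming from $(2d-1) e^{ix\omega} x^{2d-2}$ (whose $\omega \to 0^+$ limit is a finite constant by dominated convergence, since $|x^{2d-2}|$ is integrable on $[1,\infty)$ for $d<1/2$) and an oscillatory piece from $i\omega e^{ix\omega} x^{2d-1}$, whose contribution is $O(\omega)$ after the change of variables $u = x\omega$ and the use of the Fourier series $\{x\} - 1/2 = -\sum_{n \ge 1} \sin(2\pi n x)/(\pi n)$ term-by-term. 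Either route establishes the claimed convergence $R_i(\omega) \to c_i \in \RR$.
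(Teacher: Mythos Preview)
Your primary route via the Lindel\"of (polylogarithm) expansion is correct and in fact yields more than the paper asks: you identify the limits explicitly as $c_2 = \zeta(1-2d)$ and $c_1 = 0$. The paper's proof takes a more elementary path that is close in spirit to your Euler--Maclaurin alternative but even simpler: it writes the difference $\int_0^\infty e^{iu\omega}u^{2d-1}\,du - \sum_{k\ge1} e^{ik\omega}k^{2d-1}$ as $\sum_{k\ge1}\bigl(\int_{k-1}^k e^{iu\omega}u^{2d-1}\,du - e^{ik\omega}k^{2d-1}\bigr)$, then splits each bracket by adding and subtracting $k^{2d-1}\int_{k-1}^k e^{iu\omega}\,du$. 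The first resulting sum has summands bounded by $Ck^{2d-2}$ uniformly in $\omega$ and converges to a finite constant by dominated convergence; the second equals $\frac{1-e^{-i\omega}-i\omega}{i\omega}\sum_k k^{2d-1}e^{ik\omega}$, which is $O(\omega)\cdot O(\omega^{-2d}) = o(1)$. The trade-off is clear: your approach imports a classical special-function identity and delivers sharper information, while the paper's argument is entirely self-contained and avoids any external machinery (at the cost of not identifying the constants). Your sketch of the Euler--Maclaurin alternative is essentially the same idea as the paper's, though the paper's ad hoc splitting sidesteps the need to invoke the Fourier series of $\{x\}-\tfrac12$ or to justify term-by-term manipulations.
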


\begin{proof}
    Write
    \begin{equation} \label{eq:lemma-aux-1}
    \begin{split}
        \int_0^\infty e^{iu \om} u^{2d-1} du - \sum_{k=1}^\infty e^{ik\om} k^{2d-1} &= \sum_{k=1}^\infty \left( \int_{k-1}^{k} e^{iu \om} u^{2d-1} du -  e^{ik\om} k^{2d-1}  \right) \\
        &=\sum_{k=1}^\infty \left( \int_{k-1}^{k} e^{iu\om} (u^{2d-1} - k^{2d-1})du  + k^{2d-1}  \left( \int_{k-1}^{k} e^{iu\om} du - e^{ik\om}  \right)   \right) .
    \end{split}
    \end{equation}
    Next, note that, by Taylor's expansion,
    \begin{equation}
        \int_{k-1}^{k} (u^{2d-1} - k^{2d-1}) du = k^{2d} \int_{1-1/k}^{1} (z^{2d-1} -1) dz \sim  -k^{2d-2} \frac{2d-1}{2},
    \end{equation}
    which implies,
    from the bounded convergence theorem, as $\om \to 0^+$,
    \begin{equation} \label{eq:lemma-aux-2}
        \sum_{k=1}^\infty \int_{k-1}^{k} e^{iu \om} (u^{2d-1} - k^{2d-1}) du \to - C.
    \end{equation}
    Moreover, 
    \begin{equation}
        \int_{k-1}^{k} e^{iu\om} du - e^{ik\om} = \frac{e^{i\om k } - e^{i\om (k-1)}}{i\om} - e^{i\om k} = e^{i k \om} \frac{1 - e^{- i \om}   -i \om}{i \om}.
    \end{equation}
    Thus,
    \begin{equation} \label{eq:lemma-aux-3}
        \sum_{k=1}^\infty k^{2d-1}  \left( \int_k^{k+1} e^{iu\om} du - e^{ik\om}  \right) = \frac{1 - e^{- i \om}   -i \om}{i \om} \sum_{k=1}^\infty (k^{2d-1} e^{ik \om}) \sim  \om^{-2d+1} \Gamma(2d) e^{i \pi (\frac{1}{2} - d) } = o(1),
    \end{equation}
    where the last asymptotic relation holds from Proposition A.2.1 of \cite{pipiras_taqqu_2017}. 
    By combining \eqref{eq:lemma-aux-1}--\eqref{eq:lemma-aux-3},
    \begin{equation}
    \begin{split}
        \sum_{k=1}^\infty e^{ik\om} k^{2d-1} &=  \int_0^\infty e^{iu \om} u^{2d-1} du  \\
        &\quad-\sum_{k=1}^\infty  \int_{k-1}^{k} e^{iu\om} (u^{2d-1} - k^{2d-1})du  - \sum_{k=1}^\infty k^{2d-1}  \left( \int_{k-1}^{k} e^{iu\om} du - e^{ik\om}  \right)   \\
        &= \om^{-2d} \Gamma(2d) e^{i d \pi}  + R(\om),
    \end{split}
    \end{equation}
    where, as $\om \to 0^+$, $R(\om) \to C - \frac{1}{2d}$ and $C$ is given in \eqref{eq:lemma-aux-2}.
\end{proof}

\end{document}